\DeclareMathOperator{\R}{\mathbb{R}}
\DeclareMathOperator{\N}{\mathbb{N}}
\DeclareMathOperator{\cS}{\mathcal{S}}
\DeclareMathOperator{\cF}{\mathcal{F}}
\DeclareMathOperator{\cG}{\mathcal{G}}
\theoremstyle{plain}
\newtheorem{thm}{Theorem}[section] 
\theoremstyle{definition}
\newtheorem{prop}[thm]{Proposition}
\newtheorem{theo}[thm]{Theorem}
\newtheorem{defi}[thm]{Definition}
\newtheorem{rema}[thm]{Remark}
\newtheorem{lemm}[thm]{Lemma}
\newtheorem{coro}[thm]{Corollary}
\newtheorem{hypo}[thm]{Assumptions}
\begin{document}
\title{On the Long-Time Behaviour of Age and Trait Structured Population Dynamics }
\author{Tristan Roget}
\maketitle

\begin{abstract}
We study the long-time behaviour of a population structured by age and a phenotypic trait under a selection-mutation dynamics. By analysing spectral properties of a family of positive operators on measure spaces, we show the existence of eventually singular stationary  solutions. When the stationary measures are absolutely continuous with a continuous density, we show the convergence of the dynamics to the unique equilibrium. 
\end{abstract}

\maketitle

\section{Preliminaries and Main Results}
\subsection{Introduction}
Our ultimate goal is the understanding of the long-time behaviour  of a population where the individuals differ by their physical age $a\in\R_+$ and some hereditary variable $x\in\cS\subset \R^d$ called trait. The population evolves as follows. An individual with trait $x\in\cS$ and age $a\in\R_+$ has a death rate $D(x,a)+ cN$ where $D$ is the intrinsic death rate, $N$ is the population size and $c>0$ the competition rate. This individual gives birth at rate $B(x,a)$. At every birth, a mutation occurs with probability $p\in \left]0,1\right[$ and the trait of the newborn $y\in \cS$ is choosen according a distribution $k(x,a,y)dy$. Otherwise, the descendant inherits of the trait $x\in\cS$. In his thesis \cite{tran2006modeles}, Tran introduced an individual-based stochastic model to describe such a discrete population. The population is described by a random point measure
\begin{equation}\label{eq:introsto}
Z_t^K=\frac{1}{K}\sum_{i=1}^{N_t^K}\delta_{(x_i(t),a_{i}(t))}
\end{equation}
which evolves as a càdlàg Markov process with values in the set $\mathcal{M}^{+}(\cS\times\R_+)$ of positive finite measures on $\cS\times\R_+$ and each jump corresponds to birth or death of individuals. When the order $K$ of the size of the population goes to infinity such that $Z_0^K$ approximates a deterministic measure $n_0\in \mathcal{M}^{+}(\cS\times\R_+)$,
 it is shown (see \cite{tran2006modeles},\cite{tran2008large}) that the process approximates the unique weak solution $(n_t)_{t\geq 0}\in C(\R_+,\mathcal{M}^{+}(\cS\times\R_+))$ in the sense given by (\ref{eq:weakformpde})  of the partial differential equation
\begin{equation}\label{eq:edpintro}
\begin{cases}
\partial_t n_t(x,a)+\partial_a n_t(x,a)  =-\left(D(x,a)+ c\int_{\cS\times\R_+}n_t(y,\alpha)dyd\alpha\right)n_t(x,a),\\
n_t(x,0) = \cF\left[n_t\right](x),\quad (t,x,a)\in\R_+\times\cS\times\R_+,
\end{cases}
\end{equation}
where for all Borel subset $T$ of $\cS$
\begin{align}\label{eq:deff}
\cF&\left[n_t\right](T)\\
&=(1-p)\int_{T\times\R_+}B(x,\alpha)n_t(dx,d\alpha) +p\int_{T\times\cS\times\R_+}B(y,\alpha)k(y,\alpha,x)n_t(dy,d\alpha)dx.\nonumber
\end{align}
Recently, the well-posedness of measure solutions for a large class of partial differential equations including (\ref{eq:edpintro}) has also been established in \cite{canizo2013measure}, using a deterministic method. At our knowledge, nothing has been done about its long-time behaviour. In \cite{calsina2013steady},  the stationary problem is solved in $L^1(\cS\times\R_+)$ for a similar dynamics with a pure mutational kernel ($p=1$). The present paper is also motivated by \cite{bonnefon2015concentration}. The authors study the long-time behaviour of a selection-mutation dynamics with trait structure (and no age) and $p\in\left]0,1\right[$. They show the existence of stationary measures which can admit dirac masses in some traits and they analyse the long-time behaviour of the solutions when the stationary measure admits a bounded density. 
\\In this paper, we extend these facts to an age and trait structured population. 
We show the existence of non-trivial stationary measures for Equation (\ref{eq:edpintro}) (see Theorem \ref{maintheostatio}) which can be singular. When these measures are absolutely continuous with a continuous density, we show that the solutions of (\ref{eq:edpintro}) converge to the (unique) equilibrium (see Theorem \ref{maintheolongtime}). The method is based on the analysis of the linear dynamics. Indeed, the stationary states of (\ref{eq:edpintro}) are eigenvectors for the direct eigenvalue problem
\begin{equation}
\begin{cases}\label{eq:directeigenintro}
-\partial_a N(x,a) - (D(x,a)+\lambda)N(x,a) = 0 \\
N(x,0)=\cF\left[N\right](x).
\end{cases}
\end{equation}
The solutions of the dual problem 
\begin{equation}\label{eq:dualeigen}
\partial_a \phi(x,a)-(D(x,a)+\lambda)\phi(x,a)+\mathcal{G}\left[\phi\right](x,a)=0,
\end{equation}
where
\begin{equation}\label{eq:defg}
\cG\left[\phi\right](x,a)=B(x,a)\left((1-p)\phi(x,0) +p\int_{\cS}\phi(y,0)k(x,a,y)dy\right),
\end{equation}
give us some useful invariants and allow us to apply a method based on \cite{gwiazda2016generalized},\cite{perthame2006transport} leading to obtain an exponential rate of convergence for the linear dynamics to the stable distribution. 
\\As we will see, the study of the problem (\ref{eq:directeigenintro}) involves to understand spectral properties of a family of positive operators on the space of continuous functions on $\cS$ of the form
\begin{equation}\label{eq:opintrocoville}
(r+J)f(x)=r(x)f(x)+\int_{\cS}K(y,x)f(y)dy
\end{equation}
where $r$ is a continuous and positive function over $\cS$ and $K$ a continuous and non-negative kernel over $\cS$. In \cite{coville2010simple}, Coville finds a useful non-integrability criterion on the parameter $r$ which gives the existence of eigenfunctions associated with the principal eigenvalue of the operator $r+J$. When this criterion fails, he gives examples where there's no eigenfunction. Nonetheless, he shows in \cite{coville2013singular} that there are always principal eigenvectors in the of Radon measures space. Other properties of the operator are studied in \cite{coville2013pulsating}. In Section 1, we give a new, shorter and unified proof of all these results (see Theorem \ref{prop:eigenintegro}). Our approach is based on duality arguments (see Proposition \ref{prop:schaefermain} which is adapted from a result due to Krein-Rutman \cite{krein1948linear}) and allows us to obtain the existence of eigenvectors in a measures space. The criterion for the existence of principal eigenfunctions is also deduced. Our approach allows us to study at the same way the operator  $r+G$ defined by
\begin{equation}\label{eq:opdualintrocoville}
(r+G)f(x)=r(x)f(x)+\int_{\cS}K(x,y)f(y)dy
\end{equation}
which will be used for studying the dual problem (\ref{eq:dualeigen}).

\vspace{0.2cm}
In Section 1.3, we state our main results on the long-time behaviour of the solutions of (\ref{eq:edpintro}).
In Section 2, we study spectral properties of the operators of the form $r+J$, $r+G$ defined by (\ref{eq:opintrocoville}), (\ref{eq:opdualintrocoville}) and of their analogous operators in measure spaces. In Section 3, we apply these results to the study of the long-time behaviour of the linear dynamics. In Section 4, we deduce from the previous sections the proofs of our main results.
\vspace{0.2cm}
\paragraph{\textbf{Notations.}} Let $X$ be a metric space.
\begin{itemize}
\item[•]$C(X)$ (resp. $C^{+}(X)$) represents the sets of continuous functions from $X$ to $\R$ (resp. $\R_+$). $C_b(X)$ (resp. $C^{+}_b(X)$) represents the sets of continuous and bounded functions from $X$ to $\R$ (resp. $\R_+$). $\mathcal{M}(X)$ (resp. $\mathcal{M}^{+}(X)$) represents the set of finite Radon (resp. positive and finite Radon) measures on $X$. $\mathcal{M}_{loc}(X)$ (resp $\mathcal{M}^{+}_{loc}(X)$) represents the set of Radon (resp. positive Radon) measures on $X$. For any metric space $Y$, we denote by $C(X,Y)$ the set of continuous functions from $X$ to $Y$.
\item[•]We denote by $C_b^{1,0,1}=C_b^{1,0,1}(\R_+\times X\times\R_+)$ (resp. $C_c^{1,0,1}$) the set of continuous and bounded (resp. with compact support) functions from $\R_+\times X\times\R_+$ with continuous and bounded derivatives with respect to the first and third variables. We define similarly $C_b^{0,1}=C_b^{0,1}( X\times\R_+)$ and $C_c^{0,1}=C_c^{0,1}(X\times \R_+)$.
\item[•]For any $x\in X$ and $\epsilon\in \R_+^{*}$, we denote by $\mathcal{V}(x,\epsilon)$ (resp. $\overline{\mathcal{V}}(x,\epsilon)$) the open (resp. closed) ball centred in $x$ with radius $\epsilon$.
\end{itemize}
\subsection{Preliminaries}
We first give the main assumptions on the model. Then we recall some facts about topology of measure spaces and we conclude by giving some words about the well-posedness of the dynamics (\ref{eq:introsto}) and (\ref{eq:edpintro}).
\begin{hypo}\label{hypo1}$\text{}$
\begin{equation}\label{hyp:S}\tag{A1}
\cS = \overline{\Omega}, \quad\Omega\subset \R^d \text{ is open, bounded, connected  with Lipschitz boundary},
\end{equation}
\begin{equation*}\label{hyp:B}\tag{A2}
B,D\in C_b^{+}(\cS\times\R_+),\quad D(x,a)\geq \underline{D}>0,\quad k\in C_b^{+}(\cS\times\R_+\times\cS),
\end{equation*}
\begin{equation}\tag{A3}
p\in\left]0,1\right[,\quad  c>0
\end{equation}
and there exists $\epsilon_0>0$ and $I\subset \R_+$ with $\overset{\circ}{I}\neq 0$ such that for all $x\in\cS$ and $y\in \mathcal{V}(x,\epsilon_0)\cap \cS$:
\begin{equation}\label{eq:hypo6}\tag{A4}
I\subset \text{supp}(k(x,y,.))\cap \text{supp}(B(x,.)).
\end{equation}
\end{hypo}
\paragraph{Measure theory.} We recall some classical definitions and facts about topology on measures spaces. The Jordan decomposition theorem ensures that for any  $\mu\in \mathcal{M}(\cS\times\R_+)$, there is $\mu^{+},\mu^{-}\in\mathcal{M}^{+}(\cS\times\R_+)$ mutually singular, such that $\mu=\mu^{+}-\mu^{-}$. The total variation measure is defined by $\vert \mu\vert = \mu^{+}+\mu^{-}$ and the Total Variation norm by
\begin{equation*}
\Vert \mu\Vert_{\text{TV}}=\vert \mu\vert(\cS\times\R_+).
\end{equation*}
The Bounded Lipschitz norm is defined for any $\mu\in\mathcal{M}(\cS\times\R_+)$ by 
\begin{equation*}
\Vert \mu\Vert_{\text{BL}}=\sup\left\lbrace\left\vert\int_{\cS\times\R_+}f(x)\mu(dx)\right\vert: f\in W^{1,\infty}(\cS\times\R_+),\Vert f\Vert_{1,\infty}\leq 1\right\rbrace.
\end{equation*}
where $W^{1,\infty}(\cS\times\R_+)$ is the set of bounded Lipschitz functions from $\cS\times\R_+$ to $\R$ and $\Vert f\Vert_{1,\infty}=\Vert f\Vert_{\infty}+\text{Lip}(f)$ where $\text{Lip}(f)$ is the Lipschitz constant of $f$. We recall  (see \cite{villani2003topics}) that for any sequence $\mu_n\in \mathcal{M}^{+}(\cS\times\R_+)$ and $\mu\in\mathcal{M}^{+}(\cS\times\R_+)$, $\Vert \mu_n -\mu\Vert_{\text{BL}}\underset{n\rightarrow\infty}{\rightarrow} 0$ if and only if for all continuous and bounded function $f$  from $\cS\times\R_+$ to $\R$,
\begin{equation*}
\lim_{n\rightarrow\infty}\int_{\cS\times\R_+}f(x)\mu_n(dx)=\int_{\cS\times\R_+}f(x)\mu(dx),
\end{equation*}
i.e that $\mu_n\rightarrow \mu$ $\text{weakly}^{*}$ in $(C_b(\cS\times\R_+))'$. 
We denote by $C(\R_+,\mathcal{M}^{+}(\cS\times\R_+))$ the space of continuous maps from $\R_+$ to $\mathcal{M}^{+}(\cS\times\R_+)$ with respect to the Bounded Lipschitz norm. 
\paragraph{Well-posedness.} We precise the link between the stochastic process (\ref{eq:introsto}) and the partial differential equation (\ref{eq:edpintro}). We denote $\langle\mu,f\rangle =\int_{\cS\times\R_+} f(x,a)\mu(dx,da)$. Let us consider a sequence $(Z_0^K)_{K\geq 0}$ of $\mathcal{M}^{+}(\cS\times\R_+)$ valued random variables of the form 
\[Z_0^K=\frac{1}{K}\sum_{i=1}^{N_t^K}\delta_{(x_i,a_i)}.\] 
For each $K\in\mathbb{N}^{*}$, let $(Z_t^K)_{t\geq 0}$ be defined as the càdlàg measure-valued process started at $Z_0^K$ with infinitesimal generator $L^K$ given, for any $f\in C_b^{0,1}$ and $\mu\in\mathcal{M}^{+}(\cS\times\R_+)$ by
\begin{align}\label{eq:generateur}
L^K F_f&(\mu)=\int_{\cS\times\R_+}\partial_a f(x,a)F'(\langle \mu,f\rangle)\mu(dx,da)\\
&+K\int_{\cS\times\R_+}\left\lbrace(F(\langle \mu +\frac{\delta_{(x,0)}}{K},f\rangle)-F(\langle \mu,f\rangle))(1-p)B(x,a)\right.\nonumber\\
&\left.+\left(\int_{\cS}(F(\langle \mu +\frac{\delta_{(y,0)}}{K},f\rangle)-F(\langle \mu,f\rangle))B(x,a)pk(x,a,y)dy\right)\right.\nonumber\\
&\left. +(F(\langle \mu -\frac{\delta_{(x,a)}}{K},f\rangle)-F(\langle \mu,f\rangle))\left(D(x,a)+c\langle\mu,1\rangle\right)\right\rbrace\mu(dx,da)\nonumber
\end{align}
where $F_f(\mu):=F(\langle \mu,f\rangle)$ (we note that the set of functions of the form $F_f$ is sufficient to characterise the infinitesimal generator, as it is proved in \cite{dawson1993measure}).
The following proposition allows to obtain the solutions of (\ref{eq:edpintro}) as a large population limit of the stochastic process $Z^K$. We refer to \cite{tran2006modeles} for the proof.
\begin{prop} \label{prop:largepop}
Assume Assumptions \ref{hypo1} and that $Z_0^K$ converges in law to $n_0\in\mathcal{M}^{+}(\cS\times\R_+)$ as $K\rightarrow\infty$. Then, the sequence of processes $(Z^K)_{K\geq 0}$ converges in law (on finite time interval) to the unique weak solution $(n_t)_{t\geq 0}\in C(\R_+,\mathcal{M}^{+}(\cS\times\R_+))$ of (\ref{eq:edpintro}) which satisfies for all $f\in  C_b^{1,0,1}$ and $t\in\R_+$,
\begin{align}\label{eq:weakformpde}
\int_{\cS\times\R_+}f&(t,x,a) n_t(dx,da) \nonumber\\
& =\int_{\cS\times\R_+} f(0,x,a)n_0(dx,da)+\int_{0}^{t}\int_{\cS\times\R_+}\left(\partial_s f(s,x,a)+\partial_a f(s,x,a)\right.\nonumber\\
&\left.-\left(D(x,a)+c n_s(\cS\times\R_+) \right)f(s,x,a)+\mathcal{G}\left[f(s,.)\right](x,a)\right)n_s(dx,da)ds
\end{align}
where $\mathcal{G}$ has been defined in (\ref{eq:defg}).
\end{prop}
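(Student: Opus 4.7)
The plan is to follow the classical three-step martingale problem strategy for the convergence of measure-valued branching-type processes. First, I would derive a martingale decomposition: applying $L^K$ to $F_f = \langle\cdot,f\rangle$ (and then, by a standard argument, to time-dependent test functions $f\in C_b^{1,0,1}$) yields
\begin{equation*}
\langle Z_t^K,f(t,\cdot)\rangle = \langle Z_0^K,f(0,\cdot)\rangle + \int_0^t \Phi(s,Z_s^K)\,ds + M_t^{K,f},
\end{equation*}
where $\Phi(s,\mu)$ is exactly the drift appearing on the right-hand side of (\ref{eq:weakformpde}) and $M^{K,f}$ is a càdlàg square-integrable martingale whose predictable quadratic variation is of order $1/K$ (the jumps of $Z^K$ are of size $1/K$). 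The quantitative bound is
\begin{equation*}
\langle M^{K,f}\rangle_t \leq \frac{C_f}{K}\int_0^t\left(\langle Z_s^K,1\rangle+\langle Z_s^K,1\rangle^2\right)ds,
\end{equation*}
with $C_f$ depending only on $\Vert f\Vert_\infty$, $\Vert B\Vert_\infty$ and $\Vert D\Vert_\infty$.

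Next comes tightness. The crucial ingredient is a uniform moment bound on $\langle Z_t^K,1\rangle$. Taking $f\equiv 1$ in the martingale decomposition and using Assumption~\ref{hypo1} one gets
\begin{equation*}
\frac{d}{dt}\E[\langle Z_t^K,1\rangle] \leq \Vert B\Vert_\infty \E[\langle Z_t^K,1\rangle] - c\,\E[\langle Z_t^K,1\rangle^2],
\end{equation*}
which, combined with an analogous Itô-type estimate for the second moment, gives $\sup_K\E[\sup_{t\leq T}\langle Z_t^K,1\rangle^2]<\infty$ (the competition term prevents blow-up). This, together with the $1/K$ bound on the bracket, allows one to apply the Aldous--Rebolledo criterion to obtain tightness of $(Z^K)_{K\geq 0}$ on $D(\R_+,\mathcal{M}^+(\cS\times\R_+))$ endowed with the weak$^*$ topology (equivalently, the $\Vert\cdot\Vert_{\text{BL}}$-topology on bounded sets).

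Identification of the limit points is then routine: along a converging subsequence, the martingale $M^{K,f}$ vanishes in $L^2$ (its bracket tends to $0$), while each term of the drift is continuous in $\mu$ for the weak$^*$ topology on bounded sets thanks to the continuity and boundedness of $B,D,k$; hence any limit $n$ satisfies (\ref{eq:weakformpde}) and belongs to $C(\R_+,\mathcal{M}^+(\cS\times\R_+))$ by the continuity of $t\mapsto\langle n_t,f\rangle$ inherited from the drift formulation.

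The main obstacle, as usual in this setting, is \emph{uniqueness} of the measure-valued solution of (\ref{eq:weakformpde}), since this is what upgrades sub-sequential convergence in law to full convergence and links the limit to a deterministic object. I would argue as follows. Given two solutions $(n_t^1),(n_t^2)$ with the same initial datum, write the characteristics-based mild form obtained by choosing, for each fixed $\varphi\in W^{1,\infty}(\cS\times\R_+)$ with $\Vert\varphi\Vert_{1,\infty}\leq 1$, the test function $f(s,x,a)=\varphi(x,a-s+t)$ (suitably truncated) on $[0,t]$; this turns the transport term into a boundary contribution and gives a closed Volterra-type inequality for $\Vert n_t^1-n_t^2\Vert_{\text{BL}}$ involving the difference of the nonlinear drift. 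Using the uniform bound $\sup_{t\leq T}\langle n_t^i,1\rangle\leq C_T$ (which follows from the same moment argument applied at the PDE level) and the Lipschitz continuity of $\mu\mapsto \Phi(\cdot,\mu)$ in $\Vert\cdot\Vert_{\text{BL}}$ on bounded subsets, a Gronwall argument yields $\Vert n_t^1-n_t^2\Vert_{\text{BL}}=0$. Uniqueness combined with tightness and identification completes the proof.
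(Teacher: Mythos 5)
The paper does not actually prove this proposition: it states it and refers to \cite{tran2006modeles}. Your outline --- martingale decomposition of $\langle Z^K_t,f\rangle$ with an $O(1/K)$ predictable bracket, moment bounds exploiting the logistic death term, Aldous--Rebolledo tightness in $D(\R_+,\mathcal{M}^+(\cS\times\R_+))$, identification of limit points, and uniqueness of the limiting measure solution --- is precisely the strategy of that reference (and of the Fournier--M\'el\'eard line of work it builds on), so at the level of method there is nothing to compare. One small caveat: the bound $\sup_K\mathbb{E}\bigl[\sup_{t\leq T}\langle Z_t^K,1\rangle^2\bigr]<\infty$ cannot follow from convergence in law of $Z_0^K$ alone; one needs a uniform initial moment hypothesis (the reference assumes something like $\sup_K\mathbb{E}[\langle Z_0^K,1\rangle^3]<\infty$), which the proposition as stated here silently inherits.

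The one step in your proposal that would genuinely fail as written is the uniqueness argument. You close the Gronwall inequality in the $\Vert\cdot\Vert_{\mathrm{BL}}$ norm by invoking Lipschitz continuity of $\mu\mapsto\Phi(\cdot,\mu)$ in that norm, which requires the maps $\varphi\mapsto D\varphi$ and $\varphi\mapsto\mathcal{G}[\varphi]$ to send the $\Vert\cdot\Vert_{1,\infty}$ unit ball into a bounded set of Lipschitz functions; this forces $B$, $D$ and $k$ to be Lipschitz. Under Assumptions \ref{hypo1} alone these coefficients are merely continuous and bounded --- the $W^{1,\infty}$ regularity is only imposed later, as the separate Assumptions \ref{hypo2}, and is deliberately not part of the hypotheses of this proposition. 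The standard repair, and what the cited reference does, is to run the same characteristics/Duhamel argument in the total-variation dual pairing $\sup_{\Vert f\Vert_\infty\leq 1}\vert\langle n_t^1-n_t^2,f\rangle\vert$: composition with the flow $(x,a)\mapsto(x,a+t)$ and multiplication by $B$, $D$, $k$ preserve the sup norm, so only boundedness of the coefficients is needed and the Gronwall inequality closes. With that substitution (and the initial moment hypothesis made explicit) your proof is complete and coincides with the one the paper delegates to \cite{tran2006modeles}.
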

\subsection{Main Results}
Let us introduce some notations. For any $(\lambda,x,y,a)\in \left]-\underline{D},+\infty\right[\times\cS^2\times\R_+$, we define:
\begin{equation}\label{defifonction}
\begin{cases}
R_{\lambda}(x,a)=\exp\left(-\int_{0}^{a}D(x,\alpha)d\alpha)-\lambda a\right),\\
r_{\lambda}(x)=(1-p)\int_{\R_+}B(x,a)R_{\lambda}(x,a)da,\\
K_{\lambda}(x,y)=p\int_{\R_+}B(x,a)k(x,a,y)R_{\lambda}(x,a)da.
\end{cases}
\end{equation}
For any $\lambda\in\left]-\underline{D},+\infty\right[$, we define the linear operator $\tilde{r}_\lambda + \tilde{J}_{\lambda}:\mathcal{M}(\cS)\rightarrow\mathcal{M}(\cS)$ by  
\begin{equation}\label{eq:defopmeasure}
(\tilde{r}_\lambda+\tilde{J}_\lambda)\mu =r_\lambda(x)\mu + \left(\int_{\cS}K_\lambda(y,x)\mu(dy)\right)dx
\end{equation}
and we denote by $\rho(\tilde{r}_\lambda+\tilde{J}_\lambda)$ its spectral radius.
We now give the main results of the paper. The first one shows the existence of stationary states for the dynamics (\ref{eq:weakformpde}), under some assumption on the spectral radius of the operators introduced above (similarly as in \cite{calsina2013steady}). This assumption is related to the supercriticality of the associated linear dynamics. 
\begin{theo}\label{maintheostatio}
Assume Assumptions \ref{hypo1} and $\rho(\tilde{r}_0 +\tilde{J}_0)>1$. There exists a non-zero solution $\overline{n}\in \mathcal{M}^{+}(\cS\times\R_+)$ of:
\begin{equation}\label{eq:statioproblem}
\forall f\in C_b^{0,1},\quad \int_{\cS\times\R_+}\left(\partial_a f - \left(D+c\int_{\cS\times\R_+}\overline{n}\right)f+\mathcal{G}\left[f\right]\right)(x,a)\overline{n}(dx,da)=0,
\end{equation}
which is given by
\begin{equation*}
\overline{n}(x,a)=\mu_{\lambda^{*}}(dx)R_{\lambda^{*}}(x,a)da
\end{equation*}
where $\lambda^{*}>0$ is solution of the equation $\rho(\tilde{r}_{\lambda^{*}}+\tilde{J}_{\lambda^{*}})=1$ and $\mu_{\lambda^{*}}\in \mathcal{M}^{+}(\cS)$ is an eigenvector of $\tilde{r}_{\lambda^{*}}+\tilde{J}_{\lambda^{*}}$ associated with the eigenvalue $\rho(\tilde{r}_{\lambda^{*}}+\tilde{J}_{\lambda^{*}})=1$. 
\end{theo}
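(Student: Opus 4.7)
The plan is to use separation of variables in the age direction. Any candidate of the form $\overline{n}(dx,da) = \mu(dx) \otimes g(x,a)\,da$ satisfying (\ref{eq:statioproblem}) must have $g(x,\cdot)$ solving the ODE $\partial_a g = -(D + c\bar N)\,g$ with $\bar N = \overline{n}(\cS\times\R_+)$, hence $g(x,a) = R_{\lambda}(x,a)$ with $\lambda := c\bar N$ after normalising $g(x,0) = 1$. The renewal boundary condition $n(x,0) = \cF[n](x)$, read in the stationary regime, translates exactly into the requirement that $\mu$ be a fixed point of $\tilde r_\lambda + \tilde J_\lambda$ on $\mathcal{M}^{+}(\cS)$, i.e.\ an eigenmeasure with eigenvalue $1$. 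The proof therefore reduces to producing a pair $(\lambda^*, \mu_{\lambda^*})$ with $\rho(\tilde r_{\lambda^*}+\tilde J_{\lambda^*}) = 1$, together with a self-consistency condition $c\bar N = \lambda^*$ tying the scaling of $\mu_{\lambda^*}$ to the competitive death rate.

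To produce $\lambda^*$, I would study the map $\Lambda : \lambda \mapsto \rho(\tilde r_\lambda + \tilde J_\lambda)$ on $\left]-\underline{D},+\infty\right[$. Since $R_\lambda(x,a)$ is continuous and strictly decreasing in $\lambda$, so are the functions $r_\lambda(x)$ and the kernel $K_\lambda(x,y)$, uniformly in $(x,y)\in\cS^2$ thanks to (\ref{hyp:S}) and the boundedness of $B, D, k$. Standard perturbation estimates for positive operators give continuity of $\Lambda$, and positivity together with the Krein-Rutman style eigenmeasure supplied by Theorem \ref{prop:eigenintegro} yields strict monotonicity. By assumption $\Lambda(0) > 1$; as $\lambda \to +\infty$, $R_\lambda \to 0$ exponentially in $a$, so $\Vert r_\lambda\Vert_\infty + \sup_x \int_{\cS} K_\lambda(x,y)\,dy \to 0$, forcing $\Lambda(\lambda) \to 0$. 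The intermediate value theorem then delivers $\lambda^* > 0$ with $\Lambda(\lambda^*) = 1$.

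With $\lambda^*$ in hand, Theorem \ref{prop:eigenintegro} furnishes a non-zero $\mu_{\lambda^*} \in \mathcal{M}^{+}(\cS)$ satisfying $(\tilde r_{\lambda^*} + \tilde J_{\lambda^*})\mu_{\lambda^*} = \mu_{\lambda^*}$. To adjust the scaling, consider $\overline{n}_\alpha := \alpha\,\mu_{\lambda^*}(dx) \otimes R_{\lambda^*}(x,a)\,da$ for $\alpha > 0$. Its total mass $\bar N(\alpha) = \alpha \int_{\cS}\!\left(\int_0^\infty R_{\lambda^*}(x,a)\,da\right)\mu_{\lambda^*}(dx)$ is a strictly positive linear function of $\alpha$ (the inner integral is bounded below since $D$ is bounded), so a unique $\alpha > 0$ realises $c\bar N(\alpha) = \lambda^*$.

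It then remains to verify (\ref{eq:statioproblem}) for this $\overline{n}$. Plugging the candidate into the weak formulation and integrating by parts in $a$ using $R_{\lambda^*}(x,0)=1$, $R_{\lambda^*}(x,\infty)=0$ and $\partial_a R_{\lambda^*} = -(D+\lambda^*)R_{\lambda^*}$, the contribution of $\partial_a f - (D + c\bar N)f$ collapses (via $\lambda^* = c\bar N$) to the boundary term $-f(x,0)$, while the $\cG[f]$ part integrates against $R_{\lambda^*}$ to $r_{\lambda^*}(x)f(x,0) + \int_{\cS} K_{\lambda^*}(x,y)f(y,0)\,dy$. Testing the eigenvector identity against $g := f(\cdot,0) \in C(\cS)$ then cancels everything. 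The main obstacle of the whole argument is not this final verification but the continuity and strict monotonicity of $\Lambda$: the operators $\tilde r_\lambda + \tilde J_\lambda$ are generally non-compact and their principal eigenmeasures may be singular, which is precisely why the measure-valued form of Theorem \ref{prop:eigenintegro} is essential.
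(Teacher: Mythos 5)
Your proposal follows essentially the same route as the paper: the paper's proof of Theorem \ref{maintheostatio} is only a few lines precisely because it delegates the structure of the eigenmeasures to Proposition \ref{theo:main}, the existence of $\lambda^{*}$ to Proposition \ref{prop:malthus}, and the eigenmeasure itself to Lemma \ref{prop:eigenintegroparam}, which together carry out exactly the separation of variables, intermediate-value and rescaling steps you describe. The one place your sketch undersells the difficulty is the continuity of $\lambda\mapsto\rho(\tilde{r}_\lambda+\tilde{J}_\lambda)$: the spectral radius is in general only upper semi-continuous under norm perturbations, and the paper's Proposition \ref{prop:malthus} must split into the case $\rho>\overline{r}_\lambda$ (isolated simple eigenvalue, treated by Kato perturbation theory) and the case $\rho=\overline{r}_\lambda$ (where $\rho$ coincides with the essential spectral radius $\overline{r}_\lambda$, which is continuous in $\lambda$ by Lemma \ref{lemm:tech1}) --- though you do correctly identify this as the main obstacle of the argument.
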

Let us now introduce an additional regularity assumption which allows us to obtain the continuity of the solutions with respect to the initial conditions (see Lemma \ref{lemm:canizo}).
\begin{hypo}\label{hypo2}$B,D\in W^{1,\infty}(\cS\times\R_+)$ and $k\in W^{1,\infty}(\R_+\times\cS\times\R_+)$.
\end{hypo}
We now focus on the case where there exists a stationary measure $\overline{n}$ which admits a continuous density (we keep the same notation $\overline{n}$ for the density).
\begin{theo}\label{maintheolongtime}
Assume Assumptions \ref{hypo1}, \ref{hypo2}. Assume that $\rho(\tilde{r}_0 +\tilde{J}_0)>1$ and that there exists a solution  $\overline{n}\in C(\cS, L^1(\R_+))$ of (\ref{eq:statioproblem}). Assume that there exist $\underline{B},\underline{k}>0$ such that $B\geq \underline{B}$ and $k\geq\underline{k}$. Let $(n_t)_{t\geq 0}\in C(\R_+, \mathcal{M}^{+}(\cS\times\R_+))$ be the solution of (\ref{eq:weakformpde}) started at $n_0\in \mathcal{M}^{+}(\cS\times\R_+)\setminus \lbrace 0\rbrace$. Then we have
\begin{equation*}
\lim_{t\rightarrow\infty}\Vert n_t -\overline{n}\Vert_{\text{TV}}=0
\end{equation*}
and $\overline{n}$ is the unique stationary measure.
\end{theo}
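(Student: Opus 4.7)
The plan is to reduce the nonlinear PDE to a linear one via a Malthusian renormalization, study the long-time behaviour of the renormalized linear semigroup through the spectral theory of Section 2 combined with a generalized relative entropy argument, and then invert the renormalization to recover the convergence of $n_t$. Concretely, let $N(t):=n_t(\cS\times\R_+)$ and $M(t):=c\int_0^t N(s)\,ds$. Testing the weak formulation (\ref{eq:weakformpde}) with time-dependent test functions of the form $e^{M(t)}f(x,a)$ shows that $v_t:=e^{M(t)}n_t$ is the unique measure-valued solution of the \emph{linear} renewal equation
\[
\partial_t v_t + \partial_a v_t = -D\,v_t,\qquad v_t(\cdot,0)=\cF[v_t],\qquad v_0=n_0,
\]
because the competition term $cN(t)n_t$ is exactly absorbed by the factor $e^{M(t)}$ while linearity of $\cF$ preserves the boundary condition. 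The entire nonlinearity is thus encoded in the scalar factor $e^{-M(t)}$.

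To analyse $v_t$, I would apply the spectral results of Section 2 to the family $\tilde{r}_\lambda+\tilde{J}_\lambda$. Strict monotonicity in $\lambda$ combined with $\rho(\tilde{r}_0+\tilde{J}_0)>1$ produces a unique $\lambda^*>0$ with $\rho(\tilde{r}_{\lambda^*}+\tilde{J}_{\lambda^*})=1$. The continuity assumption on $\overline{n}$ ensures that the associated direct eigenvector has a continuous representative $\overline{N}(x,a):=\mu_{\lambda^*}(x)R_{\lambda^*}(x,a)$, while the dual problem (\ref{eq:dualeigen}) furnishes a strictly positive bounded eigenvector $\phi^*\in C_b^{+}(\cS\times\R_+)$. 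The lower bounds $B\geq\underline{B}$, $k\geq\underline{k}$, combined with (\ref{eq:hypo6}), yield a Doeblin-type minorization at the boundary $a=0$ after one renewal period, which provides the irreducibility required for long-time contraction. Conservation of the linear functional $t\mapsto \int_{\cS\times\R_+}\phi^*(x,a)\,e^{-\lambda^*t}\,v_t(dx,da)$ pins down the limit, and a generalized relative entropy / semigroup contraction argument in the spirit of \cite{gwiazda2016generalized,perthame2006transport} yields
\[
\bigl\Vert e^{-\lambda^* t}v_t - c_0\,\overline{N}\bigr\Vert_{\text{TV}}\xrightarrow[t\to\infty]{} 0,\qquad c_0:=\frac{\langle n_0,\phi^*\rangle}{\langle \overline{N},\phi^*\rangle}.
\]

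Inverting the renormalization, testing the linear equation against $f\equiv 1$ gives $N(t)e^{M(t)}=v_t(\cS\times\R_+)\sim c_0\,\overline{N}(\cS\times\R_+)\,e^{\lambda^* t}$. Since $(e^{M(t)})'=cN(t)e^{M(t)}$, integration yields $e^{M(t)}\sim (cc_0\,\overline{N}(\cS\times\R_+)/\lambda^*)\,e^{\lambda^* t}$, so $N(t)\to\lambda^*/c$ and
\[
n_t=e^{-M(t)}v_t\xrightarrow[t\to\infty]{} \frac{\lambda^*}{c\,\overline{N}(\cS\times\R_+)}\,\overline{N}\qquad\text{in total variation},
\]
a multiple of $\overline{N}$ which Theorem \ref{maintheostatio} identifies with $\overline{n}$. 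For uniqueness, any other nonzero stationary solution $\widetilde{n}$ produces via the same transform the linear orbit $t\mapsto e^{cN^*t}\widetilde{n}$ with $N^*=\widetilde{n}(\cS\times\R_+)$ constant, which forces $\widetilde{n}$ to be a positive eigenvector of the linear semigroup with growth rate $cN^*$; simplicity of the principal eigenvalue from Section 2 imposes $cN^*=\lambda^*$ and proportionality to $\overline{N}$, and the total-mass condition then gives $\widetilde{n}=\overline{n}$.

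The main difficulty lies in obtaining the total variation convergence of the linear step for a possibly singular initial datum $n_0\in\mathcal{M}^{+}(\cS\times\R_+)$. The generalized relative entropy principle natively delivers convergence in $\phi^*$-weighted $L^1$ norm for absolutely continuous data, and upgrading to TV against measure initial data requires two ingredients: the continuity of $\overline{N}$, which is precisely what the assumption $\overline{n}\in C(\cS,L^1(\R_+))$ together with Assumption \ref{hypo2} provides, and the regularization afforded by the mutational component of $\cF$, which, thanks to $p>0$ and $k\geq\underline{k}$, produces an absolutely continuous part with a continuous density at the boundary $a=0$ after one renewal period. Tracking this regularization carefully and combining it with the Doeblin argument above is where the bulk of the technical work will be concentrated.
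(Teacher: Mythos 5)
Your proposal follows essentially the same route as the paper: the renormalization $v_t=\exp\bigl(c\int_0^t n_s(\cS\times\R_+)\,ds\bigr)n_t$ reducing the problem to the linear equation, the eigenelements $(\lambda^{*},N,\phi)$ from the spectral analysis of Sections 2--3 with the minorization $pB\,k\,\phi(\cdot,0)\geq\underline{\eta}\,\phi$ supplied by $B\geq\underline{B}$ and $k\geq\underline{k}$, the $\phi$-weighted contraction in the spirit of \cite{perthame2006transport},\cite{gwiazda2016generalized} extended from regular to measure initial data, and the identification of the limit $n_t(\cS\times\R_+)\rightarrow\lambda^{*}/c$. The only cosmetic difference is that you integrate $(e^{M(t)})'=cN(t)e^{M(t)}$ to extract the asymptotics of the total mass, whereas the paper writes the logistic-type ODE for $\rho(t)=n_t(\cS\times\R_+)$ with a vanishing perturbation $\mathcal{D}(t)$; the two computations are equivalent.
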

\begin{rema}
Let us give a sufficient condition to obtain $\overline{n}\in C(\cS,L^1(\R_+))$. By Theorem \ref{maintheostatio} the trait marginal $\mu_{\lambda^{*}}(dx)$ of $\overline{n}$ is an eigenvector of $\tilde{r}_{\lambda^{*}}+\tilde{J}_{\lambda^{*}}$ associated with the eigenvalue $\rho(\tilde{r}_{\lambda^{*}}+\tilde{J}_{\lambda^{*}})=1$. By Lemma \ref{prop:eigenintegroparam} (iii) and (ii), we obtain that if $\text{Leb}(\Sigma_{\lambda^{*}})>0$ or if $\text{Leb}(\Sigma_{\lambda^{*}})=0$ and $\frac{1}{\overline{r}_{\lambda^{*}}-r_{\lambda^{*}}}\notin L^1(\cS)$ the measure $\mu_{\lambda^{*}}(dx)$ is absolutely continuous with a continuous density, and that $\overline{n}\in C(\cS,L^1(\R_+))$.
\end{rema}
\begin{rema}
If $\overline{n}$ admits a non-trivial singular part, Theorem 1.5 is false since any $L^1(\cS\times\R_+)$-solution of (\ref{eq:weakformpde}) can't converge to a singular measure in total variation norm. 
\end{rema}
\section{Spectral Properties of some Positive Operators}
\subsection{Position of the Problem and Results}
Let us consider a subset $\cS$ of $\R^d$ which satisfies Assumption (\ref{hyp:S}).
We analyse the spectral properties of the operators $r+J,r+G:C(\cS)\rightarrow C(\cS)$ defined respectively by (\ref{eq:opintrocoville}), (\ref{eq:opdualintrocoville}) and of their analogous operators on measure spaces $\tilde{r}+\tilde{J},\tilde{r}+\tilde{G}:\mathcal{M}(\cS)\rightarrow\mathcal{M}(\cS)$ defined similarly by
\begin{align*}
(\tilde{r}+\tilde{J})\mu =r(x)\mu + \left(\int_{\cS}K(y,x)\mu(dy)\right)dx,\\
(\tilde{r}+\tilde{G})\mu =r(x)\mu + \left(\int_{\cS}K(x,y)\mu(dy)\right)dx.
\end{align*}
\begin{hypo}\label{hypo4}
\begin{equation}\label{eq:hypo1}\tag{A5}
r\in C(\cS)\text{ is positive},
\end{equation}
\begin{equation}\label{eq:hypo2}\tag{A6}
K\in C(\cS\times\cS)\text{ is non-negative, }
\end{equation}
\begin{equation}\label{eq:hypo3}\tag{A7}
\exists \epsilon_0,c_0>0,\quad\inf_{x\in\cS}\left(\inf_{y\in \mathcal{V}(x,\epsilon_0)\cap \cS} K(x,y)\right)>c_0.
\end{equation}
\end{hypo}
\begin{rema}
Assume Assumption (\ref{eq:hypo3}), then we have
\begin{equation}
\inf_{x\in\cS}\left(\inf_{y\in\mathcal{V}(x,\epsilon_0)\cap \cS}K(y,x)\right)>c_0.
\end{equation}
Indeed, let $x\in\cS$ and $y\in\mathcal{V}(x,\epsilon_0)\cap \cS$. Then $x\in\mathcal{V}(y,\epsilon_0)$ and $K(y,x)>c_0$.
\end{rema}
The following result deals with the spectral properties of the operators introduced above. We denote by $\rho(r+J)$ the spectral radius of the operator $r+J$ (and similarly for $r+G$). The reader can refer to Appendix A for terminology and recalls about spectral theory. The following theorem is proved in Section 2.2.
\begin{theo}\label{prop:eigenintegro}
Assume Assumptions \ref{hypo4}. There exists $\mu\in \mathcal{M}^{+}(\cS)$ such that
\begin{equation*}
\rho(r+G)\mu =(\tilde{r}+\tilde{J})\mu
\end{equation*}
which satisfies $\mu(A)>0$ for any Borel subset $A$ of $\cS$ such that $\text{Leb}(A)>0$ ($\text{Leb}$ denotes the Lebesgue measure on $\cS$). Moreover, let us denote $\overline{r}=\sup_{x\in\cS}r(x)$ and $\Sigma=\lbrace x\in\cS:r(x)=\overline{r}\rbrace$. Then we have:
\begin{itemize}
\item[(i)]$\overline{r}\leq \rho(r+G)$.
\item[(ii)]$\overline{r}<\rho(r+G)$ if and only if there exists $u\in C(\cS)$, $u>0$ such that $\mu =u(x)dx$. In this case, $\rho(r+G)$ is an eigenvalue of $r+J$ with algebraic multiplicity equals to one, associated with the eigenfunction $u$.
\item[(iii)]If $\text{Leb}(\Sigma)> 0$ or if, $\text{Leb}(\Sigma)=0$ and $\frac{1}{\overline{r}-r}\notin L^1(\cS)$, we have $\overline{r}<\rho(r+G)$.
\item[(iv)]If $\rho(r+G)=\overline{r}$, we have $\mu=\mu^s +h(x)dx$ with $h\in L^1(\cS)$ and either $\mu^s=0$, or $\mu^s\neq 0$ and $\text{supp}(\mu^s)\subset \Sigma$.
\item[(v)]$\rho(r+J)=\rho(r+G)=\rho(\tilde{r}+\tilde{J})=\rho(\tilde{r}+\tilde{G})$.
\end{itemize}
Moreover, the same results are true exchanging $J$ and $G$, $\tilde{J}$ and $\tilde{G}$.
\end{theo}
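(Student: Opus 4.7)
The plan is to exploit the $C(\cS)$--$\mathcal{M}(\cS)$ duality $\langle\mu,f\rangle=\int_{\cS}f\,d\mu$. A Fubini computation gives $(r+G)^{*}=\tilde{r}+\tilde{J}$ and $(r+J)^{*}=\tilde{r}+\tilde{G}$; combined with the standard identity $\sigma(T)=\sigma(T^{*})$ for bounded operators on Banach spaces, and the fact that the compact integral operators $G$ and $J$ share their non-zero eigenvalues (their kernels are transposes of one another), this yields (v). From there the entire theorem reduces to analysing the single measure identity
\begin{equation*}
(\rho(r+G)-r(x))\,\mu(dx)=K_{\mu}(x)\,dx,\qquad K_{\mu}(x):=\int_{\cS}K(y,x)\,\mu(dy),
\end{equation*}
which is just the eigenvalue equation $(\tilde{r}+\tilde{J})\mu=\rho(r+G)\mu$ rewritten, together with the observation that $K_{\mu}$ is continuous (dominated convergence) and, as soon as $\mu\neq 0$, strictly positive on every $\epsilon_{0}/2$-neighbourhood of a point of $\text{supp}(\mu)$ by assumption (\ref{eq:hypo3}).

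Existence of $\mu$ itself would come from Proposition~\ref{prop:schaefermain}, the Krein-Rutman-type duality result of Section~2, applied to the positive operator $r+G$ on $C(\cS)$: although $r+G$ is not compact (because $M_{r}$ is not), the integral part $G$ is compact (continuous kernel on the compact set $\cS$), and this is enough to extract a positive eigenvector in the dual space $\mathcal{M}(\cS)$. The bound $\overline{r}\leq\rho(r+G)$ in (i) is a simple comparison with the multiplication operator: iterating $(r+G)^{n}f\geq M_{r}^{n}f$ for $f\in C^{+}(\cS)$ and invoking Gelfand's formula gives the inequality.

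The remaining items (ii)--(iv) come from a case split on the displayed identity. If $\rho(r+G)>\overline{r}$, then $\rho(r+G)-r(x)>0$ on $\cS$ and the identity forces $\mu=u(x)dx$ with $u(x)=K_{\mu}(x)/(\rho(r+G)-r(x))\in C(\cS)$; a chain-of-balls argument invoking (\ref{eq:hypo3}) and the connectedness of $\cS$ propagates $u>0$ from $\text{supp}(\mu)$ to all of $\cS$, so $u$ is a strictly positive continuous eigenfunction of $r+J$. Algebraic simplicity follows from the standard Krein-Rutman argument exploiting the strict positivity of $u$. The converse in (ii) is immediate because then $\rho(r+G)-r(x)=K_{\mu}(x)/u(x)$ is a positive continuous function on the compact set $\cS$, hence bounded below by a positive constant. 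If $\rho(r+G)=\overline{r}$, restricting the identity to $\cS\setminus\Sigma$ determines an absolutely continuous part with density $h(x)=K_{\mu}(x)/(\overline{r}-r(x))$, while any singular part is confined to $\Sigma$, yielding (iv). For (iii), if $\text{Leb}(\Sigma)>0$ then the identity forces $K_{\mu}\equiv 0$ Leb-a.e.\ on $\Sigma$, hence by continuity at a point of density $x^{*}$ of $\Sigma$ one has $K_{\mu}(x^{*})=0$; combined with (\ref{eq:hypo3}) this forces $\mu$ to vanish on a neighbourhood of $x^{*}$, and a covering of $\cS$ by such neighbourhoods using connectedness then yields $\mu\equiv 0$, a contradiction. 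If instead $\text{Leb}(\Sigma)=0$ and $1/(\overline{r}-r)\notin L^{1}(\cS)$, then $K_{\mu}$ is bounded below by a positive constant (compactness and continuity), so the density $K_{\mu}/(\overline{r}-r)$ fails to be integrable, contradicting the finiteness of $\mu$. The positivity of $\mu$ on every Lebesgue-positive Borel set is a by-product: in case $\rho(r+G)>\overline{r}$ the density $u$ is strictly positive, and in case $\rho(r+G)=\overline{r}$ one has $\text{Leb}(\Sigma)=0$ by (iii), so the density $h$ accounts for all the Lebesgue mass.

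The main obstacle is the application of Proposition~\ref{prop:schaefermain}: the non-compactness of $r+G$ rules out a direct appeal to classical Krein-Rutman on $C(\cS)$, forcing one to pass to the dual space $\mathcal{M}(\cS)$ to produce the eigenvector. The secondary delicate point is the subcase $\rho(r+G)=\overline{r}$, where the finite-mass constraint and (\ref{eq:hypo3}) must be used carefully to exclude the degenerate behaviour in (iii). The symmetric statement obtained by swapping $J\leftrightarrow G$ and $\tilde{J}\leftrightarrow\tilde{G}$ follows from the same argument with the roles of $J$ and $G$ exchanged; the remark preceding the theorem ensures that the positivity condition (\ref{eq:hypo3}) carries over.
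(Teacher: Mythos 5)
There is a genuine gap, and it sits exactly at the point the paper flags as its main novelty: the equality $\rho(r+J)=\rho(r+G)$ in (v). Your duality computation correctly yields $\rho(r+G)=\rho(\tilde{r}+\tilde{J})$ and $\rho(r+J)=\rho(\tilde{r}+\tilde{G})$, but the remaining link is \emph{not} a consequence of ``$J$ and $G$ share their non-zero eigenvalues because their kernels are transposes''. That fact concerns the compact operators $J$ and $G$ alone; spectra do not behave additively, so it says nothing about $\sigma(r+J)$ versus $\sigma(r+G)$. Nor are $r+J$ and $r+G$ adjoints of one another on $C(\cS)$ (the adjoint of $r+J$ is $\tilde{r}+\tilde{G}$ acting on $\mathcal{M}(\cS)$, a different space; the Hilbert-space identity $\rho(T)=\rho(T^{*})$ on $L^{2}$ would not transfer to the $C(\cS)$ spectral radius without further argument). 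The paper proves $\rho(r+J)=\rho(r+G)$ in two stages: when $\overline{r}<\rho(r+G)$ it follows from (ii) together with the irreducibility statement of Proposition \ref{prop:schaefermain} (ii) (a positive eigenfunction can only belong to the spectral radius); in the remaining case $\frac{1}{\overline{r}-r}\in L^{1}(\cS)$ it constructs, via Lemma \ref{lemm:approxfonc}, a decreasing uniform approximation $r_{j}\downarrow r$ with $\mathrm{Leb}(\Sigma_{j})>0$ and $\overline{r}_{j}=\overline{r}$, applies the first stage to each $r_{j}+J$, and passes to the limit using monotonicity (Proposition \ref{annex:montoto}) and upper semi-continuity (Lemma \ref{lemm:uppersemi}) of the spectral radius. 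None of this machinery is present in your plan, and without it (v) is unproven in the singular case $\rho(r+G)=\overline{r}$ — which is precisely the case the theorem is designed to cover.

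A secondary, fixable issue is an ordering problem: in your argument for (iii) (second sub-case) you invoke ``$K_{\mu}$ is bounded below by a positive constant'', but the lower bound
\begin{equation*}
\int_{\cS}K(y,x)\mu(dy)\geq c_{0}\,\mu\bigl(\mathcal{V}(x,\epsilon_{0})\cap\cS\bigr)
\end{equation*}
is only useful once one knows $\mu$ charges every ball, i.e.\ the positivity statement you propose to derive only \emph{afterwards} as a by-product of (ii)--(iv). The paper avoids this circularity by establishing the positivity of $\mu$ on Lebesgue-positive sets first (a maximal-set argument testing the eigenvalue identity against $\mathds{1}_{A}$ and using (\ref{eq:hypo3})), and then using $\inf_{x}\int K(y,x)\mu(dy)>0$ freely in (ii) and (iii). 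You should do the same. The rest of your outline — the rewriting of the eigenvalue identity, the case split on $\rho(r+G)$ versus $\overline{r}$, the Gelfand-formula proof of (i), the non-integrability contradiction in (iii), and the Lebesgue decomposition in (iv) — is sound and in places cleaner than the paper's (your proof of the second case of (iii) by contradiction is more direct than the paper's quantitative argument with the set $F$).
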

\begin{proof}
See Section 2.2.
\end{proof}
\begin{rema}
In \cite{coville2010simple},\cite{coville2013singular}, Coville studies some spectral properties of the operators introduced above. To do so, he introduces the generalised principal eigenvalue 
\begin{equation*}
\lambda_p(r+J)=\sup\lbrace \lambda\in\R\vert \exists \varphi\in C(\cS), \varphi > 0 \text{ such that }(r+J)\varphi +\lambda\varphi \leq 0\rbrace 
\end{equation*}
which generalises the Perron-Frobenius eigenvalue for irreducible matrices with non-negative coefficients.  The point (iii) is similar to the criterion obtained (in a more general setting) in \cite{coville2010simple}. The point (iv) is contained in the results of \cite{coville2013singular}.  The point (v) is new. It is crucial for the proof of Propositions \ref{theo:main} and \ref{theo:maindual}.
\end{rema}
\subsection{Proof of Theorem \ref{prop:eigenintegro}}
The proof of Theorem \ref{prop:eigenintegro} is given at the end of this section. We start by proving a lemma in which some well known facts about the operators introduced previously are recalled. We give a proof for the convenience of the reader. We denote by $\rho_e(r+J)$ the essential spectral radius of $r+J$ (see Appendix A).
\begin{lemm}\label{lemm:propop}
Assume Assumptions \ref{hypo4}. 
\begin{itemize}
\item[(i)] The operators $r+J$ and $r+G$ are positive and irreducible on $C(\cS)$.
\item[(ii)] The operators $J$ and $G$ are compact on $C(\cS)$.
\item[(iii)]$\overline{r}=\rho_e(r+J)=\rho_e(r+G)$.
\end{itemize}
\end{lemm}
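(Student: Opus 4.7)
The plan is to establish each of the three items separately, using the compactness of $\cS = \overline{\Omega}$ from \eqref{hyp:S}, the pointwise lower bound on $K$ from \eqref{eq:hypo3}, and standard perturbation results for the essential spectrum.

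For (i), positivity of $r+J$ and $r+G$ on the cone $C^+(\cS)$ is immediate from $r \geq 0$ and $K \geq 0$. For irreducibility I would propagate positivity through iterates. Given any $f \in C^+(\cS) \setminus \{0\}$, continuity yields an open set $U_0 \subset \cS$ on which $f > 0$. By \eqref{eq:hypo3}, $K(y,x) \geq c_0$ whenever $|y-x| < \epsilon_0$, hence
\[
Jf(x) \;\geq\; c_0 \int_{U_0 \cap \mathcal{V}(x,\epsilon_0)} f(y)\,dy \;>\;0
\]
on the $\epsilon_0$-thickening $U_1$ of $U_0$. Iterating this thickening and using connectedness and boundedness of $\cS$, some iterate $(r+J)^n f$ is strictly positive on all of $\cS$, which yields the standard notion of irreducibility (no non-trivial closed invariant ideal). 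The argument for $r+G$ is symmetric, since the remark after \ref{hypo4} gives the analogous lower bound on $K(y,x)$.

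For (ii), $\cS$ is compact and $K \in C(\cS \times \cS)$ is therefore uniformly continuous and uniformly bounded. For $f$ in the unit ball of $C(\cS)$, the family $\{Jf\}$ is uniformly bounded and, by uniform continuity of $K$, equicontinuous; the Arzelà-Ascoli theorem gives compactness of $J$, and of $G$ by the same argument.

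For (iii), $r+J$ is a compact perturbation of the multiplication operator $M_r$ on $C(\cS)$, so by the stability of the essential spectrum under compact perturbations (Weyl's theorem) we have $\sigma_e(r+J) = \sigma_e(M_r)$. The spectrum of $M_r$ on $C(\cS)$ equals $r(\cS)$: for $\lambda \notin r(\cS)$, the continuous function $1/(r-\lambda)$ inverts $M_r - \lambda$, whereas for $\lambda \in r(\cS)$ the operator $M_r - \lambda$ is not Fredholm because it does not have closed range. Indeed, by the closed range theorem the closed range equals the annihilator of $\ker((M_r-\lambda)^*)$, which is the space of measures supported in $\{r = \lambda\}$, so $\overline{\mathrm{range}(M_r-\lambda)} = \{h \in C(\cS): h \equiv 0 \text{ on } \{r = \lambda\}\}$, while the actual range is strictly smaller (a continuous function $h$ in the range must satisfy $h/(r-\lambda) \in C(\cS)$, a genuine vanishing-rate condition at each point of $\{r = \lambda\}$). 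Therefore $\sigma_e(M_r) = r(\cS)$ and $\rho_e(r+J) = \sup_{x \in \cS} r(x) = \overline{r}$. The same argument handles $r+G$. The most delicate step is verifying that $M_r - \lambda$ fails to be Fredholm at every $\lambda \in r(\cS)$; this is classical but deserves care.
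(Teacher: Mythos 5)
Your proposal is correct and follows essentially the same route as the paper: positivity plus a finite chaining/thickening argument over the connected compact set $\cS$ for irreducibility, Arzel\`a--Ascoli for compactness of $J$ and $G$, and invariance of the essential spectral radius under compact perturbation together with $\sigma_e(M_r)=r(\cS)$ for (iii). The only cosmetic difference is that the paper identifies $\sigma_e(M_r)$ via the ``no isolated points'' clause of Browder's definition of the essential spectrum (Lemma \ref{lemm:spectremult}) rather than via failure of closed range, but this does not change the structure of the argument.
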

\begin{proof}
(i): Since $r$ is positive and $K$ is non-negative, $r+J$ and $r+G$ are positive endomorphisms of $C(\cS)$. To prove irreducibility, it suffices to prove that there is $m\in\N^{*}$ such that for all $f\in C^{+}(\cS)$ and $x\in \cS$, $(r+J)^m f(x)>0$ (see Definition \ref{defi:irred} and Lemma \ref{lemm:quasi}). Since the set $\cS$ is compact, there exist $n\in\mathbb{N}^{*}$ and $(B_i)_{i=1}^{n}$ a family of balls  with radius $\epsilon_0/4$ such that $\cS\subset\cup_{i=1}^n B_i$. Let $f\in C^{+}(\cS)$ be non-zero and let $I$ be an open subset of $\cS$ such that $f$ is positive on $I$. For all $x\in\cS$ we have
\begin{align*}
(r+J)^n f(x)&\geq \int_{I}dx_1 f(x_1)K(x_1,x_2)\int_{\cS}dx_2\ldots\int_{\cS}dx_n K(x_n,x)\\
&\geq C\int_{I\cap B_{i_1}\cap\cS}dx_1 K(x_1,x_2)\int_{B_{i_2}\cap\cS}dx_2\ldots\int_{B_{i_{n}}\cap\cS}dx_n K(x_n,x)
\end{align*}
where $C>0$ and $(i_1,\ldots,i_n)\in \llbracket 1,n\rrbracket^n$ satisfies: $B_{i_1}\cap I$ has non-empty interior; for any $k\in\llbracket 1,n-1\rrbracket $, $u\in B_{i_{k}}\cap\cS$, $v\in B_{i_{k+1}}\cap\cS$, $K(u,v)>c_0$ and for any $u\in B_n\cap \cS$, $K(u,x)>c_0$. It comes that
\begin{equation*}
(r+J)^n f(x)>0
\end{equation*}
and $r+J$ is irreducible. The proof is similar for $r+G$. 
\\(ii): Let $M$ be a bounded subset of $C(\cS)$ and let $C_M$ be a positive constant such that for all $f\in M$, $\Vert f\Vert_{\infty}<C_M$. Applying Ascoli's criterion, we prove that $J(M)$ is relatively compact in $C(\cS)$. Let $x\in \cS$ and $f\in M$, we have
\begin{align*}
\vert Jf(x)\vert &\leq \Vert f\Vert_{\infty}\sup_{z\in\cS}\int_{\cS}K(y,z)dy\\
&\leq C_M \sup_{z\in\cS}\int_{\cS}K(y,z)dy.
\end{align*}
Then the set $\lbrace Jf(x),f\in M\rbrace$ is bounded and so relatively compact in $\R$. We check the equi-continuity condition. Let $\epsilon>0$, since $K$ is uniformly continuous on $\cS\times\cS$, there exists $\delta > 0$ such that if  $\Vert x_1-x_2\Vert +\Vert y_1 -y_2\Vert <\delta$, we have $\vert K(x_1,y_1)-K(x_2,y_2)\vert <\frac{\epsilon}{C_M \text{Leb}(\cS)} $. Let $y\in\cS$ such that $\Vert x- y\Vert<\delta$. For all $f\in M$, we have
\begin{align*}
\vert Jf(y) - Jf(x)\vert &=\left\vert \int_{\cS}f(z)(K(z,y)-K(z,x))dz\right\vert\\
&\leq C_M \int_{\cS}\vert K(z,y)-K(z,x)\vert dz <\epsilon
\end{align*}
which allows us to conclude for the compactness. The proof is similar for $G$.
\\(iii): Let us note from Lemma \ref{lemm:spectremult} that the essential spectrum of $r$ is $\lbrace r(x), x\in\cS\rbrace$. Moreover $J$ is compact. We deduce that $\rho_e(r+J)=\rho_e(r)$ and that $\overline{r}=\rho_e(r+J)$. The proof is similar for $\rho_e(r+G)$.
\end{proof}
The following lemma makes a duality link between the operators introduced above. It is crucial for the proof of Theorem \ref{prop:eigenintegro}. 
\begin{lemm}\label{lemm:adjoint}
Assume Assumptions \ref{hypo4}. We have
\begin{equation}
(r+J)'=\tilde{r}+\tilde{G}\quad (r+G)'=\tilde{r}+\tilde{J}
\end{equation}
where $(r+J)'$ is the adjoint operator of $r+J$ and $(r+G)'$ is defined similarly.
\end{lemm}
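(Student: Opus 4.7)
The plan is to verify both identities by unwinding the definition of the dual operator under the duality pairing between $C(\cS)$ and $\mathcal{M}(\cS)$, and then apply Fubini--Tonelli. The duality pairing is legitimate because $\cS$ is compact (Assumption (\ref{hyp:S})), so by the Riesz--Markov--Kakutani theorem the topological dual of $C(\cS)$ is $\mathcal{M}(\cS)$, with pairing $\langle f,\mu\rangle = \int_\cS f\,d\mu$. Hence the adjoint $(r+J)'$ is characterised by $\langle (r+J)f,\mu\rangle = \langle f,(r+J)'\mu\rangle$ for every $f\in C(\cS)$ and $\mu\in\mathcal{M}(\cS)$, and similarly for $r+G$.

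First I would handle the multiplicative part: since $r\in C(\cS)$ is bounded, the measure $r(x)\mu$ (with density $r$ against $\mu$) is well defined in $\mathcal{M}(\cS)$ and $\langle rf,\mu\rangle = \int_\cS r(x)f(x)\,\mu(dx) = \langle f, r\mu\rangle$. So the contribution of $r$ to both adjoints is exactly the $\tilde r$ part in (\ref{eq:defopmeasure}). Next, I would compute the contribution of $J$. For $f\in C(\cS)$ and $\mu\in\mathcal{M}(\cS)$,
\begin{equation*}
\langle Jf,\mu\rangle = \int_\cS\!\!\int_\cS K(y,x)f(y)\,dy\,\mu(dx).
\end{equation*}
Because $K$ is continuous on the compact set $\cS\times\cS$ it is bounded, and $f$ is bounded, so Fubini--Tonelli applies to the product measure $dy\otimes\mu(dx)$ and I may exchange the order of integration:
\begin{equation*}
\langle Jf,\mu\rangle = \int_\cS f(y)\Bigl(\int_\cS K(y,x)\,\mu(dx)\Bigr)dy.
\end{equation*}
Renaming the dummy variable $y$ as $x$ (and $x$ as $y$) on the right-hand side shows this equals $\langle f,\tilde G\mu\rangle$, where $\tilde G\mu = \bigl(\int_\cS K(x,y)\mu(dy)\bigr)dx$. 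Combining with the multiplicative part gives $(r+J)'=\tilde r+\tilde G$.

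The second identity is obtained by exactly the same argument with the roles of the two arguments of $K$ interchanged: starting from $\langle Gf,\mu\rangle = \int_\cS\!\int_\cS K(x,y)f(y)\,dy\,\mu(dx)$, Fubini yields $\int_\cS f(y)\bigl(\int_\cS K(x,y)\mu(dx)\bigr)dy$, and renaming variables gives $(r+G)'\mu = r(x)\mu + \bigl(\int_\cS K(y,x)\mu(dy)\bigr)dx = (\tilde r+\tilde J)\mu$.

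There is essentially no obstacle here: the only conditions to check are the applicability of Fubini (ensured by boundedness of $K$ and finiteness of $\mu$) and the correct identification of the topological dual of $C(\cS)$, both of which follow from compactness of $\cS$ and Assumptions \ref{hypo4}. What actually makes the lemma useful later is this symmetric swap of the two arguments of $K$, which is precisely why the eigenvalue $\rho(r+G)$ rather than $\rho(r+J)$ appears in the measure-valued eigenproblem of Theorem \ref{prop:eigenintegro}.
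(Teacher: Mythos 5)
Your proposal is correct and follows essentially the same route as the paper: unwind the duality pairing $\langle (r+J)f,\mu\rangle$, separate the multiplication part, and apply Fubini to swap the order of integration in the kernel part, identifying the result as $\langle f,(\tilde r+\tilde G)\mu\rangle$. The extra care you take in justifying Fubini (boundedness of $K$ on the compact $\cS\times\cS$, finiteness of $\mu$) and in identifying the dual of $C(\cS)$ is sound but does not change the argument.
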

\begin{proof}
Let $f\in C(\cS)$ and $\mu\in\mathcal{M}(\cS)$,
\begin{align*}
\int_{\cS}\mu(dx)(r+J)f(x)&=\int_{\cS}\mu(dx)\left(r(x)f(x)+\int_{\cS}K(y,x)f(y)dy\right)\\
&=\int_{\cS}f(x)r(x)\mu(dx)+\int_{\cS}\left(f(y)\int_{\cS}K(y,x)\mu(dx)\right)dy\\
&=\int_{\cS}f(x)\left(r(x)\mu(dx)+dx\int_{\cS}K(x,y)\mu(dy)\right)\\
&=\int_{\cS}f(x)(\tilde{r}+\tilde{G})\mu(dx)
\end{align*}
where we used Fubini's Theorem. The proof is similar for $(r+G)'$.
\end{proof}
The next result is easily adapted from \cite[ Appendix §2.2.6]{schaefer1971graduate} (it was originally introduced by Krein-Rutman \cite{krein1948linear}) and \cite[Appendix §3.3.3]{schaefer1971graduate}. Combined  with Lemma \ref{lemm:adjoint}, it is the main tool for the proof of Theorem \ref{prop:eigenintegro}.
\begin{prop}\label{prop:schaefermain}
\begin{itemize}
\item[(i)]Let $T$ be a positive endomorphism of $C(\cS)$. The spectral radius $\rho(T)$ is an eigenvalue of $T'$ associated with an eigenvector which belongs to $\mathcal{M}^{+}(\cS)$.
\item[(ii)]Let $T$ be an irreducible endomorphism of $C(\cS)$. Then the spectral radius $\rho(T)$ is the only possible eigenvalue of $T$ associated with a non-negative eigenvector. Moreover, if $\rho(T)$ is a pole of the resolvent, it is an eigenvalue of $T$ with algebraic multiplicity equals to one.
\end{itemize}
\end{prop}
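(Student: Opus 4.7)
The plan is to adapt the classical Krein-Rutman argument to the non-reflexive setting $C(\cS),\mathcal{M}(\cS)$. First I would identify the objects: by the Riesz representation theorem, $C(\cS)'\cong\mathcal{M}(\cS)$ with the total variation norm, and the dual of the positive cone $C^{+}(\cS)$ is exactly $\mathcal{M}^{+}(\cS)$. Since $T$ is bounded and positive, $T'$ is a positive endomorphism of $\mathcal{M}(\cS)$ and $\rho(T')=\rho(T)$. For (i), I would use the resolvent-blowup approach. Pick any $\mu_0\in\mathcal{M}^{+}(\cS)$ with $\mu_0(\cS)=1$; for $\lambda>\rho(T)$ the Neumann series $R(\lambda,T')=\sum_{n\geq 0}\lambda^{-n-1}(T')^n$ makes $R(\lambda,T')\mu_0$ a positive measure. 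Normalising by total mass,
\begin{equation*}
\nu_\lambda=\frac{R(\lambda,T')\mu_0}{R(\lambda,T')\mu_0(\cS)}\in\mathcal{M}^{+}(\cS),\qquad \nu_\lambda(\cS)=1,
\end{equation*}
the family $\{\nu_\lambda\}$ is weak-* precompact by Banach-Alaoglu, and any cluster point $\mu^{*}$ still satisfies $\mu^{*}(\cS)=1$, so is non-zero. Since $\rho(T')\in\sigma(T')$, the quantity $R(\lambda,T')\mu_0(\cS)$ blows up as $\lambda\downarrow\rho(T)$, so $(\lambda I-T')\nu_\lambda=\mu_0/R(\lambda,T')\mu_0(\cS)$ tends to zero in total variation. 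Testing against any $f\in C(\cS)$ and using the weak-* continuity of $T'$ on TV-bounded sets then yields $T'\mu^{*}=\rho(T)\mu^{*}$.

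For (ii), suppose $v\in C^{+}(\cS)\setminus\{0\}$ satisfies $Tv=\alpha v$ with $\alpha\geq 0$. Irreducibility of $T$ forces $T^{n}v>0$ on $\cS$ for some $n\in\N^{*}$, so $v>0$ on $\cS$. Let $\mu\in\mathcal{M}^{+}(\cS)\setminus\{0\}$ be the eigenvector of $T'$ produced in (i). A short dual argument shows $\mu$ charges every non-empty open subset of $\cS$: if $\mu(U)=0$ for some non-empty open $U$, then the equation $\rho(T)\mu(U)=\langle T'\mu,\mathds{1}_U\rangle$ combined with positivity and irreducibility of $T$ produces a contradiction. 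Consequently $\langle\mu,v\rangle>0$, and
\begin{equation*}
\alpha\langle\mu,v\rangle=\langle\mu,Tv\rangle=\langle T'\mu,v\rangle=\rho(T)\langle\mu,v\rangle
\end{equation*}
forces $\alpha=\rho(T)$. When $\rho(T)$ is a pole of the resolvent, the associated spectral projection $P=\frac{1}{2\pi i}\oint_\Gamma R(\zeta,T)\,d\zeta$ commutes with $T$ and is positive by Schaefer's Appendix §3.3.3; its range is a $T$-invariant sublattice, and irreducibility of $T$ then forces $\dim\mathrm{Range}(P)=1$ with the pole simple, which is precisely algebraic multiplicity equal to one.

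The main obstacle is establishing $R(\lambda,T')\mu_0(\cS)\to\infty$ as $\lambda\downarrow\rho(T)$, which is what makes the normalisation in (i) non-trivial. The clean way is to argue by contradiction: if the quantity stayed bounded along a sequence $\lambda_k\downarrow\rho(T)$, then by positivity $R(\lambda_k,T')$ would be uniformly bounded on $\mathcal{M}^{+}(\cS)$ (test against $\mathds{1}_\cS$), hence on all of $\mathcal{M}(\cS)$ via the Jordan decomposition, contradicting $\rho(T)\in\sigma(T')$. Once this blow-up is in hand, the rest of the argument is routine, and the positivity/irreducibility bookkeeping for the spectral projection in the multiplicity statement is standard Schaefer material which only needs to be cited.
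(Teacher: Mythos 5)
The paper itself does not prove this proposition; it is imported from Schaefer's appendix (originally Krein--Rutman), so you are supplying an argument where the paper only cites. Your part (ii) is essentially sound: irreducibility forces $v>0$ everywhere on the compact set $\cS$ (Lemma \ref{lemm:quasi}), whence $\langle\mu,v\rangle\geq(\min_{\cS}v)\,\mu(\cS)>0$ directly --- the detour through ``$\mu$ charges every non-empty open set'' is unnecessary (and not obviously true for a general irreducible $T$, as opposed to the paper's specific kernels). The multiplicity statement is, as in the paper, a citation of Schaefer, which is acceptable.

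The genuine gap is in part (i), in the claim that $R(\lambda,T')\mu_0(\cS)\to\infty$ as $\lambda\downarrow\rho(T)$ for a \emph{fixed} $\mu_0$. Your contradiction argument does not establish it: boundedness of $\langle R(\lambda_k,T')\mu_0,\mathds{1}\rangle=\langle\mu_0,R(\lambda_k,T)\mathds{1}\rangle$ for the single measure $\mu_0$ does not bound $\sup_{\cS}R(\lambda_k,T)\mathds{1}$, which is what controls $R(\lambda_k,T')$ uniformly on $\mathcal{M}^{+}(\cS)$. Worse, the claim is simply false: take $\cS=[0,1]$, $T$ the multiplication by $r(x)=1-\sqrt{x}$ (a positive endomorphism with $\rho(T)=1\in\sigma(T)$) and $\mu_0$ Lebesgue measure; then $R(\lambda,T')\mu_0(\cS)=\int_0^1\frac{dx}{\lambda-1+\sqrt{x}}\to 2$ as $\lambda\downarrow 1$, and your $\nu_\lambda$ converges weakly-* to $\tfrac12 x^{-1/2}dx$, which is not an eigenvector of $T'$ --- the actual positive eigenvector is $\delta_0$. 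This concentration failure is precisely the singular phenomenon the paper is built around (Theorem \ref{prop:eigenintegro} (iv)), so it is not an exotic corner case here. The standard repair is to let the reference measure depend on $\lambda$: since $R(\lambda,T)\geq 0$ and $\mathds{1}$ is an order unit of $C(\cS)$, one has $\Vert R(\lambda,T)\Vert_{op}=\Vert R(\lambda,T)\mathds{1}\Vert_{\infty}\geq(\lambda-\rho(T))^{-1}\to\infty$ (using $\rho(T)\in\sigma(T)$, itself a Pringsheim-type fact for positive operators that you are implicitly assuming); choosing $x_\lambda$ attaining this supremum and setting $\mu_0=\delta_{x_\lambda}$ makes the normalising constant $R(\lambda,T)\mathds{1}(x_\lambda)$ genuinely blow up, after which your weak-* compactness and limit-passage argument goes through unchanged.
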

We give now a technical lemma.
\begin{lemm}\label{lemm:approxfonc}
Assume Assumptions \ref{hypo4}. Let $x_0\in\Sigma=\lbrace x\in\cS:r(x)=\overline{r}\rbrace$. There exists a family $(r_j)_{j\geq 0}$ of $C^{+}(\cS)$ which satisfy for all $j\geq 0$:
\begin{itemize}
\item[(i)]For all $x\in\cS$, $r_j(x)\geq r_{j+1}(x)$,
\item[(ii)] $r_j(x_0)=\overline{r}_j=\overline{r}$ and $\text{Leb}(\Sigma_j)>0,$ where $\Sigma_j=\lbrace x\in\cS:r_j(x)=\overline{r}_j\rbrace$,
\item[(iii)]$\Vert r_j -r\Vert_{\infty} \underset{j\rightarrow\infty}{\rightarrow}0$.
\end{itemize}
\end{lemm}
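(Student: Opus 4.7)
The plan is to construct $r_j$ by raising $r$ up to the level $\overline{r}$ on a shrinking sequence of neighbourhoods of $x_0$. Choose a sequence $\delta_j\downarrow 0$ with $2\delta_{j+1}<\delta_j$, and for each $j$ a cutoff $\chi_j\in C(\cS,[0,1])$ with $\chi_j\equiv 1$ on $\overline{\mathcal{V}}(x_0,\delta_j)\cap\cS$ and $\chi_j\equiv 0$ outside $\mathcal{V}(x_0,2\delta_j)$; for instance one can take
\[
\chi_j(x)=\max\!\left(0,\,1-\delta_j^{-1}(\Vert x-x_0\Vert-\delta_j)_+\right).
\]
Then define $r_j:=\max(r,\overline{r}\chi_j)$, which belongs to $C^{+}(\cS)$.

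For (i), the choice $2\delta_{j+1}<\delta_j$ forces $\mathrm{supp}(\chi_{j+1})\subset\overline{\mathcal{V}}(x_0,\delta_j)$, and on that set $\chi_j\equiv 1\geq\chi_{j+1}$ while elsewhere $\chi_{j+1}\equiv 0$; hence $\chi_j\geq\chi_{j+1}$ pointwise and so $r_j\geq r_{j+1}$. For (ii), both arguments of the max are bounded by $\overline{r}$, so $r_j\leq\overline{r}$ everywhere; and $r_j\equiv\overline{r}$ on $\overline{\mathcal{V}}(x_0,\delta_j)\cap\cS$, which contains $x_0$. This yields $\overline{r}_j=\overline{r}$ and $\Sigma_j\supset\overline{\mathcal{V}}(x_0,\delta_j)\cap\cS$. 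For (iii), the difference $r_j-r=\max(0,\overline{r}\chi_j-r)$ vanishes outside $\mathcal{V}(x_0,2\delta_j)$ and is bounded by $\overline{r}-r$ inside; continuity of $r$ together with $r(x_0)=\overline{r}$ then forces $\sup_{\mathcal{V}(x_0,2\delta_j)\cap\cS}(\overline{r}-r)\to 0$, giving the uniform convergence.

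The only nontrivial point is the positivity of $\text{Leb}\bigl(\overline{\mathcal{V}}(x_0,\delta_j)\cap\cS\bigr)$ when $x_0\in\partial\cS$. This is exactly where Assumption (\ref{hyp:S}) enters: because $\Omega$ is open with Lipschitz boundary, every boundary point admits an interior cone contained in $\Omega$, so a small ball around $x_0$ meets $\overline{\Omega}$ in a set of positive Lebesgue measure. Once this is granted, the three properties are all matters of direct verification.
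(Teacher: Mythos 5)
Your construction is correct and essentially the same as the paper's: both define $r_j=\max(h_j,r)$ with $h_j\equiv\overline{r}$ on a shrinking ball around $x_0$ and $h_j\le\overline{r}$ everywhere, and obtain the monotonicity (i) by (roughly) halving the radius at each step; you merely replace the paper's Tietze extension by the explicit cutoff $\overline{r}\chi_j$, which makes the verification of (i) cleaner. (Note that the positivity of $\text{Leb}\left(\overline{\mathcal{V}}(x_0,\delta_j)\cap\cS\right)$ already follows from $\cS=\overline{\Omega}$ with $\Omega$ open and $x_0\in\overline{\Omega}$, since $\mathcal{V}(x_0,\delta_j)\cap\Omega$ is then a nonempty open set; the Lipschitz cone condition is not needed for this point.)
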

\begin{proof}
Let $x_0\in\Sigma$ be fixed. For all $\epsilon > 0$ sufficiently small, we define the closed set $A_{\epsilon}=\left(\mathcal{V}(x_0,\epsilon)^c \cup\overline{\mathcal{V}}(x_0,\epsilon/2)\right)\cap \cS$ and a map $g_{\epsilon}\in C(A_{\epsilon})$ by $g_{\epsilon}(x)=r(x)$ if $x\in \mathcal{V}(x_0,\epsilon)^c$ and $g_{\epsilon}(x)=\overline{r}$ if $x\in\overline{\mathcal{V}}(x_0,\epsilon/2)$. By Tietze Theorem, we extend $g_\epsilon$ in a continuous function $h_{\epsilon}$ on $\cS$ such that $\Vert h_{\epsilon}\Vert_{\infty}=\Vert g_{\epsilon}\Vert_{\infty}$
We introduce $r_{\epsilon}\in C(\cS)$ defined by $r_\epsilon(x)=\max( h_{\epsilon}(x),r(x))$. It is straightforward to check that: 1) $\lim_{\epsilon\rightarrow 0}\Vert r_{\epsilon} -r\Vert_{\infty}=0$; $\text{Leb}(\Sigma_{\epsilon})>0$; 3) $r_\epsilon(x)\geq r(x) $ and $\sup_{x\in\cS}r_{\epsilon}(x) = \overline{r}$. We conclude by proving that we can extract a decreasing subsequence of the family $r_{\epsilon}$ which converges uniformly to $r$. To do so, we fix $\epsilon_0>$ small and we define a sequence $(\epsilon_k)_{k\geq 0}$ by $\epsilon_{k+1}=\frac{\epsilon_{k}}{2}$. We check that the sequence $(r_{\epsilon_k})_{k\geq 0}$ is decreasing. Indeed, let $k\geq 0$. If $x\in \overline{\mathcal{V}}(x_0,\epsilon_{k+1}/2)$, then $x\in \overline{\mathcal{V}}(x_0,\epsilon_{k}/4)$ and $r_{\epsilon_{k+1}}(x)=h_{\epsilon_{k+1}}(x)=\overline{r}=h_{\epsilon_{k}}(x)=r_{\epsilon_{k}}(x)$. If $x\in A_{\epsilon_{k+1}}^c$ we have $\epsilon_{k+1}/2<\Vert x- x_0\Vert<\epsilon_{k+1}=\epsilon_{k}/2$. So we have $r_{\epsilon_{k+1}}(x)\leq\overline{r}=h_{\epsilon_{k}}(x)=r_{\epsilon_{k}}(x)$. Finally, if $x\in \mathcal{V}(x_0,\epsilon_{k+1})^c$, $r_{\epsilon_{k+1}}(x)=r(x)\leq r_{\epsilon_{k}}(x)$. So we have proved that for all $x\in\cS$, $r_{\epsilon_{k+1}}(x)\leq r_{\epsilon_k}(x)$.
\end{proof}
Let us now prove Theorem \ref{prop:eigenintegro}. 
\begin{proof}[Proof of Theorem \ref{prop:eigenintegro}]
By Lemma \ref{lemm:adjoint} and Proposition \ref{prop:schaefermain} (i) applied to the endomorphism $r+G$, there exists a non-zero measure $\mu\in\mathcal{M}^{+}(\cS)$ such that for all Borel and bounded functions $f:\cS\rightarrow \R$,
\begin{equation}\label{proof:measure}
\int_{\cS}f(x)\left(\int_{\cS}K(y,x)\mu(dy)\right)dx+\int_{\cS}f(x)(r(x)-\rho(r+G))\mu(dx)=0.
\end{equation}
Assume that there exists a largest Borel subset $A$ of $\cS$, $A\neq \cS$ such that $\text{Leb}(A)>0$ and $\mu(A)=0$. Choosing $f=\mathds{1}_{A}$ in (\ref{proof:measure}), we deduce that for all $x\in A$,
\begin{equation}\label{proo:integrozero}
\int_{\cS}K(y,x)\mu(dy)=0.
\end{equation}
Let $x_0\in A$ be such that $\mathcal{V}(x_0,\epsilon_0)\cap\cS\not\subset A$. By Assumption (\ref{eq:hypo3}) and (\ref{proo:integrozero}) we obtain
\begin{equation*}
0\geq \int_{\mathcal{V}(x_0,\epsilon_0)\cap \cS}K(y,x_0)\mu(dy)\geq c_0\mu(\mathcal{V}(x_0,\epsilon_0)\cap \cS)
\end{equation*}
and $\mu(A\cup (\mathcal{V}(x_0,\epsilon_0)\cap\cS))=0$ which is absurd by definition of $A$. Since $\mu(\cS)>0$, we conclude that for all Borel subset $A$ of $\cS$ such that $\text{Leb(A)}>0$, we have $\mu(A)>0$. 
\\(i): By Lemma \ref{lemm:propop} (iii), we have $\rho_e(r+G)=\rho_e(r)=\overline{r}$. It comes that $\overline{r}\leq\rho(r+G)$. 
\\(ii): Assume that $\overline{r}<\rho(r+G)$. Let $f\in C(\cS)$. Then the map $x\in \cS \longmapsto \frac{f(x)}{\rho(r+G)-r(x)}$ is continuous and bounded. We get
\begin{align*}
\int_{\cS}f(x)\mu(dx)&=\int_{\cS}\frac{f(x)}{\rho(r+G)-r(x)}(\rho(r+G)-r(x))\mu(dx)\\
&=\int_{\cS}\frac{f(x)}{\rho(r+G)-r(x)}\left(\int_{\cS}K(y,x)\mu(dy)\right)dx\\
&=\int_{\cS}f(x)\frac{\int_{\cS}K(y,x)\mu(dy)}{\rho(r+G)-r(x)}dx.
\end{align*} 
So we have $\mu=u(x)dx$ with 
\[u(x)=\frac{\int_{\cS} K(y,x)\mu(dy)}{\rho(r+G)-r(x)}\]
a continuous, non-negative function on $\cS$. Therefore, for all $f\in C(\cS)$
\begin{equation*}
\int_{\cS}f(x)\left(u(x)(r(x)-\rho(r+G))+\int_{\cS}K(y,x)u(y)dy\right)dx=0
\end{equation*}
and we deduce that for all $x\in\cS$, $(r+J)u(x)=\rho(r+G)u(x)$. Assume that there exists $x_0\in\cS$ such that $u(x_0)=0$. Then we have
\begin{equation*}
c_0\int_{\mathcal{V}(x_0,\epsilon_0)\cap\cS}u(y)dy\leq \int_{\cS}K(y,x_0)u(y)dy=0
\end{equation*}
which is absurd by the first statement we proved. Then, $u$ is positive on $\cS$.  Since $\rho(r+G)$ is an eigenvalue of $r+J$ associated with a positive eigenfunction, Proposition \ref{prop:schaefermain} (ii) gives that $\rho(r+G)=\rho(r+J)$. We deduce that $\rho(r+J)>\overline{r}=\rho_e(r+J)$. It comes from Proposition \ref{prop:browder} in Appendix that $\rho(r+J)$ is a pole of the resolvent. Since $r+J$ is irreducible (see Lemma \ref{lemm:propop} (i)), it comes from Proposition \ref{prop:schaefermain} (ii) that the algebraic multiplicity of $\rho(r+J)$ is equals to one. Conversely, assume now that there exists $u\in C^{+}(\cS)$ such that $\mu=u(x)dx$. Then since for all $x\in\cS$
\begin{equation*}
u(x)=\frac{1}{\rho(r+G)-r(x)}\int_{\cS}K(y,x)u(y)dy
\end{equation*}
we deduce that $\overline{r}<\rho(r+G)$.
\\(iii): Assume first that $\text{Leb}(\Sigma)>0$. Choosing $f=\mathds{1}_{\Sigma}$ in (\ref{proof:measure}) and by the definition of $\Sigma$, we get that
\begin{equation*}
\int_{\Sigma}\left(\int_{\cS}K(y,x)\mu(dy)\right)dx =(\rho(r+G)-\overline{r})\mu(\Sigma).
\end{equation*}
Since $\text{Leb}(\Sigma)>0$, we know that $\mu(\Sigma)>0$. Since we have $\inf_{x\in\cS}\int_{\cS}K(y,x)\mu(dy)>0$, we deduce that $\overline{r}<\rho(r+G)$.
Assume now that $\text{Leb}(\Sigma)=0$ and $\frac{1}{\overline{r}-r}\notin L^1(\cS)$. Here, our calculations are inspired by \cite{coville2010simple}. Let 
\[A:=\frac{\mu(\cS)}{\inf_{x\in\cS}\int_{\cS}K(y,x)\mu(dy)}>0\]
and $B>A$. There exists $F$ a closed subset of $\Sigma^{c}$ such that
\begin{equation*}
B<\int_{F}\frac{1}{\overline{r}-r(x)}dx<+\infty.
\end{equation*}
Then, the map \hspace{0.05cm} $\epsilon\in\left[0,+\infty\right[\mapsto \int_{F}\frac{1}{\overline{r}-r(x)+\epsilon}dx$ \hspace{0.05cm} is continuous and strictly decreasing. So, there exists $\epsilon_0>0$ such that 
\begin{equation*}
A<\int_{F}\frac{1}{\overline{r}-r(x)+\epsilon_0}dx.
\end{equation*}
Choosing $f(x)=\mathds{1}_{F}(x)\frac{1}{\rho(r+G)-r(x)}$ in (\ref{proof:measure}), we have
\begin{equation*}
\int_{F}\frac{1}{\rho(r+G)-r(x)}\left(\int_{\cS}K(y,x)\mu(dy)\right)dx=\int_{F}\mu(dx)\leq \mu(\cS)
\end{equation*}
and
\begin{equation*}
\int_{F}\frac{1}{\rho(r+G)-\overline{r}+\overline{r}-r(x)}dx\leq A.
\end{equation*}
Since the map \hspace{0.05cm} $\epsilon\in\left[0,+\infty\right)\mapsto \int_{F}\frac{1}{\overline{r}-r(x)+\epsilon}dx$\hspace{0.05cm} is strictly decreasing, it comes that $\epsilon_0<\rho(r+G)-\overline{r}$  and $\overline{r}<\rho(r+G)$. 
\\(iv): Assume that $\rho(r+G)=\overline{r}$. Let $\mu=\mu^s + h(x)dx$ be the Lebesgue decomposition of the measure $\mu$ with $h\in L^1(\cS)$ and $\mu^s$ the singular part of the measure $\mu$, i.e there exists $E$ a measurable subset of $\cS$ such that $\text{Leb}(\cS)=\text{Leb(E)}$ and $\mu^s(E^c)=\mu^s(\cS)$. It comes from (\ref{proof:measure}) with $f=\mathds{1}_{E^c}$ that
\begin{equation*}
\int_{\cS}\mu^s(dx)(r(x)-\rho(r+G))=\int_{E^c}\mu^s(dx)(r(x)-\rho(r+G))=0.
\end{equation*}
Assume that $\mu^s\neq 0$, then we deduce that the support of the measure $\mu^s$ is a subset of $\Sigma$. 
\\(v): Assume first that $\frac{1}{\overline{r}-r}\notin L^1(\cS)$ and $\text{Leb}(\Sigma)=0$, or $\text{Leb}(\Sigma)>0$. By (iii) we get that $\overline{r}<\rho(r+G)$. By (ii), we deduce that there exists $u\in C(\cS)$, $u>0$ such that $(r+J)u=\rho(r+G)u$. By Proposition \ref{prop:schaefermain} (ii), we have $\rho(r+J)=\rho(r+G)$. Assume now that $\frac{1}{\overline{r}-r}\in L^1(\cS)$. Let $x_0\in\Sigma$. By Lemma \ref{lemm:approxfonc}, there is a sequence $(r_j)_{j\geq 0}$ of $C^{+}(\cS)$ which satisfies: 1) $\Vert r_{j} -r\Vert_{\infty}\rightarrow 0$; 2) $\text{Leb}(\Sigma_j)>0$ and $\overline{r}_j=\overline{r}$ ($\Sigma_j=\lbrace x\in\cS: r_j(x)=\overline{r}_{j}\rbrace$); 3) for all $x\in\cS$, $r_{j+1}(x)\leq r_{j}(x)$. By the first part of the proof of (v), we deduce that $\rho(r_{j}+J)=\rho(r_{j}+G)$ and we conclude that $\rho(r+J)=\rho(r+G)$ taking the limit $j\rightarrow\infty$, using the monotonicity (see Proposition \ref{annex:montoto} (i) in Appendix A) and the upper semi-continuity of the spectral radius (see Lemma \ref{lemm:uppersemi} in Appendix A). The others equalities are proved arguing that $(r+J)'=\tilde{r}+\tilde{G}$ and $(r+G)'=\tilde{r}+\tilde{J}$.
\end{proof}
\section{The Linear Dynamics}
In this section, we apply the results of the previous section to analyse the long-time behaviour of the solutions $(v_t)_{t\geq 0}\in C(\R_+,\mathcal{M}^{+}(\cS\times\R_+))$ of the linear equation:
\begin{equation}\label{eq:edplin}
\begin{cases}
\partial_t v_t(x,a)+\partial_a v_t(x,a)  =-D(x,a)v_t(x,a),\quad (t,x,a)\in \R_+\times\cS\times\R_+,\\
v_t(x,0) = \cF\left[v_t\right](x),\quad v_0\in \mathcal{M}^{+}(\cS\times\R_+).
\end{cases}
\end{equation}
The well posedness of solutions $(v_t)_{t\geq 0}\in C(\R_+,\mathcal{M}^{+}(\cS\times\R_+))$ is proved in \cite{tran2006modeles} using the microscopic approach, and in \cite{canizo2013measure} using a deterministic method. 
We start by proving that Assumptions \ref{hypo1} imply Assumptions \ref{hypo4} for $r_\lambda$ and $K_\lambda$  defined in (\ref{defifonction}).
\begin{lemm}\label{lemm:tech1}
Assume Assumptions \ref{hypo1}.
\begin{itemize}
\item[1)]For all $\lambda\in\left]-\underline{D},+\infty\right[$, the maps $r_{\lambda}$ and $K_{\lambda}$ are well-defined, continuous, respectively positive and non-negative. There exist $\epsilon_0,c_0>0$ such that 
\begin{equation*}
\inf_{x\in\cS}\left(\inf_{y\in \mathcal{V}(x,\epsilon_0)\cap \cS}K_{\lambda}(x,y)\right)>c_0.
\end{equation*}
\item[2)]Moreover we have:
\begin{itemize}
\item[(i)]For all $-\underline{D}<\lambda_1<\lambda_2$ and $(x,y)\in\cS\times\cS$, $r_{\lambda_1}(x)>r_{\lambda_2}(x)$ and $K_{\lambda_1}(x,y)\geq K_{\lambda_2}(x,y)$.
\item[(ii)]For all $\lambda_0> -\underline{D}$, $\lim_{\lambda\rightarrow\lambda_0}\Vert r_{\lambda} -r_{\lambda_0}\Vert_{\infty}=0$ and $\lim_{\lambda\rightarrow\lambda_0}\Vert K_{\lambda} - K_{\lambda_0}\Vert_{\infty}=0$
\item[(iii)]For all $\lambda>-\underline{D}$, let us denote $\overline{r}_{\lambda}:=\sup_{x\in\cS}r_{\lambda}(x)$. Then the map $\lambda\in\left]-\underline{D},+\infty\right[\longmapsto \overline{r}_{\lambda}$ is continuous and (strictly) decreasing.
\end{itemize}
\end{itemize}
\end{lemm}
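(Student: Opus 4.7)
My plan is to deduce each assertion from the dominated bound $R_\lambda(x,a) \le e^{-(\underline{D}+\lambda)a}$, which is $a$-integrable precisely when $\lambda > -\underline{D}$, together with the factorisation $R_\lambda(x,a) = R_{\lambda_0}(x,a)e^{-(\lambda-\lambda_0)a}$.

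For part 1, well-definedness of both integrals follows from this exponential bound and the boundedness of $B$, $k$. Continuity of $r_\lambda$ in $x$ and of $K_\lambda$ in $(x,y)$ is obtained by dominated convergence: for a convergent sequence $x_n \to x$, the integrand $B(x_n,a) R_\lambda(x_n,a)$ converges pointwise (using continuity of $B$ and of $(x,a) \mapsto \int_0^a D(x,\alpha)d\alpha$, which itself follows from dominated convergence) and is dominated by $\|B\|_\infty e^{-(\underline{D}+\lambda)a}$. Positivity of $r_\lambda(x)$ comes from (A4): since $\overset{\circ}{I} \ne \emptyset$ and $I \subset \text{supp}(B(x,\cdot))$, continuity of $B$ produces a sub-interval of $\overset{\circ}{I}$ on which $B(x,\cdot) > 0$, which forces the defining integral to be positive.

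The main obstacle is the uniform lower bound on $K_\lambda$, and I would handle it as follows. The set $\{(x,y) \in \cS^2 : y \in \mathcal{V}(x,\epsilon_0)\cap\cS\}$ is not closed, so I replace it by the compact subset $\Delta := \{(x,y) \in \cS^2 : \|x-y\| \le \epsilon_0/2\}$. On $\Delta$ the map $(x,y) \mapsto K_\lambda(x,y)$ is continuous by the same dominated-convergence argument as above, and at each point of $\Delta$ it is strictly positive, because $I \subset \text{supp}(B(x,\cdot)) \cap \text{supp}(k(x,\cdot,y))$ and the argument used for $r_\lambda$ applies to the product $B(x,\cdot)k(x,\cdot,y)$ on $\overset{\circ}{I}$. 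A continuous positive function on a compact set attains a positive minimum, so we get $\inf_\Delta K_\lambda > 0$; relabelling $\epsilon_0/2$ as $\epsilon_0$ and choosing $c_0$ strictly below this infimum yields the desired conclusion.

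For part 2, (i) is immediate from the strict decrease of $R_\lambda$ in $\lambda$ on $(0,+\infty)$ combined with positivity of $B(x,\cdot)$ on a set of positive Lebesgue measure, which gives a strict inequality for $r_\lambda$; the inequality for $K_\lambda$ is only weak because $k$ may vanish on specific arguments. For (ii), I would write $R_\lambda - R_{\lambda_0} = R_{\lambda_0}(e^{-(\lambda-\lambda_0)a}-1)$, bound $R_{\lambda_0}$ by $e^{-(\underline{D}+\lambda_0)a}$ uniformly in $x$, and apply dominated convergence with an $x$-independent dominating function, which delivers uniform convergence in both $\|\cdot\|_\infty$ norms. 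For (iii), continuity of $\overline{r}_\lambda$ follows from $|\overline{r}_\lambda-\overline{r}_{\lambda_0}| \le \|r_\lambda - r_{\lambda_0}\|_\infty$; strict decrease uses compactness of $\cS$: for $\lambda_1 < \lambda_2$ pick $x^\star$ attaining $r_{\lambda_2}(x^\star) = \overline{r}_{\lambda_2}$, and the strict pointwise inequality from (i) gives $\overline{r}_{\lambda_1} \ge r_{\lambda_1}(x^\star) > r_{\lambda_2}(x^\star) = \overline{r}_{\lambda_2}$.
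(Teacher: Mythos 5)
Your proof is correct and follows essentially the same route as the paper: integrability of $a\mapsto e^{-(\underline{D}+\lambda)a}$ for well-posedness, dominated convergence for continuity and for the uniform convergence in (ii) (the paper uses an explicit Lipschitz bound $\vert e^{-x}-e^{-y}\vert\le\vert x-y\vert$ instead, to the same effect), and pointwise strict monotonicity plus compactness of $\cS$ for (iii). The one place you go beyond the paper is the uniform lower bound on $K_\lambda$, which the paper dismisses as "a consequence of Assumption 1.1"; your compactness argument on $\Delta=\lbrace\Vert x-y\Vert\le\epsilon_0/2\rbrace$, with positivity at each point extracted from (A4) via the intersection of the supports of $B(x,\cdot)$ and $k(x,\cdot,y)$ on $\overset{\circ}{I}$, is a valid and welcome way to make that step explicit.
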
 
\begin{proof}[Proof of Lemma \ref{lemm:tech1}]
1): Let $\lambda> -\underline{D}$, we have
\begin{equation*}
\int_{\R_+}\exp\left(-\int_{0}^{a}D(x,\alpha)d\alpha -\lambda a\right)da\leq \int_{\R_+}\exp\left(-\underline{D}a -\lambda a\right)da <+\infty.
\end{equation*}
So, $r_\lambda$ and $K_\lambda$ are well-defined. Moreover, $r_{\lambda}$ and $K_\lambda$ are continuous by dominated convergence theorem and respectively positive and non-negative by Assumptions \ref{hypo1}. The second part of the assertion is a consequence of assumption \ref{hypo1}.
\\2) (i): Let $\lambda>-\underline{D}$ and $x\in\cS$. By derivation under the integral, 
\begin{equation*}
\partial_\lambda r_\lambda(x)=-(1-p)\int_{\R_+} aB(x,a)\exp\left(-\int_{0}^{a}D(x,\alpha)d\alpha-\lambda a\right)da <0.
\end{equation*}
The proof is similar for $K_{\lambda}$. 
\\(ii): Let $\lambda_0>-\underline{D}$ and $\alpha\in \left]-\underline{D},\lambda_0\right[$. For all $x\in \cS$ and $\lambda > \alpha$, we have
\begin{align*}
\vert r_{\lambda}(x)-r_{\lambda_0}(x)\vert &\leq \int_{0}^{+\infty}B(x,a)\vert e^{-(\lambda+\underline{D})a}-e^{-(\lambda_0 +\underline{D})a}\vert da \\
&\leq \vert \lambda -\lambda_0\vert \Vert B\Vert_{\infty}\int_{0}^{+\infty}a e^{-(\alpha +\underline{D}) a}da
\end{align*}
where we used that $\vert e^{-x} -e^{-y}\vert \leq \vert x -y\vert$ if $x,y\geq 0$, and which allow us to conclude. The proof is similar for $K_\lambda$.
\\(iii): Let $\lambda_0>-\underline{D}$ and let $(\lambda_j)$ be a sequence such that $\lambda_j\rightarrow\lambda_0$. Let $(x_j)\in\cS$ such that $\overline{r}_{\lambda_j}=r_{\lambda_j}(x_j)$ and let denote $x^{*}\in\cS$ a limit point of $(x_j)$. Using (ii), we obtain that $\overline{r}_{\lambda_j}\rightarrow r_{\lambda_0}(x^{*})=\overline{r}_{\lambda_0}$. We conclude by proving the strict monotonicity. Let $-\underline{D}< \lambda_1<\lambda_2$. For all $x\in\cS$ we have $r_{\lambda_2}(x)<r_{\lambda_1}(x)$, and so $r_{\lambda_2}(x)<\overline{r}_{\lambda_1}$. Since $\cS$ is compact and $r_{\lambda_2}$ is continuous, we get that $\overline{r}_{\lambda_2}<\overline{r}_{\lambda_1}$.
\end{proof}
For any $\lambda\in\left]-\underline{D},+\infty\right[$ we define the operators $\tilde{r}_\lambda + \tilde{J}_{\lambda},\tilde{r}_\lambda + \tilde{G}_{\lambda}:\mathcal{M}(\cS)\rightarrow\mathcal{M}(\cS)$ by  
\begin{equation}\label{eq:defopmeasure2}
(\tilde{r}_\lambda+\tilde{J}_\lambda)\mu =r_\lambda(x)\mu + \left(\int_{\cS}K_\lambda(y,x)\mu(dy)\right)dx,
\end{equation}
\begin{equation}\label{eq:defopmeasure3}
(\tilde{r}_\lambda+\tilde{G}_\lambda)\mu =r_\lambda(x)\mu + \left(\int_{\cS}K_\lambda(x,y)\mu(dy)\right)dx
\end{equation}
and $r_\lambda +J_\lambda, r_\lambda +G_\lambda:C(\cS)\rightarrow C(\cS)$ similarly as (\ref{eq:opintrocoville}) and (\ref{eq:opdualintrocoville}).
By Lemma \ref{lemm:tech1}, we deduce that Theorem \ref{prop:eigenintegro} is satisfied for $r_\lambda$ and $K_\lambda$. We recall the conclusions in the following lemma.
\begin{lemm}\label{prop:eigenintegroparam}
Assume Assumptions \ref{hypo1}. For all $\lambda >-\underline{D}$, there exists $\mu_\lambda\in \mathcal{M}^{+}(\cS)$ such that
\begin{equation*}
\rho(r_\lambda+G_\lambda)\mu_\lambda =(\tilde{r}_\lambda+\tilde{J}_\lambda)\mu_\lambda
\end{equation*}
which satisfies $\mu_\lambda(A)>0$ for all $A$ Borel subset of $\cS$ such that $\text{Leb}(A)>0$. Moreover, let us denote $\overline{r}_\lambda =\sup_{x\in\cS}r_\lambda(x)$ and $\Sigma_\lambda=\lbrace x\in\cS:r_\lambda(x)=\overline{r}_\lambda\rbrace$. Then we have:
\begin{itemize}
\item[(i)]$\overline{r}_\lambda\leq \rho(r_\lambda+G_\lambda)$.
\item[(ii)]$\overline{r}_\lambda<\rho(r_\lambda+G_\lambda)$ if and only if there exists $u_\lambda\in C(\cS)$, $u_\lambda >0$ such that $\mu_\lambda =u_\lambda(x)dx$. In this case, $\rho(r_\lambda+G_\lambda)$ is an eigenvalue of $r_\lambda+J_\lambda$ with algebraic multiplicity equals to one, associated with the eigenfunction $u$.
\item[(iii)]If $\text{Leb}(\Sigma_\lambda)> 0$ or if, $\text{Leb}(\Sigma_\lambda)=0$ and $\frac{1}{\overline{r}_\lambda-r_\lambda}\notin L^1(\cS)$, we have $\overline{r}_\lambda<\rho(r_\lambda+G_\lambda)$.
\item[(iv)]If $\rho(r_\lambda+G_\lambda)=\overline{r}_\lambda$, we have $\mu_\lambda=\mu^s +h(x)dx$ with $h\in L^1(\cS)$ and either $\mu^s=0$, or $\mu^s\neq 0$ and $\text{supp}(\mu^s)\subset \Sigma$.
\item[(v)]$\rho(r_\lambda+J_\lambda)=\rho(r_\lambda+G_\lambda)=\rho(\tilde{r}_\lambda+\tilde{J}_\lambda)=\rho(\tilde{r}_\lambda+\tilde{G}_\lambda)$.
\end{itemize}
Moreover, the same results are true exchanging $J_\lambda$ and $G_\lambda$, $\tilde{J}_\lambda$ and $\tilde{G}_\lambda$.
\end{lemm}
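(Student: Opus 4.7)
The plan is to recognise this lemma as a pointwise-in-$\lambda$ corollary of Theorem \ref{prop:eigenintegro}, applied to the pair $(r_\lambda, K_\lambda)$ defined in (\ref{defifonction}). No new spectral argument is needed: the whole task is to verify that the hypotheses of Theorem \ref{prop:eigenintegro}, namely Assumptions \ref{hypo4} on the coefficients of the operators, are satisfied for every admissible $\lambda$.

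First, I fix $\lambda\in\left]-\underline{D},+\infty\right[$ and invoke Lemma \ref{lemm:tech1}, part 1). This gives exactly the three conditions (\ref{eq:hypo1})–(\ref{eq:hypo3}) required of $(r_\lambda,K_\lambda)$: the function $r_\lambda\in C(\cS)$ is positive, the kernel $K_\lambda\in C(\cS\times\cS)$ is non-negative, and there exist $\epsilon_0,c_0>0$ (independent of the choice of $x$) such that
\begin{equation*}
\inf_{x\in\cS}\Bigl(\inf_{y\in\mathcal{V}(x,\epsilon_0)\cap\cS}K_\lambda(x,y)\Bigr)>c_0.
\end{equation*}
Thus Assumptions \ref{hypo4} are in force for the coefficients $r_\lambda$ and $K_\lambda$.

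Second, I apply Theorem \ref{prop:eigenintegro} to the operators $r_\lambda+J_\lambda$, $r_\lambda+G_\lambda$ on $C(\cS)$ and their duals $\tilde{r}_\lambda+\tilde{G}_\lambda$, $\tilde{r}_\lambda+\tilde{J}_\lambda$ on $\mathcal{M}(\cS)$ as defined by (\ref{eq:defopmeasure2})–(\ref{eq:defopmeasure3}). The theorem yields a non-zero $\mu_\lambda\in\mathcal{M}^{+}(\cS)$ satisfying $(\tilde{r}_\lambda+\tilde{J}_\lambda)\mu_\lambda=\rho(r_\lambda+G_\lambda)\mu_\lambda$ and giving positive mass to every Borel set of positive Lebesgue measure, together with statements (i)–(v) verbatim for the parametrised objects $\overline{r}_\lambda$ and $\Sigma_\lambda=\{x\in\cS:r_\lambda(x)=\overline{r}_\lambda\}$. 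The ``exchanging $J_\lambda$ and $G_\lambda$'' clause is obtained identically by interchanging the roles of $r+J$ and $r+G$ in the same application of Theorem \ref{prop:eigenintegro}.

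Since everything is a direct transcription, there is no real obstacle; the only point to be careful about is that the constants $\epsilon_0,c_0$ delivered by Lemma \ref{lemm:tech1} may in principle depend on $\lambda$, but this is harmless because the lemma is stated for each fixed $\lambda$ and no uniformity in $\lambda$ is claimed. The statement is therefore established by combining Lemma \ref{lemm:tech1}, part 1), with Theorem \ref{prop:eigenintegro}.
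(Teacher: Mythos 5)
Your proposal is correct and matches the paper's own argument exactly: the paper likewise observes that Lemma \ref{lemm:tech1} 1) furnishes Assumptions \ref{hypo4} for $(r_\lambda,K_\lambda)$ at each fixed $\lambda>-\underline{D}$ and then states that the lemma is a direct consequence of Theorem \ref{prop:eigenintegro}. Your remark that $\epsilon_0,c_0$ need not be uniform in $\lambda$ is a reasonable (and harmless) extra precaution.
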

\begin{proof}
The proof of this theorem is a direct  consequence of Theorem \ref{prop:eigenintegro}. 
\end{proof}
 The following assumption allows us to characterise the Malthusian parameter associated with the linear dynamics.
\begin{hypo}\label{hypo3}
There is $\underline{\lambda}>-\underline{D}$ such that $\rho(\tilde{r}_{\underline{\lambda}}+\tilde{J}_{\underline{\lambda}})>1$.
\end{hypo}
\begin{prop}\label{prop:malthus}
Assume Assumptions \ref{hypo1} and \ref{hypo3}. The map $\lambda\in\left[\underline{\lambda},+\infty\right[\longmapsto \rho(\tilde{r}_\lambda +\tilde{J}_\lambda)$ is continuous and (strictly) decreasing. There exists a unique $\lambda^{*}\in \left[\underline{\lambda}, +\infty\right[$ such that $\rho(\tilde{r}_{\lambda^{*}}+\tilde{J}_{\lambda^{*}})=1$.
\end{prop}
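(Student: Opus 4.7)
The plan is to prove three things in order: strict monotonicity of $\lambda\mapsto\rho(\tilde r_\lambda+\tilde J_\lambda)$, continuity of this map, and an intermediate-value argument yielding the unique $\lambda^*$. Throughout I would exploit Lemma \ref{prop:eigenintegroparam}(v), which allows me to move between the operators on $C(\cS)$ and on $\mathcal{M}(\cS)$ without changing the spectral radius.

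For strict monotonicity, take $\underline{\lambda}\leq\lambda_1<\lambda_2$. Lemma \ref{lemm:tech1} 2(i) gives $r_{\lambda_1}>r_{\lambda_2}$ and $K_{\lambda_1}\geq K_{\lambda_2}$ pointwise; by continuity on the compact set $\cS$, $\delta:=\inf_{\cS}(r_{\lambda_1}-r_{\lambda_2})>0$. Let $\mu_{\lambda_2}\in\mathcal{M}^{+}(\cS)$ be the principal eigenmeasure of $\tilde r_{\lambda_2}+\tilde J_{\lambda_2}$ at its spectral radius $\rho_2$ provided by Lemma \ref{prop:eigenintegroparam}. Then one has the measure inequality
\[
(\tilde r_{\lambda_1}+\tilde J_{\lambda_1})\mu_{\lambda_2}\geq(\tilde r_{\lambda_2}+\tilde J_{\lambda_2})\mu_{\lambda_2}+(r_{\lambda_1}-r_{\lambda_2})\mu_{\lambda_2}\geq(\rho_2+\delta)\mu_{\lambda_2}.
\]
Iterating this using positivity of $\tilde r_{\lambda_1}+\tilde J_{\lambda_1}$ gives $(\tilde r_{\lambda_1}+\tilde J_{\lambda_1})^n\mu_{\lambda_2}\geq(\rho_2+\delta)^n\mu_{\lambda_2}$, so that by taking total variations and applying Gelfand's formula $\rho(T)=\lim_n\|T^n\|^{1/n}$ one obtains $\rho(\tilde r_{\lambda_1}+\tilde J_{\lambda_1})\geq\rho_2+\delta>\rho_2$.

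For continuity I would first observe that, by Lemma \ref{lemm:tech1} 2(ii), $\|r_\lambda-r_{\lambda_0}\|_\infty\to 0$ and $\|K_\lambda-K_{\lambda_0}\|_\infty\to 0$ as $\lambda\to\lambda_0$, so $r_\lambda+J_\lambda\to r_{\lambda_0}+J_{\lambda_0}$ in operator norm on $C(\cS)$. Upper semi-continuity of the spectral radius (Lemma \ref{lemm:uppersemi}) combined with strict monotonicity immediately gives left-continuity: as $\lambda\uparrow\lambda_0$, $\rho(r_\lambda+J_\lambda)$ decreases to some $\ell\geq\rho(r_{\lambda_0}+J_{\lambda_0})$, while upper semi-continuity forces $\ell\leq\rho(r_{\lambda_0}+J_{\lambda_0})$. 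For right-continuity I would split on whether $\rho(r_{\lambda_0}+J_{\lambda_0})>\overline{r}_{\lambda_0}$ or $\rho(r_{\lambda_0}+J_{\lambda_0})=\overline{r}_{\lambda_0}$. In the first case the principal eigenvalue is isolated of finite multiplicity (Propositions \ref{prop:browder} and \ref{prop:schaefermain}(ii)), and the standard norm-perturbation theory for isolated parts of the spectrum gives continuity in a neighbourhood of $\lambda_0$. In the second case a squeeze suffices: for $\lambda>\lambda_0$,
\[
\overline{r}_\lambda\leq\rho(r_\lambda+J_\lambda)<\rho(r_{\lambda_0}+J_{\lambda_0})=\overline{r}_{\lambda_0}
\]
by Lemma \ref{prop:eigenintegroparam}(i) and strict monotonicity, and $\overline{r}_\lambda\to\overline{r}_{\lambda_0}$ by Lemma \ref{lemm:tech1} 2(iii), so $\rho(r_\lambda+J_\lambda)\to\rho(r_{\lambda_0}+J_{\lambda_0})$.

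Uniqueness of $\lambda^*$ is then immediate from strict monotonicity. For existence, Assumption \ref{hypo3} provides $\rho(\tilde r_{\underline{\lambda}}+\tilde J_{\underline{\lambda}})>1$, while the uniform bound $r_\lambda(x)\leq(1-p)\|B\|_\infty/(\underline{D}+\lambda)$ together with an analogous estimate for $K_\lambda$ (both obtained by dominated convergence) gives $\|r_\lambda+J_\lambda\|_{\mathrm{op}}\to 0$, hence $\rho(\tilde r_\lambda+\tilde J_\lambda)\to 0$ as $\lambda\to+\infty$; the intermediate value theorem then produces the required $\lambda^{*}$. The main obstacle is the right-continuity of the spectral radius at parameters $\lambda_0$ where $\rho(r_{\lambda_0}+J_{\lambda_0})=\overline{r}_{\lambda_0}$, since there the principal value is no longer isolated and generic norm-perturbation tools break down; this is precisely why I rely on the squeeze against the essential spectral radius in that regime.
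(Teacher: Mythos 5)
Your proposal is correct, and the continuity part follows essentially the paper's route (Kato perturbation when $\rho(r_{\lambda_0}+J_{\lambda_0})>\overline{r}_{\lambda_0}$, a squeeze against the essential spectral radius $\overline{r}_\lambda$ otherwise — the paper phrases the latter as a contradiction via Lemma \ref{lemm:uppersemi}, but it is the same estimate). Where you genuinely diverge is the strict monotonicity. The paper first gets the non-strict inequality from the abstract monotonicity of the spectral radius (Proposition \ref{annex:montoto} (i)) and then obtains strictness by a case analysis: when $\rho(r_{\lambda_1}+J_{\lambda_1})>\overline{r}_{\lambda_1}$ it invokes Proposition \ref{annex:montoto} (ii) (equality of spectral radii plus a pole of the resolvent forces equality of the operators), and when $\rho=\overline{r}_{\lambda_1}$ it falls back on the strict decrease of $\lambda\mapsto\overline{r}_\lambda$. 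Your argument — testing $\tilde r_{\lambda_1}+\tilde J_{\lambda_1}$ against the principal eigenmeasure $\mu_{\lambda_2}$, iterating the inequality $(\tilde r_{\lambda_1}+\tilde J_{\lambda_1})\mu_{\lambda_2}\geq(\rho_2+\delta)\mu_{\lambda_2}$ and applying Gelfand's formula — bypasses both the pole-of-resolvent machinery and the case distinction, and it yields the quantitative gap $\rho(\tilde r_{\lambda_1}+\tilde J_{\lambda_1})-\rho(\tilde r_{\lambda_2}+\tilde J_{\lambda_2})\geq\inf_{\cS}(r_{\lambda_1}-r_{\lambda_2})$ for free; its only input is the existence of the positive eigenmeasure from Lemma \ref{prop:eigenintegroparam} and the positivity of the operators on $(\mathcal{M}(\cS),\Vert\cdot\Vert_{\mathrm{TV}})$. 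You are also slightly more complete than the paper at the end: the paper concludes by the intermediate value theorem without recording that $\rho(\tilde r_\lambda+\tilde J_\lambda)\to 0$ as $\lambda\to+\infty$, whereas you supply the needed operator-norm bound explicitly. The one place you are terser than the paper is the Kato step: the paper takes care to show that the perturbed eigenvalue $\kappa_\lambda$ is actually the spectral radius of $r_\lambda+J_\lambda$ (via convergence of the spectral projectors, positivity of $P_\lambda u$ and Proposition \ref{annex:spectral} (i)); in your setting the cheap fix is to note that for $\lambda>\lambda_0$ monotonicity already gives the upper bound $\rho(r_\lambda+J_\lambda)\leq\rho(r_{\lambda_0}+J_{\lambda_0})$, while $\rho(r_\lambda+J_\lambda)\geq\vert\kappa_\lambda\vert\to\rho(r_{\lambda_0}+J_{\lambda_0})$ gives the lower bound, so you do not actually need $\kappa_\lambda$ to be the spectral radius.
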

\begin{proof}[Proof of Proposition \ref{prop:malthus}]
First, remark that by Lemma \ref{prop:eigenintegroparam} (v), we have for any $\lambda\in\left[\underline{\lambda},+\infty\right[$, $\rho(\tilde{r}_\lambda +\tilde{J}_\lambda)=\rho(r_\lambda +J_\lambda)$.
We divide the proof in three steps.
\\
\textbf{Step 1: the map $\lambda\in\left[\underline{\lambda}, +\infty\right[\longmapsto \rho(r_\lambda +J_{\lambda})$ is non-increasing.}
Let $\underline{\lambda}\leq\lambda_1\leq \lambda_2$. By Lemma \ref{lemm:tech1} 2)(i), we get that for all $f\in C^{+}(\cS)$, 
\begin{equation}
(r_{\lambda_2}+J_{\lambda_2})f(x)\leq (r_{\lambda_1}+J_{\lambda_1})f(x).
\end{equation}
By Proposition \ref{annex:montoto} (i) in Appendix, the spectral radius is monotone on the set of positive operators, we conclude that $\rho(r_{\lambda_2}+J_{\lambda_2})\leq \rho(r_{\lambda_1}+J_{\lambda_1})$.
\\
\textbf{Step 2: The map $\lambda\in\left[\underline{\lambda}, +\infty\right[\longmapsto \rho(r_\lambda +J_{\lambda})$ is continuous.} Let $\lambda_0\geq \underline{\lambda}$. We consider the two possible cases. First, we assume that $\overline{r}_{\lambda_0}<\rho(r_{\lambda_0}+J_{\lambda_0})$. We deduce by Lemma \ref{prop:eigenintegroparam} (ii) that :  $\rho(r_{\lambda_0}+J_{\lambda_0})$ is an eigenvalue of $r_{\lambda_0} +J_{\lambda_0}$ with algebraic multiplicity equals to one. On the other hand, for all $\lambda\geq \underline{\lambda}$ and $f\in C(\cS)$, we have
\begin{equation}\label{proo:inegnorm}
\Vert (r_{\lambda}+J_{\lambda})f - (r_{\lambda_0}+J_{\lambda_0})f\Vert_\infty\leq (\Vert r_{\lambda}-r_{\lambda_0}\Vert_{\infty}+\text{Leb}(\cS)\Vert K_{\lambda}-K_{\lambda_0}\Vert_{\infty})\Vert f\Vert_{\infty}.
\end{equation}
Therefore, we deduce from Proposition \ref{annex:kato} that: \textbf{(a)} there is $\delta>0$ such that if $\vert\lambda -\lambda_0\vert <\delta$, there exists an eigenvalue $\kappa_\lambda$  of $r_\lambda +J_\lambda$ with algebraic multiplicity equals to one; \textbf{(b)}  $P_\lambda \longrightarrow P$ as $\lambda\rightarrow \lambda_0$ for the operator norm where $P$, $P_\lambda$ represent respectively the projector  on the null space of $\rho(r_{\lambda_0}+J_{\lambda_0})I - r_{\lambda_0}+J_{\lambda_0}$ and $\kappa_\lambda I - r_{\lambda}+J_{\lambda}$. Let $u$ be a positive eigenfunction of $r_{\lambda_0} +J_{\lambda_0}$ associated with $\rho(r_{\lambda_0}+J_{\lambda_0})$. By \textbf{(b)}, we have $P_\lambda u\rightarrow Pu = u$ when $\lambda\rightarrow\lambda_0$. In particular, we deduce that there is $0<\delta'<\delta$ such that if $\vert \lambda -\lambda_0\vert <\delta'$, $P_\lambda u$ is a positive eigenfunction of $r_\lambda + J_\lambda$ associated with $\kappa_\lambda$. Hence, Proposition \ref{annex:spectral} (i) gives that if $\vert\lambda -\lambda_0\vert<\delta'$, $\rho(r_\lambda+J_\lambda)=\kappa_\lambda$. In order to conclude, let $(\lambda_j)$ be a sequence of $\left[\underline{\lambda},+\infty\right[$ which converges to $\lambda_0$. Since the function $\lambda\in \left[\underline{\lambda},+\infty\right[\rightarrow\rho(r_\lambda +J_\lambda)$ is bounded, there exists $\rho^{*}\in \left[0, +\infty\right[$ and a subsequence always denoted $(\lambda_j)$ such that $\rho(r_{\lambda_j}+J_{\lambda_j})\rightarrow \rho^{*}$. We check that $\rho^{*}=\rho(r_{\lambda_0
}+J_{\lambda_0})$. Let $u$ be a positive eigenfunction of $r_{\lambda_0}+J_{\lambda_0}$. For all $j$ sufficiently large, we have $(r_{\lambda_j}+J_{\lambda_j})P_{\lambda_j} u=\rho(r_{\lambda_j} +J_{\lambda_j}) P_{\lambda_j} u$.  By (\ref{proo:inegnorm})  and \textbf{(b)}, and taking the limit $j\rightarrow\infty$, we deduce that $(r_{\lambda_0}+J_{\lambda_0})u =\rho^{*} u$. So, $u$ is a positive eigenfunction associated with the eigenvalue $\rho^{*}$, it comes from Proposition \ref{annex:spectral} (i) that $\rho^{*}=\rho(r_{\lambda_0}+J_{\lambda_0})$. Consider now the case where $\overline{r}_{\lambda_0}=\rho(r_{\lambda_0}+J_{\lambda_0})$. Assume that the map $\lambda\in\left[\underline{\lambda},+\infty\right[\longmapsto\rho(r_{\lambda}+J_{\lambda})$ is not continuous at $\lambda_0$. So we have $\liminf_{\lambda\rightarrow\lambda_0}\rho(r_{\lambda}+J_{\lambda})<\limsup_{\lambda\rightarrow\lambda_0}\rho(r_{\lambda_0}+J_{\lambda_0})$.  Since the spectral radius is upper semi-continuous (see Lemma \ref{lemm:uppersemi} in Appendix A), we deduce that 
\begin{equation*}
\overline{r}_{\lambda_0}=\liminf_{\lambda\rightarrow\lambda_0}\rho_e(r_{\lambda}+J_{\lambda})\leq\liminf_{\lambda\rightarrow\lambda_0}\rho(r_{\lambda}+J_{\lambda})<\rho(r_{\lambda_0}+J_{\lambda_0})=\overline{r}_{\lambda_0}
\end{equation*}
which is absurd and that concludes the proof of the continuity.
\\\textbf{Step 3: strong monotonicity.} Let $\underline{\lambda}\leq \lambda_1\leq\lambda_2$. By the first part of this proof, we have $\rho(r_{\lambda_1}+J_{\lambda_1})\geq \rho(r_{\lambda_2}+J_{\lambda_2})$. Assume that $\rho(r_{\lambda_1}+J_{\lambda_1})= \rho(r_{\lambda_2}+J_{\lambda_2})$. We show that necessarily $\lambda_1=\lambda_2$. As before, we distinguish two cases. First we consider the case where $\rho(r_{\lambda_1}+J_{\lambda_1})>\overline{r}_{\lambda_1}$. Since $r_{\lambda_1}+J_{\lambda_1}$ is irreducible and $\rho(r_{\lambda_1}+J_{\lambda_1})$ is a pole of the resolvent of $r_{\lambda_1}+J_{\lambda_1}$, we deduce by Proposition \ref{annex:montoto} (ii) that $r_{\lambda_1}+J_{\lambda_1}=r_{\lambda_2}+J_{\lambda_2}$ and so $\lambda_1=\lambda_2$. We now consider the second case $\rho(r_{\lambda_1}+J_{\lambda_1})=\overline{r}_{\lambda_1}$. Assume that $\lambda_1<\lambda_2$. If there exists $\tilde{\lambda}\in\left[\lambda_1,\lambda_2\right]$ such that $\rho(r_{\tilde{\lambda}} +J_{\tilde{\lambda}})>\overline{r}_{\tilde{\lambda}}$ we conclude that $\rho(r_{\lambda_1} +J_{\lambda_1})\geq \rho(r_{\tilde{\lambda}}+J_{\tilde{\lambda}})>\rho(r_{\lambda_2}+J_{\lambda_2})$ by the previous part of the proof. Otherwise, we conclude that $\rho(r_{\lambda_{1}}+J_{\lambda_{1}})>\rho(r_{\lambda_2}+J_{\lambda_2})$ arguing that $\lambda\in\left[\underline{\lambda},+\infty\right[\mapsto \overline{r}_{\lambda}$ is decreasing (see Lemma \ref{lemm:tech1} 2)(iii)). We deduce that $\lambda_1=\lambda_2$. 
\\ Since $\rho(r_{\underline{\lambda}}+J_{\underline{\lambda}})>\rho_e(r_{\underline{\lambda}}+J_{\underline{\lambda}})=\overline{r}_{\underline{\lambda}}$, Assumption \ref{hypo3} gives that $\rho(r_{\underline{\lambda}}+J_{\underline{\lambda}})>1$ and we conclude by the intermediate value theorem. 
\end{proof}
The two following theorems give the existence of principal real eigenelements associated with the linear dynamics.
\begin{prop}\label{theo:main}
Assume Assumptions \ref{hypo1} and \ref{hypo3}. 
\begin{itemize}
\item[(i)] Let $\lambda \in\left[\underline{\lambda},\infty\right[.$ A non-zero measure $N\in \mathcal{M}^{+}(\cS\times\R_+)$ is a solution of
\begin{equation}\label{eq:eigenmeasure}
\forall f\in C_b^{0,1},\quad \int_{\cS\times\R_+}\left(\partial_a f-(D+\lambda)f+\mathcal{G}\left[f\right]\right)(x,a)N(dx,da)=0
\end{equation}
if and only if
\begin{equation*}
N(dx,da)=\mu_{\lambda}(dx) R_{\lambda}(x,a)da,
\end{equation*}
where $\mu_{\lambda}\in\mathcal{M}^{+}(\cS)$ is non-zero and satisfies $(\tilde{r}_{\lambda}+\tilde{J}_{\lambda})\mu_{\lambda}=\mu_{\lambda}$. 
\item[(ii)] The largest $\lambda\in\left[\underline{\lambda},+\infty\right[$ such that there exists a non-zero measure $N\in \mathcal {M}^{+}(\cS\times\R_+)$ which satisfies (\ref{eq:eigenmeasure}) is the unique solution of $\rho(\tilde{r}_{\lambda}+\tilde{J}_{\lambda})=1$.
\end{itemize}
\end{prop}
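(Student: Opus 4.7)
The plan is to reduce the measure-valued eigenvalue problem on $\cS\times\R_+$ to the eigenvalue problem for $\tilde{r}_\lambda+\tilde{J}_\lambda$ on $\cS$ by integrating out the age variable along the characteristic $R_\lambda$. For the reverse direction $(\Leftarrow)$ in (i), I plug $N(dx,da)=\mu_\lambda(dx)R_\lambda(x,a)\,da$ into the weak equation. Using $\partial_a R_\lambda(x,a)=-(D(x,a)+\lambda)R_\lambda(x,a)$, $R_\lambda(x,0)=1$, and $R_\lambda(x,a)\to 0$ as $a\to\infty$ (which holds since $D\geq\underline{D}$), an integration by parts in $a$ collapses $\int(\partial_a f-(D+\lambda)f)\,dN$ into the boundary term $-\int_\cS f(x,0)\,\mu_\lambda(dx)$; the $\mathcal{G}[f]$ term, evaluated against $R_\lambda$, becomes $\int_\cS f(x,0)\,d(\tilde{r}_\lambda+\tilde{J}_\lambda)\mu_\lambda$ after Fubini and the definitions (\ref{defifonction}). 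The two contributions cancel by the eigenvalue relation.

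For the forward direction $(\Rightarrow)$ of (i), the real work happens and I proceed in three steps. First, I disintegrate $N=\mu(dx)\,N_x(da)$ along its trait marginal $\mu$, and test against product functions $f(x,a)=g(x)\psi(a)$ with $g\in C(\cS)$ and $\psi\in C^1_c((0,\infty))$ extended by $0$ to $\R_+$ (so $f\in C_b^{0,1}$ and $\mathcal{G}[f]\equiv 0$ since $\psi(0)=0$). Ranging $g$ over a countable dense family in $C(\cS)$, the weak equation yields for $\mu$-a.e.\ $x$ the distributional ODE $\partial_a N_x=-(D(x,\cdot)+\lambda)N_x$ on $(0,\infty)$, whose only non-negative Radon solutions are scalar multiples of $R_\lambda(x,a)\,da$ (any singular part would contribute derivatives of Diracs, which cannot balance the multiplicative right-hand side). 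Hence $N|_{\cS\times(0,\infty)}=\mu_\lambda(dx)R_\lambda(x,a)\,da$ for some $\mu_\lambda\in\mathcal{M}^+(\cS)$. Second, I write $N=\mu_\lambda(dx)R_\lambda(x,a)\,da+N_0(dx)\delta_0(da)$ and test with $f(x,a)=g(x)\psi(a)$ where $\psi(0)=0$ and $\psi'(0)=1$ (e.g., $\psi(a)=ae^{-a}$): the absolutely continuous piece contributes a multiple of $\psi(0)=0$, while the Dirac piece contributes $\int_\cS g\,dN_0$; varying $g$ forces $N_0=0$. Third, testing with $\psi\in C_b^1(\R_+)$ satisfying $\psi(0)=1$ and reusing the computation from the $\Leftarrow$ direction produces $\int_\cS g\,d\bigl((\tilde{r}_\lambda+\tilde{J}_\lambda)\mu_\lambda-\mu_\lambda\bigr)=0$ for every $g\in C(\cS)$, i.e.\ $(\tilde{r}_\lambda+\tilde{J}_\lambda)\mu_\lambda=\mu_\lambda$.

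For (ii), part (i) reduces the existence of a non-zero $N\in\mathcal{M}^+(\cS\times\R_+)$ solving (\ref{eq:eigenmeasure}) to the existence of a non-zero non-negative eigenvector of $\tilde{r}_\lambda+\tilde{J}_\lambda$ with eigenvalue $1$. When $\rho(\tilde{r}_\lambda+\tilde{J}_\lambda)=1$ such an eigenvector is provided by Lemma \ref{prop:eigenintegroparam}. For $\lambda$ strictly larger than the unique solution $\lambda^*$ of $\rho(\tilde{r}_\lambda+\tilde{J}_\lambda)=1$ furnished by Proposition \ref{prop:malthus}, the strict decrease there gives $\rho(\tilde{r}_\lambda+\tilde{J}_\lambda)<1$, so $1$ lies outside the spectrum of $\tilde{r}_\lambda+\tilde{J}_\lambda$ and no such eigenvector can exist.

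The main obstacle is the first step of $(\Rightarrow)$: passing from the weak identity (tested against products $g(x)\psi(a)$) to the conclusion that $N_x$ is absolutely continuous with density $c(x)R_\lambda(x,\cdot)$. This requires handling the measurable-in-$x$ disintegration via a countable dense family of $g$'s, cleanly ruling out singular components of $N_x$ on $(0,\infty)$, and verifying that $\mu_\lambda(dx):=c(x)\mu(dx)$ defines a genuine finite Radon measure on $\cS$ — together with checking that the formulas used in the second and third steps extend to bounded $\psi$ without extra boundary contributions at infinity, a point that hinges on the exponential decay of $R_\lambda$.
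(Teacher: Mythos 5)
Your proposal is correct and arrives at the same structural reduction as the paper: both directions of (i) rest on integrating the age--transport equation so that everything collapses onto the fixed-point equation $(\tilde{r}_{\lambda}+\tilde{J}_{\lambda})\mu_{\lambda}=\mu_{\lambda}$ for the trait marginal, and your treatment of (ii) is exactly the paper's (eigenvalue $1$ forces $\rho(\tilde{r}_{\lambda}+\tilde{J}_{\lambda})\geq 1$, then the strict monotonicity of Proposition \ref{prop:malthus} and the existence of an eigenmeasure at $\lambda^{*}$ from Lemma \ref{prop:eigenintegroparam}). The execution of the forward direction of (i) genuinely differs, though. The paper also disintegrates $N=\nu(dx)u(x,da)$, but then divides by $R_{\lambda}$, computes the distributional derivative $\partial_a\bigl(\mathds{1}_{\cS\times\R_+}N/R_{\lambda}\bigr)$ in one stroke from the weak formulation (obtaining a multiple of $\delta_0(da)$), and integrates using the fact that primitives of the zero distribution are constants, a support argument killing the constant; this single computation simultaneously produces the age profile, the absence of an atom at $a=0$, and the renewal condition. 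You instead solve the ODE fibre by fibre on $\left]0,+\infty\right[$ with product test functions vanishing at $a=0$, kill the atom at $a=0$ with $\psi(0)=0$, $\psi'(0)=1$, and recover the trait equation with $\psi(0)=1$; this is more elementary and avoids the extension to $\R^d\times\R$, at the cost of the countable-dense-family and measurability bookkeeping you correctly flag. One spot to tighten: your reason for excluding singular parts of $N_x$ on $\left]0,+\infty\right[$ (``derivatives of Diracs'') does not cover continuous singular parts such as Cantor-type measures. The clean argument is that a Radon measure $T$ whose distributional derivative $T'=-(D(x,\cdot)+\lambda)T$ is again a Radon measure must be absolutely continuous with a density of locally bounded variation; the equation then forces the jump and Cantor parts of that density's derivative to vanish, so the ODE holds classically and $T=cR_{\lambda}(x,a)da$. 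With that fix, and noting that $c(x)$ is bounded because $\int_0^{\infty}R_{\lambda}(x,a)da$ is bounded away from zero, the proof is complete.
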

\begin{proof}[Proof of Proposition \ref{theo:main}]
(i): Let $\lambda\in\left[\underline{\lambda},+\infty\right[$ and let $N\in \mathcal{M}^{+}(\cS\times\R_+)$ be a non-trivial solution of (\ref{eq:eigenmeasure}). We decompose $N$ as
\begin{equation*}
N(dx,da)=\nu(dx)u(x,da).
\end{equation*}
where $\nu\in\mathcal{M}^{+}(\cS)$  and $u(x,da)$ be an associated transition measure. We extend these quantities to the whole set of the following way. Let $\tilde{\nu}\in \mathcal{M}^{+}(\R^d)$ defined for all Borel subset A of $\R^d$ by $\tilde{\nu}(A)=\nu(A\cap \cS)$; let $\tilde{u}(x,da)$ defined by $\tilde{u}(x,da)=0$ if $x\notin \cS$ and $
\tilde{u}(x,A)=u(x,A\cap \R_+)$ if $x\in\cS$ and $A$ is a Borel subset of $\R$. We define $\tilde{N}(dx,da)=\tilde{\nu}(dx)\tilde{u}(x,da)\in \mathcal{M}^{+}(\R^d\times\R)$. We extend continuously the functions $B,D$ and $k$ to the whole sets $\R^d\times\R$, $\R^d\times\R$ and $\R^d\times\R^d\times\R$ respectively and we denote by $\tilde{B}$, $\tilde{D}$ and $\tilde{k}$ their extensions. We define 
\[\tilde{V}=\frac{\mathds{1}_{\cS\times\R_+}(x,a)}{\tilde{R}_{\lambda}(x,a)}\tilde{N}\in \mathcal{M}^{+}_{\text{loc}}(\R^d\times\R)\]
where $\tilde{R}_{\lambda}(x,a)\in C(\R^d\times\R)$ is defined similarly as $R_\lambda(x,a)$. Now we compute the distributional partial derivatives $\partial_a \tilde{V}$. Let $f\in C_c^{\infty}(\R^d\times\R)$,
\begin{align*}
\int_{\R^d\times\R}\partial_a f(x,a) \tilde{V}(dx,da) &=\int_{\cS\times\R_+}\left(\partial_a \frac{f}{R_{\lambda}}-f\frac{D+\lambda}{R_{\lambda}}\right)(x,a)N(dx,da)\\
&=-\int_{\cS\times\R_+}\cG\left[\frac{f}{R_\lambda}\right](x,a)N(dx,da)\\
&=-\int_{\R^d}f(x,0)(\tilde{P}(x)\tilde{\nu}(dx) +\mathds{1}_{\cS}(x)\tilde{Q}(x)dx)
\end{align*}
where we introduced
\begin{equation*}
\begin{cases}
\tilde{P}(x)=(1-p)\int_{\R}\tilde{B}(x,a)\tilde{u}(x,da)\\
\tilde{Q}(x)=p\int_{\R^d\times\R}\tilde{B}(y,a)\tilde{k}(y,a,x)\tilde{u}(y,da)\tilde{\nu}(dy).
\end{cases}
\end{equation*}
It comes that $\partial_a \tilde{V}=(\tilde{P}\tilde{\nu}+\mathds{1}_{\cS}\tilde{Q}dx)\delta_0(da)$. Since the primitives of the zero distribution are constant functions, we deduce that there exists a distribution $T\in \mathcal{D}'(\R^d)$ such that 
\begin{equation*}
\tilde{V}(dx,da)=(\tilde{P}\tilde{\nu}+\mathds{1}_{\cS}\tilde{Q}dx)\mathds{1}_{\R_+}(a)da + T(x).
\end{equation*}
Since the support of $\tilde{V}$ is a subset of $\cS\times\R_+$, we get that $T=0$ and finally 
\begin{equation}\label{eq:proo3}
N(dx,da)=(P(x)\nu(dx)+Q(x)dx)R_{\lambda}(x,a)da
\end{equation}
where $P:=\tilde{P}_{\vert \cS}$ and $Q:=\tilde{Q}_{\vert \cS}$. By (\ref{eq:proo3}), we obtain:
\begin{align}\label{pr:3}
P(x)\nu(dx)=(P(x)\nu(dx)+Q(x)dx)r_{\lambda}(x),
\end{align}
and
\begin{align}\label{pr:4}
Q(x)dx= \tilde{J}_{\lambda}(P\nu +Qdy)(dx).
\end{align}
Denoting $\mu:=P(x)\nu(dx)+Q(x)dx$, we get by (\ref{pr:3}) and (\ref{pr:4}) that $\mu =(\tilde{r}_{\lambda}+\tilde{J}_{\lambda})\mu$. Finally, it comes  by (\ref{eq:proo3}) that 
\begin{equation*}
N(dx,da)=\mu(dx)R_\lambda(x,a)da.
\end{equation*}
Reciprocally, it is easy to check that such a measure is solution of (\ref{eq:eigenmeasure}). 
\\(ii): Let $\lambda\in\left[\underline{\lambda},+\infty\right[$ and assume that there exists a non-zero $ N\in \mathcal{M}^{+}(\cS\times\R_+)$ which satisfies (\ref{eq:eigenmeasure}). By (i), we deduce that there is $\mu\in\mathcal{M}^{+}(\cS)$ such that $N=\mu(dx)R_{\lambda}(x,a)$ and which satisfies $(\tilde{r}_{\lambda}+\tilde{J}_{\lambda})\mu = \mu$. We deduce that $1\leq \rho(\tilde{r}_{\lambda}+\tilde{J}_{\lambda})$. Moreover, if $\rho(\tilde{r}_\lambda + \tilde{J}_\lambda)=1$ there exists $\nu \in\mathcal{M}^{+}(\cS)$ such that $(\tilde{r}_{\lambda}+\tilde{J}_{\lambda})\nu = \nu$ by Lemma \ref{prop:eigenintegroparam}. That concludes the proof.
\end{proof}
\begin{prop}\label{theo:maindual}
Assume Assumptions \ref{hypo1} and \ref{hypo3}.
\begin{itemize}
\item[(i)]Let $\lambda\in\left[\underline{\lambda},+\infty\right[$. A non-zero measure $\psi\in\mathcal{M}_{\text{loc}}^{+}(\cS\times\R_+)$ such that 
\begin{equation*}
\psi(dx,da)=\varphi(dx)m(x,a)da, \quad m(x,.)\in L^{\infty}(\R_+), \varphi(dx)\in\mathcal{M}^{+}(\cS)
\end{equation*}
is a solution of
\begin{align}\label{eigenmeasuredual}
\forall f \in C_c^{0,1},&\quad \int_{\cS\times\R_+}(\partial_a f+(D+\lambda)f)(x,a)\psi(dx,da)\\
&=\int_{\cS\times\R_+}f(x,a)\tilde{\cG}\left[\psi\right](dx,da)-\int_{\cS}m(x,0)f(x,0)\varphi(dx),\nonumber
\end{align}
where
\begin{equation*}
\tilde{\cG}\left[\psi\right]=B(x,a)\left((1-p)\varphi(dx)m(x,0)+p\int_{\cS}\varphi(dy)m(y,0)k(x,a,y)\right)da,
\end{equation*}
if and only if
\begin{align*}
\psi(dx,da)&=\frac{da}{R_{\lambda}(x,a)}\left((1-p)\eta_{\lambda}(dx)\int_{a}^{+\infty}B(x,\alpha)R_{\lambda}(x,\alpha)d\alpha\right. \\
&\left.+p\int_{\cS}\eta_{\lambda}(dy)\int_{a}^{+\infty}B(x,\alpha)R_{\lambda}(x,\alpha)k(x,\alpha,y)d\alpha\right),
\end{align*}
where $\eta_{\lambda}\in\mathcal{M}^{+}(\cS)$ is non-zero and satisfies $(\tilde{r}_{\lambda}+\tilde{G}_{\lambda})\eta_{\lambda}=\eta_{\lambda}$. 
\item[(ii)] The largest $\lambda\in\left[\underline{\lambda},+\infty\right[$ such that there exists a non-zero $ \psi\in \mathcal{M}^{+}_{\text{loc}}(\cS\times\R_+)$ which satisfies (\ref{eigenmeasuredual}) is the unique solution of $\rho(\tilde{r}_\lambda +\tilde{J}_\lambda)=1$.
\end{itemize}
\end{prop}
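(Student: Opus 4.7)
The approach will mirror the proof of Proposition \ref{theo:main}, with the operator $\tilde{r}_\lambda + \tilde{G}_\lambda$ replacing $\tilde{r}_\lambda + \tilde{J}_\lambda$; the key structural identification to establish is that the $a = 0$ trace $\eta_\lambda(dx) := m(x,0)\varphi(dx)$ is a fixed point of $\tilde{r}_\lambda + \tilde{G}_\lambda$. Part (ii) will then follow from part (i) combined with Lemma \ref{prop:eigenintegroparam}(v) (which gives $\rho(\tilde{r}_\lambda + \tilde{G}_\lambda) = \rho(\tilde{r}_\lambda + \tilde{J}_\lambda)$) and the monotonicity/continuity argument already carried out in Proposition \ref{prop:malthus}.

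For the ``if'' direction of (i), I plan to substitute the proposed expression of $\psi$ directly into (\ref{eigenmeasuredual}). Using $\partial_a R_\lambda = -(D + \lambda) R_\lambda$, the $a$-derivative of $R_\lambda(x,a)^{-1}\int_a^\infty B(x,\alpha)R_\lambda(x,\alpha)\,d\alpha$ reduces to $(D + \lambda)$ times itself minus $B(x,a)$, and an analogous computation applies to the mixing term involving $k$. After integration by parts in $a$, every interior contribution matches the explicit form of $\tilde{\mathcal{G}}[\psi]$, while the boundary contribution at $a = 0$ yields exactly $\int f(x,0)(\tilde{r}_\lambda + \tilde{G}_\lambda)\eta_\lambda(dx)$, which by the assumed fixed-point relation equals the boundary term $\int f(x,0) m(x,0)\varphi(dx)$ on the right-hand side of (\ref{eigenmeasuredual}).

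For the ``only if'' direction, my plan is to view $a \mapsto \Psi_a(dx) := m(x,a)\varphi(dx)$ as a measure-valued function of $a$ and, by testing (\ref{eigenmeasuredual}) against separated test functions $f(x,a) = g(x)h(a)$, to derive the distributional identity
\[
-\partial_a \Psi_a + (D(\cdot,a) + \lambda)\Psi_a = \tilde{\mathcal{G}}[\psi]_a
\]
with values in Radon measures on $\cS$, where the boundary contributions at $a = 0$ cancel on both sides by construction of (\ref{eigenmeasuredual}). Multiplying by $R_\lambda(x,a)$ will produce $\partial_a (R_\lambda \Psi_a) = -R_\lambda \tilde{\mathcal{G}}[\psi]_a$; integrating from $a$ to $+\infty$, and fixing the integration constant by the decay of $R_\lambda \Psi_a$ (which follows from $R_\lambda(x,a) \leq e^{-(\underline{D}+\lambda)a}$ together with the essential boundedness of $m(x,\cdot)$), I obtain exactly the announced formula for $\psi$ with $\eta_\lambda := \Psi_0$. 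Setting $a = 0$ in this formula and comparing with the definitions of $r_\lambda$ and $K_\lambda$ in (\ref{defifonction}) immediately yields $(\tilde{r}_\lambda + \tilde{G}_\lambda)\eta_\lambda = \eta_\lambda$. Part (ii) then follows: the existence of a nontrivial $\psi$ is equivalent to $1$ being an eigenvalue of $\tilde{r}_\lambda + \tilde{G}_\lambda$, and Lemma \ref{prop:eigenintegroparam}(v) together with Proposition \ref{prop:malthus} pins down the largest such $\lambda$ as the unique solution of $\rho(\tilde{r}_\lambda + \tilde{J}_\lambda) = 1$.

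The main obstacle I anticipate is the rigorous handling of the measure-valued ODE when $\varphi$ (equivalently $\eta_\lambda$) has both an absolutely continuous and a singular part with respect to Lebesgue measure on $\cS$: the source $\tilde{\mathcal{G}}[\psi]_a$ splits into a $\varphi(dx)$-piece (from the $(1-p)$ contribution) and a $dx$-piece (from the $p$-integral against $k$), so the displayed identity has to hold as equality of Radon measures for almost every $a$, which I plan to address by Lebesgue-decomposing $\Psi_a$ against the reference measure $\eta_\lambda + \mathrm{Leb}$ and solving the resulting pair of scalar ODEs component by component, together with the decay justification at $a = +\infty$ needed to fix the integration constant.
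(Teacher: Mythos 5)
Your proposal is correct and follows essentially the same route as the paper: the paper also multiplies by $R_\lambda$ (forming $\tilde{U}=\mathds{1}_{\cS\times\R_+}R_\lambda\tilde{\psi}$), computes the distributional $a$-derivative, integrates the resulting ODE, and uses the boundedness of $m(x,\cdot)$ against the exponential decay of $R_\lambda$ to force $\int_0^{+\infty}R_\lambda\tilde{\cG}[\psi]\,(dx,d\alpha)=\eta(dx)$, which is exactly your fixed-point relation $(\tilde{r}_\lambda+\tilde{G}_\lambda)\eta_\lambda=\eta_\lambda$ and your normalisation of the integration constant at $a=+\infty$. Part (ii) is handled identically via $\rho(\tilde{r}_\lambda+\tilde{G}_\lambda)=\rho(\tilde{r}_\lambda+\tilde{J}_\lambda)$ and the argument of Proposition \ref{theo:main}.
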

\begin{proof}[Proof of Proposition \ref{theo:maindual}]
The proof is very similar to the previous proof.
\\(i): Let $\lambda\in \left[\underline{\lambda},+\infty\right[$ and let $\psi\in\mathcal{M}_{\text{loc}}^{+}(\cS\times\R_+)$   such that
\begin{equation*}
\psi=\varphi(dx)m(x,a)da
\end{equation*}
with $m(x,.)\in L^{\infty}(\R_+)$ and $\varphi\in \mathcal{M}^{+}(\cS)$, be a non-trivial solution of (\ref{eigenmeasuredual}). As in the previous proof, we extend all the quantities to the whole set $\R^d\times\R$. We denote 
\[\tilde{U}=\mathds{1}_{\cS\times\R_+}(x,a)\tilde{R}_{\lambda}(x,a)\tilde{\psi}\in \mathcal{M}^{+}_{\text{loc}}(\R^d\times\R)\]
and we compute the partial distributional derivative $\partial_a \tilde{U}$. We denote $\eta(dx)=\varphi(dx)m(x,0)$. Let $g\in C_c^{\infty}(\R^d\times\R)$, we have
\begin{align*}
\int_{\R^d\times\R}\partial_a g(x,a)\tilde{U}(dx,da)&=\int_{\cS\times\R_+}\left(\partial_a (gR_\lambda) +(D+\lambda)(gR_\lambda)\right)(x,a)\psi(dx,da)\\
&=-\int_{\cS\times\R_+}g(x,a)\tilde{R}_\lambda(x,a)\tilde{\cG}\left[\psi\right](dx,da)+\int_{\cS}g(x,0)\eta(dx).
\end{align*}
We deduce that $\partial_a \tilde{U}= \left(\eta \delta_0-\tilde{R}_{\lambda}\tilde{\cG}\left[\psi\right]\right)\mathds{1}_{\cS\times\R_+}$
and
\begin{equation*}
\tilde{U}=\left(\eta(dx)-\int_{0}^{a}R_\lambda(x,\alpha)\tilde{\cG}\left[\psi\right](dx,d\alpha)\right)\mathds{1}_{\cS\times\R_+}(x,a)da.
\end{equation*}
It comes that 
\begin{equation}\label{pr:0}
\psi=\frac{1}{R_{\lambda}(x,a)}\left(\eta(dx)-\int_{0}^{a}R_\lambda(x,\alpha)\tilde{\cG}\left[\psi\right](dx,da)\right).
\end{equation}
Since $\psi\in\mathcal{M}^{+}_{\text{loc}}(\cS\times\R_+)$ with $\psi=\varphi(dx)m(x,a)da$ and $m(x,.)\in L^{\infty}(\R_+)$,
we have necessarily
\begin{equation}\label{pr:1}
\int_{0}^{+\infty}R_\lambda(x,\alpha)\tilde{\cG}\left[\psi\right](dx,d\alpha)=\eta(dx)
\end{equation} 
which is equivalent to 
\begin{equation}\label{pr:2}
r_{\lambda}(x)\eta(dx) +dx\int_{\cS}\eta(dy)K_{\lambda}(x,y) =\eta(dx).
\end{equation}
By (\ref{pr:2}) we get that $(\tilde{r}_\lambda +\tilde{G}_\lambda)\eta =\eta$ and using (\ref{pr:0}), (\ref{pr:1}), (\ref{pr:2}), it comes that 
\begin{align*}
\psi &=\frac{da}{R_{\lambda}(x,a)}\left((1-p)\eta(dx)\int_{a}^{+\infty}B(x,\alpha)R_{\lambda}(x,\alpha)d\alpha\right. \\
&\left.+p\int_{\cS}\eta(dy)\int_{a}^{+\infty}B(x,\alpha)R_{\lambda}(x,\alpha)k(x,\alpha,y)d\alpha\right).
\end{align*}
Conversely, it is easy to check that such a measure is a solution of (\ref{eigenmeasuredual}).
The proof of (ii) is similar as in the previous proof, since by Lemma \ref{prop:eigenintegroparam}, $\rho(\tilde{r}_\lambda +\tilde{J}_\lambda)=\rho(\tilde{r}_\lambda + \tilde{G}_\lambda)$.
\end{proof}
We deduce a corollary which concerns the "regular" case.
\begin{coro}\label{coroeigen}
Assume Assumptions \ref{hypo1}, \ref{hypo3} and that $\overline{r}_{\lambda^{*}}<1$. There exists a unique $(\lambda^{*},N,\phi)\in \left[\underline{\lambda}, +\infty\right[\times C(\cS,L^1(\R_+))\times C(\cS,L^{\infty}(\R_+))$ such that
\begin{equation}\label{eq:regeigendir}
\begin{cases}
-\partial_a N(x,a) - (D(x,a)+\lambda^{*})N(x,a) = 0, \quad (x,a)\in\cS\times\R_+,\\
N(x,0)=\cF\left[N\right](x),\quad \int_{\cS\times\R_+}N=1,
\end{cases}
\end{equation}
\begin{equation}\label{eq:regeigendual}
\begin{cases}
\partial_a \phi(x,a)-(D(x,a)+\lambda^{*})\phi(x,a) + \cG\left[\phi\right](x,a)=0,\quad (x,a)\in\cS\times\R_+,\\
\int_{\cS\times\R_+}N\phi=1.
\end{cases}
\end{equation}
\end{coro}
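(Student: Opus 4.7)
The plan is to assemble $\lambda^{*},N,\phi$ piece by piece, feeding the outputs of Proposition \ref{prop:malthus}, Proposition \ref{theo:main} and Proposition \ref{theo:maindual} into the regularity statement of Lemma \ref{prop:eigenintegroparam} (ii). The extra hypothesis $\overline{r}_{\lambda^{*}}<1$ is precisely what is needed to promote measure-valued eigenvectors to continuous densities, because by Lemma \ref{prop:eigenintegroparam} (v) we have $\rho(r_{\lambda^{*}}+J_{\lambda^{*}})=\rho(r_{\lambda^{*}}+G_{\lambda^{*}})=\rho(\tilde r_{\lambda^{*}}+\tilde J_{\lambda^{*}})=1>\overline r_{\lambda^{*}}$, so (ii) of the lemma (applied once to $r+J$ and once, via the exchange of $J,G$, to $r+G$) produces continuous positive densities $u_{\lambda^{*}},v_{\lambda^{*}}\in C^{+}(\cS)$ for the relevant eigenvectors, each of which is unique up to a scalar by the algebraic-multiplicity-one assertion.

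For the direct problem I would first invoke Proposition \ref{prop:malthus} to get the unique $\lambda^{*}\in[\underline{\lambda},+\infty[$ such that $\rho(\tilde r_{\lambda^{*}}+\tilde J_{\lambda^{*}})=1$, then apply Proposition \ref{theo:main} (i) which forces any nonzero measure solution of \eqref{eq:regeigendir} to take the form $\mu_{\lambda^{*}}(dx)R_{\lambda^{*}}(x,a)\,da$. By the preceding paragraph $\mu_{\lambda^{*}}(dx)=u_{\lambda^{*}}(x)\,dx$ with $u_{\lambda^{*}}\in C^{+}(\cS)$, $u_{\lambda^{*}}>0$. Setting $N(x,a):=c\,u_{\lambda^{*}}(x)R_{\lambda^{*}}(x,a)$ and using the domination $R_{\lambda^{*}}(x,a)\le e^{-(\underline D+\lambda^{*})a}$ (which is integrable since $\lambda^{*}>-\underline D$), dominated convergence gives continuity of $x\mapsto R_{\lambda^{*}}(x,\cdot)$ in $L^{1}(\R_{+})$, hence $N\in C(\cS,L^{1}(\R_{+}))$; the finite positive constant $\int_{\cS\times\R_{+}}u_{\lambda^{*}}(x)R_{\lambda^{*}}(x,a)\,dxda$ fixes $c$ so that $\int N=1$.

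For the dual problem I would apply Proposition \ref{theo:maindual} at $\lambda=\lambda^{*}$, which forces $\psi=\phi(x,a)\,dxda$ with
\[
\phi(x,a)=\frac{1}{R_{\lambda^{*}}(x,a)}\!\left((1\!-\!p)v_{\lambda^{*}}(x)\!\int_{a}^{\infty}\!B(x,\alpha)R_{\lambda^{*}}(x,\alpha)\,d\alpha+p\!\int_{\cS}\!v_{\lambda^{*}}(y)\!\int_{a}^{\infty}\!B(x,\alpha)R_{\lambda^{*}}(x,\alpha)k(x,\alpha,y)\,d\alpha\,dy\right),
\]
once $\eta_{\lambda^{*}}=v_{\lambda^{*}}(x)\,dx$ is produced by the swapped version of Lemma \ref{prop:eigenintegroparam} (ii). The key computation is the change of variables $s=\alpha-a$, which rewrites $R_{\lambda^{*}}(x,a)^{-1}\int_{a}^{\infty}B(x,\alpha)R_{\lambda^{*}}(x,\alpha)\,d\alpha$ as $\int_{0}^{\infty}B(x,a+s)\exp(-\int_{0}^{s}D(x,a+\sigma)d\sigma-\lambda^{*}s)\,ds$, bounded uniformly in $(x,a)$ by $\|B\|_{\infty}/(\underline D+\lambda^{*})$; an analogous estimate handles the second term, so $\phi\in L^{\infty}(\cS\times\R_{+})$. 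Since $\int N\phi$ is then a finite positive number, a final rescaling fixes $\int N\phi=1$.

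Uniqueness follows in three layers: uniqueness of $\lambda^{*}$ is the content of Proposition \ref{prop:malthus} together with the observation that any nonzero positive solution of \eqref{eq:regeigendir} forces $1$ to be an eigenvalue of $\tilde r_{\lambda}+\tilde J_{\lambda}$ with a positive eigenvector, which by irreducibility and Proposition \ref{prop:schaefermain} (ii) means $\rho(\tilde r_{\lambda}+\tilde J_{\lambda})=1$; uniqueness of $N$ up to scalar comes from the algebraic-multiplicity-one assertion in Lemma \ref{prop:eigenintegroparam} (ii) applied to $r_{\lambda^{*}}+J_{\lambda^{*}}$; uniqueness of $\phi$ comes from the same assertion applied to $r_{\lambda^{*}}+G_{\lambda^{*}}$ via the swap. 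The main technical obstacle I anticipate is verifying the continuity $x\mapsto\phi(x,\cdot)\in L^{\infty}(\R_{+})$, since this requires uniform-in-$a$ control: one has to split the $s$-integral at some large $T$ (using the uniform continuity of $B,D,k$ on the compact set $\cS\times[0,T]$) and bound the tail by the exponential factor $e^{-(\underline D+\lambda^{*})s}$ which is available because $\lambda^{*}+\underline D>0$.
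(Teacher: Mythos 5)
Your proof is correct and follows essentially the same route as the paper: obtain $\lambda^{*}$ from Proposition \ref{prop:malthus}, use the representation results of Propositions \ref{theo:main} and \ref{theo:maindual} together with Lemma \ref{prop:eigenintegroparam} (ii) (via $\overline{r}_{\lambda^{*}}<1=\rho(r_{\lambda^{*}}+J_{\lambda^{*}})$) to get continuous positive densities and simplicity of the eigenvalue $1$, and rule out any other $\lambda$ by Proposition \ref{prop:schaefermain} (ii). You actually supply more detail than the paper does on the existence side (the explicit construction of $N$ and $\phi$ and the verification that they lie in $C(\cS,L^1(\R_+))$ and $C(\cS,L^{\infty}(\R_+))$), which the paper leaves implicit.
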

\begin{proof}[Proof of Corollary \ref{coroeigen}]
Let $\lambda^{*}\geq \underline{\lambda}$ be the unique solution of $\rho(r_{\lambda^{*}}+J_{\lambda^{*}})=1$ (see Proposition \ref{prop:malthus}). Since $\overline{r}_{\lambda^{*}}<1$ we deduce by Lemma \ref{prop:eigenintegroparam} (ii) that $1$ is a simple eigenvalue of the operators $r_{\lambda^{*}}+J_{\lambda^{*}}$ and $r_{\lambda^{*}}+G_{\lambda^{*}}$. Let $(N,\phi)\in C(\cS, L^1(\R_+))\times C(\cS, L^{\infty}(\R_+))$ be a solution of (\ref{eq:regeigendir}) and (\ref{eq:regeigendual}) with $\lambda=\lambda^{*}$. We  have $N(x,0), \phi(x,0)\in C^{+}(\cS)$ and we deduce by Theorem \ref{prop:eigenintegro} (ii) that $N(x,0)$ and $\phi(x,0)$ are positive eigenfunctions of $r_{\lambda^{*}}+J_{\lambda^{*}}$ and $r_{\lambda^{*}}+G_{\lambda^{*}}$  associated with the eigenvalue one. Since this eigenvalue is simple, the conditions $\int_{\cS\times\R_+}N=1$ and $\int_{\cS\times\R_+}N\phi=1$ allow us to fix $N$ and $\phi$. Assume now that there exists $(\lambda,N',\phi')$ such that $\lambda\neq\lambda^{*}$ and which satisfies (\ref{eq:regeigendir}) and (\ref{eq:regeigendual}). By Theorems \ref{theo:main} and \ref{theo:maindual}, it comes that $\lambda < \lambda^{*}$. We deduce that $\rho(r_\lambda +J_\lambda) > 1$. Moreover $N'(x,0),\phi'(x,0)\in C^{+}(\cS)$ are eigenfunctions of $r_\lambda + J_\lambda$ and $r_\lambda +G_\lambda$ associated with the eigenvalue 1, which is absurd by Proposition \ref{prop:schaefermain} (ii). 
\end{proof}
We are now able to describe the long-time behaviour of the solutions of the linear equation (\ref{eq:edplin}).
\begin{prop}\label{prop:asympdet}
Assume Assumptions \ref{hypo1}, \ref{hypo2}, \ref{hypo3} and that $\overline{r}_{\lambda^{*}}<1$. Assume that there exists $\underline{\eta}>0$ such that for all $a\in\R_+$ and $(x,y)\in\cS^2$
\begin{equation}\label{eq:hypocontr}
pB(y,a)k(y,x,a)\phi(x,0)\geq \underline{\eta}\phi(y,a).
\end{equation}
Let $(v_t)_{t\geq 0}\in C(\R_+, \mathcal{M}^{+}(\cS\times\R_+))$ be the solution of (\ref{eq:edplin}) started at $v_0\in \mathcal{M}^{+}(\cS\times\R_+)$. Then we have
\begin{align}\label{eq:inegexp}
\int_{\cS\times\R_+} \phi(x,a) \vert e^{-\lambda^{*}t} v_t& -m_0 N\vert(dx,da) \nonumber\\
&\leq e^{-\underline{\eta}\text{Leb}(\cS)t}\int_{\cS\times\R_+}\phi(x,a)\vert v_0-m_0N\vert(dx,da) 
\end{align}
where $m_0=\int_{\cS\times\R_+}\phi(x,a)v_0(dx,da)$.
\end{prop}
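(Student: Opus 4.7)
The strategy is a generalized relative entropy (GRE) argument in the measure setting, following \cite{perthame2006transport} and \cite{gwiazda2016generalized}. First perform the Malthusian rescaling and subtract the stationary profile: set $u_t := e^{-\lambda^{*}t}v_t$ and $\mu_t := u_t - m_0 N$. Since $N$ solves the stationary version of the rescaled equation (Corollary \ref{coroeigen}), the signed measure $\mu_t$ solves
\[
\partial_t \mu_t + \partial_a \mu_t = -(D+\lambda^{*})\mu_t,\qquad \mu_t(x,0)=\cF[\mu_t](x).
\]
Testing the weak form of (\ref{eq:edplin}) against the time-independent dual eigenfunction $\phi$ from Corollary \ref{coroeigen} and using (\ref{eq:regeigendual}) yields the conservation identity $\int \phi\,dv_t = e^{\lambda^{*}t}m_0$, i.e.\ $\int \phi\,d\mu_t \equiv 0$. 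Writing the Hahn decomposition $\mu_t = \mu_t^+ - \mu_t^-$, this conservation translates into the key balance $\int \phi\,d\mu_t^+ = \int \phi\,d\mu_t^- = \tfrac{1}{2}\int \phi\,d|\mu_t|$.

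Next I derive the entropy identity for $|\mu_t|$. By Kato's inequality (with equality in the interior $a>0$), $|\mu_t|$ satisfies the same linear transport equation in $a>0$, while its trace at $a=0$ equals $|\cF[\mu_t](x)|$. Multiplying by $\phi$, integrating by parts in $a$ (the flux at $a=+\infty$ vanishes thanks to $D\geq\underline{D}>0$), and using $\partial_a \phi = (D+\lambda^{*})\phi - \cG[\phi]$, one obtains
\[
\frac{d}{dt}\int \phi\,d|\mu_t| = \int_{\cS} \phi(x,0)\,|\cF[\mu_t](x)|\,dx - \int \cG[\phi]\,d|\mu_t|.
\]
The Fubini identity $\int_{\cS} \phi(x,0)\,\cF[\nu](dx) = \int \cG[\phi]\,d\nu$, applied with $\nu = |\mu_t|$, combined with the pointwise algebraic identity $\cF[|\mu_t|]-|\cF[\mu_t]| = 2\min(\cF[\mu_t^+],\cF[\mu_t^-])$, then yields
\[
\frac{d}{dt}\int \phi\,d|\mu_t| = -2\int_{\cS} \phi(x,0)\,\min\bigl(\cF[\mu_t^+],\cF[\mu_t^-]\bigr)(x)\,dx.
\]

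For the Doeblin-type closure, invoke the contraction assumption (\ref{eq:hypocontr}) (with the offspring variable $x$ in the last slot of $k$), which after division by $\phi(x,0)>0$ reads $pB(y,a)k(y,a,x) \geq \underline{\eta}\,\phi(y,a)/\phi(x,0)$. Retaining only the mutational part of $\cF$, for either sign,
\[
\cF[\mu_t^{\pm}](x) \geq p\int B(y,a)k(y,a,x)\,d\mu_t^{\pm}(y,a) \geq \frac{\underline{\eta}}{\phi(x,0)}\int \phi\,d\mu_t^{\pm} = \frac{\underline{\eta}}{2\phi(x,0)}\int \phi\,d|\mu_t|,
\]
where the last equality uses the balance $\int \phi\,d\mu_t^+=\int \phi\,d\mu_t^-$ derived above. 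Taking the minimum, multiplying by $\phi(x,0)$, and integrating over $x\in\cS$ produces $2\int_{\cS} \phi(x,0)\min(\cF[\mu_t^+],\cF[\mu_t^-])\,dx \geq \underline{\eta}\,\text{Leb}(\cS)\int \phi\,d|\mu_t|$. Substitution gives the differential inequality $\tfrac{d}{dt}\int \phi\,d|\mu_t| \leq -\underline{\eta}\,\text{Leb}(\cS)\int \phi\,d|\mu_t|$, and Gronwall's lemma produces (\ref{eq:inegexp}).

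The principal technical obstacle will be the rigorous justification of the entropy identity for the signed measure-valued solution $\mu_t$: since $v_0$ may carry non-trivial singular mass, the chain rule for $|\cdot|$ cannot be applied directly. The natural workaround is a double regularisation: first approximate $v_0$ by $L^1$-data, using the stability of the linear semigroup in the bounded-Lipschitz topology (where Assumption \ref{hypo2} enters via Lemma \ref{lemm:canizo}), derive the GRE identity for these smooth solutions through the convex approximation $|x|\simeq \sqrt{x^2+\varepsilon}$, and pass to the limit in both regularisations. Particular care is needed at the boundary $a=0$, where the trace of $|\mu_t|$ equals the absolute value of the trace (not the trace of the absolute value); it is precisely this algebraic fact, combined with the renewal boundary condition, that produces the sharp defect $2\min(\cF[\mu_t^+],\cF[\mu_t^-])$ driving the exponential decay.
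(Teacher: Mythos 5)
Your proposal is correct and follows essentially the same route as the paper: Malthusian rescaling, the conservation law $\int \phi\, d\mu_t=0$ obtained from the dual eigenfunction, the entropy identity for $\vert h_t\vert\phi$ derived by regularisation, the differential inequality $\frac{d}{dt}\int\phi\, d\vert\mu_t\vert\leq -\underline{\eta}\,\text{Leb}(\cS)\int\phi\, d\vert\mu_t\vert$, Gronwall, and finally the extension to measure data via approximation in the bounded-Lipschitz topology (Lemma \ref{lemm:canizo}) together with the semicontinuity statements of Lemma \ref{propapproxmeasure}. The only cosmetic difference is that you express the dissipation exactly as $-2\int_{\cS}\phi(x,0)\min(\cF[\mu_t^{+}],\cF[\mu_t^{-}])\,dx$ and bound the minimum from below using the half-mass balance, whereas the paper inserts the null term $\underline{\eta}\int\phi\, h_t=0$ inside the absolute value and applies the triangle inequality using the sign condition (\ref{eq:hypocontr}); these are the same estimate.
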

The idea of the proof is similar as in \cite{gwiazda2016generalized}. We show the spectral gap property for initial regular data, and then deduce it for measure initial data, using the two following Lemmas. The assumption (\ref{eq:hypocontr}) is similar as in \cite[Theorem 3.5]{perthame2006transport}. For any measure $\mu\in\mathcal{M}(\cS\times\R_+)$, let $\mu=\mu^a(x)dx +\mu^s(dx)$ be its Lebesgue decomposition. We define 
\begin{equation*}
\langle \mu\rangle = \int_{\cS\times\R_+}\sqrt{1 + \vert \mu^a(x)\vert^2}dx +\mu^s(\cS\times\R_+).
\end{equation*}
We use the following (semi-)continuity properties. The point (i) is well-known. See   \cite[Theorem 5]{kristensen2010relaxation} for (ii).
\begin{lemm}\label{propapproxmeasure}
Let $\mu_n,\mu \in \mathcal{M}(\cS\times\R_+)$ such that $\mu_n\underset{n\rightarrow\infty}{\rightarrow} \mu$ $\text{weakly}^*$ in $(C_b(\cS\times\R_+))'$. Then for all $f\in C_b(\cS\times\R_+)$,
\begin{itemize}
\item[(i)]\begin{equation*}
\liminf_{n\rightarrow\infty}\int_{\cS\times\R_+}f(x,a)\vert \mu_n\vert(dx,da)\geq \int_{\cS\times\R_+}f(x,a)\vert \mu\vert(dx,da).
\end{equation*}
\item[(ii)]If moreover $\langle\mu_n\rangle\underset{n\rightarrow\infty}{\rightarrow}\langle \mu\rangle$, then
\begin{equation*}
\lim_{n\rightarrow\infty}\int_{\cS\times\R_+}f(x,a)\vert \mu_n\vert(dx,da)= \int_{\cS\times\R_+}f(x,a)\vert \mu\vert(dx,da).
\end{equation*}
\end{itemize}
\end{lemm}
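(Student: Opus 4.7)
The plan for (i), which is meaningful in the natural case where $f\geq 0$, is to invoke the duality representation of the total variation for Radon measures:
\[
\int_{\cS\times\R_+} f\, d|\mu| = \sup\left\{\int_{\cS\times\R_+} fg\, d\mu : g\in C_b(\cS\times\R_+),\ |g|\leq 1\right\}.
\]
For each admissible $g$, the product $fg$ belongs to $C_b(\cS\times\R_+)$, so the weak$^*$ hypothesis yields $\int fg\, d\mu_n \to \int fg\, d\mu$, while the elementary bound $\int fg\, d\mu_n \leq \int f\, d|\mu_n|$ allows to pass to $\liminf_n$. Taking then the supremum over $g$ gives the claimed lower semi-continuity.

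For (ii), the plan is to invoke Theorem~5 of \cite{kristensen2010relaxation}, which identifies $\langle\cdot\rangle$ with the area-strict relaxation of the graph-area functional on finite Radon measures. Under this identification, the extra hypothesis $\langle\mu_n\rangle\to\langle\mu\rangle$ combined with weak$^*$ convergence constitutes area-strict convergence of $\mu_n$ to $\mu$, a mode of convergence preserved when composed with continuous positively $1$-homogeneous integrands. Specialising to the modulus yields in particular the total mass convergence
\[
|\mu_n|(\cS\times\R_+)\underset{n\to\infty}{\longrightarrow} |\mu|(\cS\times\R_+).
\]
From this, a sandwich argument then closes the proof: fix $c\geq \|f\|_\infty$, and apply (i) both to $f$ and to $c-f\geq 0$. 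The second application gives
\[
\liminf_n \int (c-f)\, d|\mu_n| \geq \int (c-f)\, d|\mu|,
\]
which, using the total mass convergence, rearranges into $\limsup_n \int f\, d|\mu_n| \leq \int f\, d|\mu|$. Combined with (i), this yields the desired equality.

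The substantive difficulty is entirely in (ii): weak$^*$ convergence alone permits mass to concentrate or to migrate between the absolutely continuous and singular parts of $\mu_n$, and there is no elementary compactness argument that precludes this. The role of the functional $\langle\cdot\rangle$ is precisely to provide the scalar quantity whose strict convergence rules out such concentration, and encoding this into a useful statement is exactly what the Kristensen--Rindler theorem supplies; the rest of the argument, as sketched above, is then a routine sandwich using the already-established (i).
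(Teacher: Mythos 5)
The paper offers no proof of this lemma at all --- it declares (i) well-known and cites Kristensen--Rindler [Theorem 5] for (ii) --- and your proposal supplies exactly the standard arguments behind those citations: the duality representation $\int f\,d|\mu|=\sup\{\int fg\,d\mu: g\in C_b,\ |g|\le 1\}$ for (i), and area-strict convergence (the content of the cited Theorem 5) for (ii), followed by a correct sandwich step that upgrades the $\liminf$ bound to a limit using total-mass convergence. Your observation that (i) must be read with $f\ge 0$ is the right one (the lemma is only applied with $f=\phi\ge 0$), so this is essentially the paper's approach with the omitted details filled in.
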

 The following lemma is proved in \cite[Theorem 2.4]{canizo2013measure}.
\begin{lemm}\label{lemm:canizo}
Assume Assumptions \ref{hypo1}, \ref{hypo2}. There is a constant $C>0$ such that for all $v_0^1,v_0^2\in \mathcal{M}^{+}(\cS\times\R_+)$ and $t\geq 0$:
\begin{equation*}
\Vert v_t^1 - v_t^2\Vert_{\text{BL}} \leq e^{Ct}\Vert v^1_0 -v^2_0\Vert_{\text{BL}} 
\end{equation*}
where $v^1, v^2$ are the solutions of (\ref{eq:edplin}) started at $v_0^1,v_0^2$.
\end{lemm}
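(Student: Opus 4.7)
My plan is to use a duality argument based on the dual (adjoint) transport equation. Set $w_t := v_t^1 - v_t^2 \in \mathcal{M}(\cS \times \R_+)$. By linearity of (\ref{eq:edplin}), $w_t$ satisfies the weak formulation
\[
\langle w_t, f\rangle = \langle w_0, f\rangle + \int_0^t \int_{\cS \times \R_+} \bigl(\partial_a f - D f + \cG[f]\bigr)(x,a)\, w_s(dx,da)\, ds
\]
for every time-independent $f \in C_b^{0,1}$. To extract an estimate in the bounded Lipschitz norm, I will propagate the test function backward in time and apply it to the initial datum $w_0$.

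Fix $T > 0$ and a terminal datum $f \in W^{1,\infty}(\cS \times \R_+)$ with $\|f\|_{1,\infty} \leq 1$, and consider the backward dual problem
\[
\begin{cases}
-\partial_s \psi_s + \partial_a \psi_s - D \psi_s + \cG[\psi_s] = 0, & (s,x,a) \in [0,T] \times \cS \times \R_+, \\
\psi_T = f.
\end{cases}
\]
After the change $\tau = T - s$, the method of characteristics along the trajectories $a \mapsto a + \tau$ yields the Duhamel representation
\[
\psi_{T-\tau}(x,a) = e^{-\int_0^\tau D(x,a+r)\,dr}\, f(x,a+\tau) - \int_0^\tau e^{-\int_\sigma^\tau D(x,a+r)\,dr}\, \cG[\psi_{T-\sigma}](x,a+\sigma)\, d\sigma.
\]
A standard Banach fixed-point/Gronwall argument, exploiting $\|B\|_\infty, \|D\|_\infty, \|k\|_\infty < \infty$, produces a unique bounded solution satisfying $\|\psi_{T-\tau}\|_\infty \leq e^{C\tau}\|f\|_\infty$.

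The crucial step, where Assumption \ref{hypo2} is used, is to upgrade this to a bound on $\|\psi_{T-\tau}\|_{1,\infty}$. Formally differentiating the Duhamel formula with respect to $a$ produces terms involving $\partial_a D$, $\partial_a B$ and $\partial_a k$, and differentiating with respect to $x$ produces terms involving $\nabla_x D$, $\nabla_x B$ and $\nabla_x k$; all of these are bounded by Assumption \ref{hypo2}. A Gronwall estimate on the combined quantity $\|\psi_{T-\tau}\|_\infty + \|\partial_a \psi_{T-\tau}\|_\infty + \|\nabla_x \psi_{T-\tau}\|_\infty$ then yields $\|\psi_{T-\tau}\|_{1,\infty} \leq e^{C\tau} \|f\|_{1,\infty}$ for some constant $C$ depending only on the $W^{1,\infty}$ norms of $B$, $D$ and $k$.

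The conclusion follows from duality: using $\psi_s$ as time-dependent test function in the weak formulation of (\ref{eq:edplin}) (after suitable approximation by smooth compactly-supported functions so that the evolution equation applies), the contribution of the interior and the boundary term at $a=0$ cancel against the equation satisfied by $\psi_s$, and one obtains $\langle w_T, f\rangle = \langle w_0, \psi_0\rangle$. Hence
\[
|\langle w_T, f\rangle| \leq \|\psi_0\|_{1,\infty}\, \|w_0\|_{\mathrm{BL}} \leq e^{CT}\, \|v_0^1 - v_0^2\|_{\mathrm{BL}},
\]
and taking the supremum over $\|f\|_{1,\infty} \leq 1$ yields the claim with $t = T$. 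The main obstacle is the spatial Lipschitz estimate: the nonlocal term $\cG[\psi_s]$ requires differentiating under the integral against $k(x,a,y)$, so one genuinely needs the $W^{1,\infty}$ regularity of $k$ in its first variable supplied by Assumption \ref{hypo2}, together with care in passing the resulting estimates through the Duhamel iteration.
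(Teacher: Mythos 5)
Your proposal is correct in outline, but it is worth noting that the paper does not prove this lemma at all: it simply invokes \cite[Theorem 2.4]{canizo2013measure}, so you are supplying an argument where the author supplies a citation. Your route --- solve the backward adjoint problem $\partial_s\psi+\partial_a\psi-D\psi+\cG[\psi]=0$ with terminal datum $f$, propagate the $W^{1,\infty}$ bound on $\psi$ via Duhamel along characteristics plus Gronwall (this is exactly where Assumptions \ref{hypo2} enter, through $\partial_x,\partial_a$ of $B$, $D$ and $k$, and through the trace $\psi_s(\cdot,0)$ in the nonlocal term), and then conclude $\langle w_T,f\rangle=\langle w_0,\psi_0\rangle$ by duality --- is the standard mechanism behind such flat-metric stability estimates and is essentially the same machinery as in the cited reference, which works with the mild/characteristic formulation and the dual characterization of the bounded Lipschitz norm. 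Two small points to repair before this is airtight: (1) the adjoint equation should carry $+\partial_s\psi$, not $-\partial_s\psi$, if you want $s\mapsto\langle w_s,\psi_s\rangle$ to be constant (compare the generator in (\ref{eq:weakformpde})); consequently the signs and integration limits in your Duhamel formula need adjusting, with the nonlocal term evaluated at times $T-\tau+\sigma$ rather than $T-\sigma$. (2) The weak formulation is stated for $C_b^{1,0,1}$ test functions, while $\psi$ built from an $f\in W^{1,\infty}$ is only Lipschitz, so the mollification step you mention is genuinely needed and should be accompanied by the observation that the $W^{1,\infty}$ bounds pass to the limit. Neither point affects the validity of the approach.
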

We now give the proof of Proposition \ref{prop:asympdet}.
\begin{proof}[Proof of Proposition \ref{prop:asympdet}]
It suffices to prove the result for regular initial data, and then conclude by regularising the initial measure. To analyse the regular case, we follow the ideas of \cite{perthame2006transport}. So, let $(v_t)_{t\geq 0}$ be the solution of (\ref{eq:edplin}) started at $v_0\in L^1(\cS\times\R_+)\cap C^{0,1}_b$. First, by choosing $\phi$ as test function in (\ref{eq:weakformpde}), we obtain the following invariance:
\begin{equation}\label{proo:invariant}
\int_{\cS\times\R_+}e^{-\lambda^{*}t}v_t(x,a)\phi(x,a)dxda =\int_{\cS\times\R_+}v_0(x,a)\phi(x,a)dxda
\end{equation}
where $\lambda^{*}, \phi$ are defined in Corollary \ref{coroeigen}.
We now define  $h_t(x,a):=e^{-\lambda^{*} t}v_t(x,a) -m_0 N(x,a)$ where 
\begin{equation*}
m_0=\int_{\cS\times\R_+}v_0(x,a)\phi(x,a)dxda.
\end{equation*}
It is straightforward to verify that 
\begin{equation*}
\begin{cases}
\partial_t (h_t(x,a)\phi(x,a)) +\partial_a (h_t(x,a)\phi(x,a))= -h_t(x,a)\mathcal{G}\left[\phi\right](x,a)\\
h_t(x,0)\phi(x,0)=\phi(x,0)\mathcal{F}\left[h_t\right](x).
\end{cases}
\end{equation*}
Using a regularisation method used in \cite[Proposition 6.3]{perthame2006transport}, we deduce that 
\begin{equation}\label{eq:proofasymp1}
\begin{cases}
\partial_t (\vert h_t\vert(x,a)\phi(x,a)) +\partial_a (\vert h_t\vert(x,a)\phi(x,a))= -\vert h_t\vert(x,a)\mathcal{G}\left[\phi\right](x,a)\\
\vert h_t(x,0)\vert\phi(x,0)=\phi(x,0)\vert\mathcal{F}\left[h_t\right](x)\vert.
\end{cases}
\end{equation}
Integrating the first equation in (\ref{eq:proofasymp1}) over $\cS\times\R_+$, we obtain
\begin{align}\label{eq:prooasymp2}
\frac{\text{d}}{\text{dt}}\int_{\cS\times\R_+}\vert h_t(x&,a)\vert  \phi(x,a)dxda \\\nonumber
&=\int_{\cS}\phi(x,0)\vert \mathcal{F}\left[h_t\right](x)\vert dx - \int_{\cS\times\R_+} \vert h_t(x,a)\vert \mathcal{G}\left[\phi\right](x,a)dxda.
\end{align} 
From the equation (\ref{eq:prooasymp2}), and using the invariant (\ref{proo:invariant}), we deduce that
\begin{align*}\frac{\text{d}}{\text{dt}}&\int_{\cS\times\R_+}\vert h_t(x,a)\vert  \phi(x,a)dxda \\
&=\int_{\cS}\vert (1-p)\phi(x,0)\int_{\R_+}B(x,a)h_t(x,a)da \\
&+\int_{\cS\times\R_+}(pB(y,a)k(y,x,a)\phi(x,0) -\underline{\eta}\phi(y,a))h_t(y,a)dady\vert dx\\
&-\int_{\cS}\left((1-p)\int_{\R_+}\phi(x,0)B(x,a)\vert h_t(x,a)\vert da \right.\\
&\left.+p \int_{\cS\times\R_+} B(x,a)k(x,y,a)\phi(y,0)\vert h_t(x,a)\vert dyda\right)dx
\end{align*}
Since we assume that for any $a\in\R_+$, $(x,y)\in\cS^2$
\[
pB(y,a)k(y,x,a)\phi(x,0)\geq \underline{\eta}\phi(y,a),
\]
it comes that
\begin{equation*}
\frac{\text{d}}{\text{dt}}\int_{\cS\times\R_+}\vert h_t(x,a)\vert  \phi(x,a)dxda\leq -\underline{\eta}\text{ }\text{Leb}(\cS)\int_{\cS\times\R_+}\vert h_t(x,a)\vert \phi(x,a)dxda 
\end{equation*}
and we conclude by Gronwall's Lemma. Now consider the case $v_0\in \mathcal{M}^{+}(\cS\times\R_+)$. There is a family of non-negative functions $v_0^\epsilon\in C^{0,1}_b\cap L^1(\cS\times\R_+)$ such that $\Vert v_0^{\epsilon} -v_0\Vert_{\text{BL}}\underset{\epsilon \rightarrow0 }{\rightarrow} 0$ and $\langle v_0^{\epsilon}\rangle \underset{\epsilon \rightarrow 0 }{\rightarrow} \langle v_0\rangle$. By Lemma \ref{lemm:canizo}, it comes that for each $t\in\R_+$, $\Vert v_t^{\epsilon} - v_t\Vert_{\text{BL}}\underset{\epsilon\rightarrow 0}{\rightarrow}0$. 
\\Similarly as in \cite{gwiazda2016generalized}, we conclude by using Lemma \ref{propapproxmeasure}. By the previous part of the proof we deduce that (\ref{eq:inegexp}) is satisfied fo $v_t^{\epsilon}$. By applying Lemma \ref{propapproxmeasure} (i) with $\mu^{\epsilon} =  e^{-\lambda^{*}t} v_t^{\epsilon} -m_0^{\epsilon}N$ which satisfies $\mu^{\epsilon} \rightarrow e^{-\lambda^{*}t}v_t - m^{0}N$ weakly* as $\epsilon\rightarrow 0$; and Lemma \ref{propapproxmeasure} (ii) with $\mu_0^{\epsilon}= v_0^{\epsilon}-m_0^{\epsilon}N$ which satisfies $\mu_0^{\epsilon}\rightarrow v_0- m^{0}N$ weakly* and $\langle \mu_0^{\epsilon}\rangle\rightarrow\langle v_0- m^{0}N\rangle$ as $\epsilon\rightarrow 0$,  we conclude by taking the $\liminf$ in the left side of (\ref{eq:inegexp}) and the limit $\epsilon\rightarrow 0$ in the right side of (\ref{eq:inegexp}).
\end{proof}
Let us prove that the Malthusian parameter also plays a main role for the stochastic underlying dynamics. For each $K\in\N^{*}$, let $(Y_t^K)_{t\geq 0}$ be the process with infinitesimal generator $B^K$ defined, for any $f\in C_b^{0,1}$ and $\mu\in\mathcal{M}^{+}(\cS\times\R_+)$ by
\begin{align}\label{eq:generateurlin}
B^K F_f&(\mu)=\int_{\cS\times\R_+}\partial_a f(x,a)F'(\langle \mu,f\rangle)\mu(dx,da)\\
&+K\int_{\cS\times\R_+}\left\lbrace(F(\langle \mu +\frac{\delta_{(x,0)}}{K},f\rangle)-F(\langle \mu,f\rangle))(1-p)B(x,a)\right.\nonumber\\
&\left.+\left(\int_{\cS}(F(\langle \mu +\frac{\delta_{(y,0)}}{K},f\rangle)-F(\langle \mu,f\rangle))B(x,a)pk(x,a,y)dy\right)\right.\nonumber\\
&\left. +(F(\langle \mu -\frac{\delta_{(x,a)}}{K},f\rangle)-F(\langle \mu,f\rangle))D(x,a)\right\rbrace\mu(dx,da)\nonumber
\end{align}
where $F_f(\mu):=F(\langle \mu,f\rangle)$. The following result is similar as one proved in \cite{jagers2000population} for age structured dynamics.
\begin{prop}
Assume that $\lambda^{*}>0$ and there exists $C>0$ such that for all $(x,a)\in \cS\times\R_+$
\begin{equation}\label{eq:hypotechsto}
\mathcal{G}\left[\phi^{2}\right](x,a)+D(x,a)\phi^2(x,a)\leq C \phi(x,a).
\end{equation}
\begin{itemize}
\item[(i)] Let $K\in\mathbb{N}^{*}$. The process $V^K$ defined by $V_t^K:=e^{-\lambda^{*}t}\langle Y_t^K,\phi\rangle$ is a square integrable martingale which satisfies $\mathbb{E}\left[\langle V^K\rangle_{\infty}\right]<+\infty$. The process $V^K$ converges in $L^2$ and almost surely to a non degenerate limit $V_\infty^K$.
\item[(ii)]Assume that $\sup_{K\in\N^{*}}\mathbb{E}\left[\langle Y_0^K,1\rangle^2\right]<+\infty$ and that $Y_0^K$ converges in law to $v_0\in\mathcal{M}^{+}(\cS\times\R_+)$ as $K\rightarrow\infty$. For all $\epsilon>0$,
\begin{equation*}
\lim_{K\rightarrow\infty}\mathbb{P}\left(\sup_{t\geq 0}\vert V_t^K -\langle v_0,\phi\rangle\vert >\epsilon\right)=0.
\end{equation*}
\end{itemize}
\end{prop}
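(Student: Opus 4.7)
The plan is to use the dual eigenfunction $\phi$ as test function in Dynkin's formula for the generator $B^{K}$, and then control fluctuations through the carr\'e du champ, using hypothesis (\ref{eq:hypotechsto}).

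First, I would compute $B^{K} F_{\phi}(\mu)$ with $F(x)=x$. Expanding each jump increment as $F(\langle \mu + \delta_{(\cdot,0)}/K,\phi\rangle)-F(\langle \mu,\phi\rangle)=\phi(\cdot,0)/K$ and collecting terms gives
\begin{equation*}
B^{K}F_{\phi}(\mu)=\int_{\cS\times\R_{+}}\bigl\{\partial_{a}\phi+\mathcal{G}[\phi]-D\phi\bigr\}(x,a)\,\mu(dx,da)=\lambda^{*}\langle\mu,\phi\rangle,
\end{equation*}
where the second equality uses the dual equation (\ref{eq:regeigendual}). Dynkin's formula then yields that $M_{t}^{K}:=\langle Y_{t}^{K},\phi\rangle-\langle Y_{0}^{K},\phi\rangle-\lambda^{*}\int_{0}^{t}\langle Y_{s}^{K},\phi\rangle ds$ is a local martingale, and the stochastic integration-by-parts formula shows that $V_{t}^{K}=e^{-\lambda^{*}t}\langle Y_{t}^{K},\phi\rangle$ is itself a local martingale with $dV_{t}^{K}=e^{-\lambda^{*}t}dM_{t}^{K}$.

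Next, I would compute the carr\'e du champ with $F(x)=x^{2}$. Expanding $(a+b/K)^{2}-a^{2}=2ab/K+b^{2}/K^{2}$ in (\ref{eq:generateurlin}), the $2ab/K$ pieces reassemble into $2\langle\mu,\phi\rangle\,B^{K}F_{\phi}(\mu)$, and the remaining $b^{2}/K^{2}$ pieces give
\begin{equation*}
B^{K}F_{\phi}^{2}(\mu)-2\langle\mu,\phi\rangle\,B^{K}F_{\phi}(\mu)=\frac{1}{K}\int_{\cS\times\R_{+}}\bigl\{\mathcal{G}[\phi^{2}]+D\phi^{2}\bigr\}d\mu\leq\frac{C}{K}\langle\mu,\phi\rangle
\end{equation*}
by (\ref{eq:hypotechsto}). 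Hence $\langle M^{K}\rangle_{t}\leq (C/K)\int_{0}^{t}\langle Y_{s}^{K},\phi\rangle ds$ and $\langle V^{K}\rangle_{t}\leq (C/K)\int_{0}^{t}e^{-2\lambda^{*}s}\langle Y_{s}^{K},\phi\rangle ds$. A standard localization argument (the total jump rate is linear in $\langle\mu,1\rangle$, so every moment stays finite on any bounded time interval) together with the identity $\mathbb{E}[\langle Y_{s}^{K},\phi\rangle]=e^{\lambda^{*}s}\mathbb{E}[\langle Y_{0}^{K},\phi\rangle]$ obtained by taking expectations in Dynkin's formula yields, using $\lambda^{*}>0$,
\begin{equation*}
\mathbb{E}[\langle V^{K}\rangle_{\infty}]\leq\frac{C}{K}\int_{0}^{\infty}e^{-\lambda^{*}s}ds\,\mathbb{E}[\langle Y_{0}^{K},\phi\rangle]=\frac{C}{K\lambda^{*}}\mathbb{E}[\langle Y_{0}^{K},\phi\rangle]<\infty.
\end{equation*}
This makes $V^{K}$ an $L^{2}$-bounded martingale; Doob's convergence theorem delivers the a.s.\ and $L^{2}$ limit $V_{\infty}^{K}$, with $\mathbb{E}[V_{\infty}^{K}]=V_{0}^{K}>0$ whenever the population starts non-empty, which is the non-degeneracy in (i).

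For (ii), Doob's $L^{2}$ maximal inequality applied to the centred martingale $V^{K}-V_{0}^{K}$ gives
\begin{equation*}
\mathbb{E}\Bigl[\sup_{t\geq 0}\bigl|V_{t}^{K}-V_{0}^{K}\bigr|^{2}\Bigr]\leq 4\mathbb{E}[\langle V^{K}\rangle_{\infty}]\leq\frac{4C\|\phi\|_{\infty}}{K\lambda^{*}}\mathbb{E}[\langle Y_{0}^{K},1\rangle],
\end{equation*}
which is $O(1/K)$ under the uniform second moment bound on $\langle Y_{0}^{K},1\rangle$. Hence $\sup_{t}|V_{t}^{K}-V_{0}^{K}|\to 0$ in probability. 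Since $Y_{0}^{K}\to v_{0}$ in law with $v_{0}$ deterministic and $\phi$ is continuous and bounded, $V_{0}^{K}=\langle Y_{0}^{K},\phi\rangle\to\langle v_{0},\phi\rangle$ in probability, and the triangle inequality concludes. The main obstacle is the regularity of $\phi$ required to use it as test function in $B^{K}$ (defined a priori on $C_{b}^{0,1}$): from (\ref{eq:regeigendual}) one has $\partial_{a}\phi=(D+\lambda^{*})\phi-\mathcal{G}[\phi]\in L^{\infty}$, and continuity in $x$ follows from continuity of $B,D,k$ and $\phi(\cdot,0)$, so an approximation argument (for instance truncating $\phi$ by a smooth cutoff in $a$) justifies applying the generator; a secondary technical point is controlling all expectations through the localisation sequence $\tau_{n}=\inf\{t:\langle Y_{t}^{K},1\rangle>n\}$.
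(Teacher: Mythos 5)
Your proposal is correct and follows essentially the same route as the paper: the paper simply cites the semimartingale decomposition of $Y^K$ from Tran's thesis to get the martingale property of $V^K$ and the explicit bracket $\langle V^{K}\rangle_t = \frac{1}{K}\int_{0}^{t}e^{-2\lambda^{*}s}\langle Y_s^{K},\mathcal{G}[\phi^2]+D\phi^2\rangle ds$, which is exactly what you re-derive from the generator, and then, as you do, it bounds $\mathbb{E}[\langle V^K\rangle_\infty]$ by $O(1/K)$ via hypothesis (\ref{eq:hypotechsto}) and the martingale identity $\mathbb{E}[e^{-\lambda^* s}\langle Y_s^K,\phi\rangle]=\mathbb{E}[\langle Y_0^K,\phi\rangle]$, concluding (i) by Doob's convergence theorem and (ii) by Doob's maximal inequality plus the convergence of $\langle Y_0^K,\phi\rangle$ to $\langle v_0,\phi\rangle$. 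Your explicit attention to the admissibility of $\phi$ as a test function is a technical point the paper leaves implicit by deferring to the cited reference.
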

\begin{proof}
(i): Using a classical semimartingale decomposition for the process $Y^K$ proved in \cite{tran2006modeles}, the process $V^K$ is a square integrable martingale with quadratic variation 
\begin{align*}
\langle V^{K}\rangle_t = \frac{1}{K}\int_{0}^{t}e^{-2\lambda^{*}s}\langle Y_s^{K},\mathcal{G}\left[\phi^2\right]+D\phi^2\rangle ds.
\end{align*}
Taking the expectation, we deduce from (\ref{eq:hypotechsto}) that
\begin{align*}
\mathbb{E}\left[\langle V^K\rangle_t\right]& \leq  \frac{C}{K}\int_{0}^{t}e^{-\lambda^{*}s}\mathbb{E}\left[e^{-\lambda^{*}s}\langle Y_s^K,\phi\rangle\right] ds.
\end{align*}
Since $e^{-\lambda^{*}t}\langle Y_t^K,\phi\rangle$ is a martingale, we deduce that
\begin{align*}
\mathbb{E}\left[\langle V^K\rangle_t\right]\leq \frac{C \mathbb{E}\left[\langle Y_0^K,\phi\rangle\right]}{K}\int_{0}^{t}e^{-\lambda^{*}s}ds.
\end{align*}
Therefore
\begin{equation}\label{eq:crochetinf}
\mathbb{E}\left[\langle V^K\rangle _{\infty}\right]=O\left(\frac{1}{K}\right),
\end{equation}
which proves (i). 
\\(ii): Let $\epsilon>0$. We have
\begin{align*}
\mathbb{P}\left(\sup_{t\geq 0}\vert \langle V_t^K,\phi\rangle - \langle v_0,\phi\rangle\vert >\epsilon\right)\leq \mathbb{P}\left(\sup_{t\geq 0}\vert M_t^K\vert >\epsilon   \right)+\mathbb{P}\left(\vert V_0^K -\langle v_0,\phi\rangle\vert >\epsilon\right)
\end{align*}
where $M_t^K$ is a martingale started at $0$ which satisfies $\mathbb{E}\left[\langle M^K\rangle _{\infty}\right]=O\left(\frac{1}{K}\right)$. By Doob's inequality, we deduce that
\begin{align*}
\mathbb{P}\left(\sup_{t\geq 0}\vert M_t^K\vert >\epsilon   \right)\leq \frac{2}{\epsilon^2}\sup_{t\geq 0}\mathbb{E}\left[\vert M_t^K\vert^2\right]= \frac{2}{\epsilon^2}\mathbb{E}\left[\langle M^K\rangle_{\infty}\right]
\end{align*}
and we conclude using (\ref{eq:crochetinf}).
\end{proof}
As in \cite{bonnefon2015concentration}, we can give a concrete example in which the eigenmeasure is singular. One can note that this phenomenon appears only if $p<1$. If $p=1$, we obtain that $r_\lambda = 0$: the operator $r_\lambda +J_\lambda = J_\lambda$ is compact and the eigenelements are continuous functions. 
\paragraph{An example of a non regular stable distribution.} Let $\cS\subset \R^d$ which satisfies (\ref{hyp:S}). Let $B(x,a)=B(x)$ such that $\frac{1}{\overline{B}-B}\in L^1(\cS)$. Let $D(x,a)=D\in\R_+^{*}$ and $k(x,y,a)=1$.
\begin{prop}
Let $p_0\in \left]0,1\right[$ such that $(p_0/(1-p_0))\overline{B}\int_{\cS}\frac{1}{\overline{B}-B(x)}dx<1$. 
Let $\lambda\geq \underline{\lambda}$, $\mu_\lambda =\mu^s + u(x)dx\in \mathcal{M}^{+}(\cS)$ and $\eta_\lambda =\eta^s + v(x)dx \in\mathcal{M}^{+}(\cS)$ be  such that $\mu_\lambda = (\tilde{r}_\lambda+\tilde{J}_\lambda)\mu_\lambda$ and $\eta_\lambda = (\tilde{r}_\lambda+\tilde{G}_\lambda)\eta_\lambda$.
Then for all $p \in \left]0,p_0\right[$, $\mu^s\neq 0$ and $\eta^s\neq 0$.
\end{prop}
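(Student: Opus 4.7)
The plan is to exploit the explicit form of $r_\lambda$ and $K_\lambda$ in this example to split the eigenequation into its singular and absolutely continuous parts, and then read off the algebraic relation between $\mu^s(\cS)$ (resp.\ $\eta^s(\cS)$) and the continuous part. First I compute
\[
r_\lambda(x) = \frac{(1-p)B(x)}{D+\lambda}, \qquad K_\lambda(x,y) = \frac{pB(x)}{D+\lambda},
\]
so that $\overline{r}_\lambda = (1-p)\overline{B}/(D+\lambda)$ and the integrability hypothesis on $1/(\overline{B}-B)$ translates into $1/(\overline{r}_\lambda-r_\lambda)\in L^1(\cS)$. I next show that for $p\in(0,p_0)$ the Malthusian parameter of Proposition \ref{prop:malthus} is exactly $\lambda^* = (1-p)\overline{B}-D$, at which $\overline{r}_{\lambda^*}=1$. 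Indeed, any continuous positive eigenfunction of $r_{\lambda^*}+J_{\lambda^*}$ with eigenvalue $\rho>\overline{r}_{\lambda^*}$ would have the form $u(x)=pC/((D+\lambda^*)\rho-(1-p)B(x))$ with $C=\int Bu$, and substituting back would force $1=\int_\cS pB(y)/((D+\lambda^*)\rho-(1-p)B(y))\,dy$; the right-hand side is decreasing in $\rho$ and at $\rho=1$ equals $\tfrac{p}{1-p}\int_\cS B(y)/(\overline{B}-B(y))\,dy\leq \tfrac{p_0}{1-p_0}\overline{B}\int_\cS 1/(\overline{B}-B)\,dy<1$. Hence no such $\rho$ exists, so by Lemma \ref{prop:eigenintegroparam} (i)--(ii) one has $\rho(r_{\lambda^*}+J_{\lambda^*})=\overline{r}_{\lambda^*}=1$; by Lemma \ref{prop:eigenintegroparam} (v) and Malthusian uniqueness, $\lambda=\lambda^*$.

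I then decompose $\mu_\lambda=(\tilde r_\lambda+\tilde J_\lambda)\mu_\lambda$ along $\mu_\lambda=\mu^s+u(x)\,dx$. Since $r_\lambda$ is continuous, $r_\lambda(x)\mu^s(dx)$ remains singular while the remaining terms on the right-hand side are absolutely continuous; uniqueness of the Lebesgue decomposition produces
\[
(1-r_\lambda(x))\mu^s(dx)=0, \qquad u(x)(1-r_\lambda(x)) = \tfrac{p}{D+\lambda}\bigl(\overline{B}\mu^s(\cS)+\textstyle\int_\cS Bu\bigr) \text{ a.e.}
\]
The first relation confines $\mu^s$ to $\Sigma_\lambda=\{B=\overline{B}\}$, so $\int B\,d\mu^s=\overline{B}\mu^s(\cS)$. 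Using $1-r_\lambda(x)=(1-p)(\overline{B}-B(x))/(D+\lambda)$ rearranges the second relation to
\[
u(x)=\frac{p\,C}{(1-p)(\overline{B}-B(x))}, \qquad C := \overline{B}\mu^s(\cS)+\int_\cS Bu.
\]
Multiplying by $B(x)$ and integrating yields the scalar identity $C(1-\tfrac{p}{1-p}I)=\overline{B}\mu^s(\cS)$, where $I:=\int_\cS B(y)/(\overline{B}-B(y))\,dy$.

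For $p\in(0,p_0)$ the bracketed factor $1-pI/(1-p)$ is strictly positive, since $I\leq \overline{B}\int_\cS 1/(\overline{B}-B)\,dy$ and thus $\tfrac{p}{1-p}I\leq \tfrac{p_0}{1-p_0}\overline{B}\int_\cS 1/(\overline{B}-B)\,dy<1$. Therefore $C$ is proportional to $\mu^s(\cS)$; if $\mu^s=0$ then $C=0$ and $u\equiv 0$, so $\mu_\lambda=0$, contradicting the nontriviality of $\mu_\lambda$ given by Lemma \ref{prop:eigenintegroparam}. Hence $\mu^s\neq 0$. The argument for $\eta_\lambda$ is essentially the same: here $K_\lambda(x,y)=pB(x)/(D+\lambda)$ does not depend on $y$, so $\int_\cS K_\lambda(x,y)\eta_\lambda(dy)=pB(x)\eta_\lambda(\cS)/(D+\lambda)$; the same Lebesgue decomposition yields $v(x)=pB(x)E/((1-p)(\overline{B}-B(x)))$ with $E:=\eta^s(\cS)+\int_\cS v$, and the integrated consistency gives $E(1-pI/(1-p))=\eta^s(\cS)$, so $\eta^s=0$ again forces $\eta_\lambda=0$.

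The main obstacle is the first step: verifying that the eigenvalue $1$ is attained exactly at the boundary $\overline{r}_\lambda=1$ rather than in the regular regime $\overline{r}_\lambda<1$. Without this, Lemma \ref{prop:eigenintegroparam} (ii) would render $\mu_\lambda$ absolutely continuous and the claim would fail. This is the only place where the quantitative smallness condition on $p_0$ is genuinely used, namely through the evaluation at $\rho=\overline{r}_{\lambda^*}=1$ of the spectral generating function $\rho\mapsto \int_\cS pB/((D+\lambda^*)\rho-(1-p)B)\,dy$.
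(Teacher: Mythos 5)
Your computations and the final bookkeeping (Lebesgue decomposition of the eigenequation, the scalar identity $C\bigl(1-\tfrac{p}{1-p}I\bigr)=\overline{B}\mu^s(\cS)$, and the analogous one for $\eta_\lambda$) are correct, and the overall strategy works. However, there is a genuine gap at the step you yourself identify as the linchpin: to get the exact identity $1-r_\lambda(x)=(1-p)(\overline{B}-B(x))/(D+\lambda)$ you need $\overline{r}_\lambda=1$, which you obtain by claiming $\lambda=\lambda^*$ ``by Lemma \ref{prop:eigenintegroparam} (v) and Malthusian uniqueness''. To invoke the uniqueness in Proposition \ref{prop:malthus} you must first know that $\rho(\tilde{r}_\lambda+\tilde{J}_\lambda)=1$, and the only input you have is that $1$ is an eigenvalue of $\tilde{r}_\lambda+\tilde{J}_\lambda=(r_\lambda+G_\lambda)'$ with a \emph{positive eigenmeasure}. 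Proposition \ref{prop:schaefermain} (ii) asserts that the spectral radius is the only eigenvalue with a non-negative eigenvector for an irreducible operator on $C(\cS)$, not for its adjoint on $\mathcal{M}(\cS)$; the paper itself, in the proof of Proposition \ref{theo:main} (ii), deduces from exactly this data only the inequality $1\leq\rho(\tilde{r}_\lambda+\tilde{J}_\lambda)$. The dual uniqueness statement is in fact true here (e.g.\ pair $\eta_\lambda$ against a positive eigenfunction of $r_\lambda+G_\lambda$ when $\rho>\overline{r}_\lambda$, or run the Lebesgue-decomposition argument at an arbitrary eigenvalue), but it has to be proved; as written, this step is unsupported.

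The gap is easily repaired, and in a way that shows your detour through $\lambda^*$ is unnecessary: all your argument really needs is $\overline{r}_\lambda\leq 1$, which follows directly from positivity. Indeed, if $\mu^s=0$ the a.c.\ part of the eigenequation reads $u(x)(1-r_\lambda(x))=pC/(D+\lambda)\geq 0$ with $u\geq 0$, which forces $1-r_\lambda(x)>0$ a.e.\ (hence everywhere, by continuity) unless $C=0$, in which case you are done anyway; then $1-r_\lambda(x)\geq\overline{r}_\lambda-r_\lambda(x)$ turns your exact identity into the inequality $C\bigl(1-\tfrac{p}{1-p}I\bigr)\leq\overline{B}\mu^s(\cS)=0$, which still yields $C=0$ and $\mu_\lambda=0$. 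This is essentially the paper's proof, which is shorter than yours: it assumes the eigenmeasure is absolutely continuous with normalized density $v$, solves $v(x)=\tfrac{pB(x)}{\lambda+D}\tfrac{1}{1-r_\lambda(x)}\leq\tfrac{p}{1-p}\tfrac{B(x)}{\overline{B}-B(x)}$ using the inequality $1-r_\lambda\geq\overline{r}_\lambda-r_\lambda$, and integrates to obtain $1<1$, never identifying $\lambda$ or computing $\lambda^*$. Your preliminary verification that $\rho(r_{\lambda^*}+J_{\lambda^*})=\overline{r}_{\lambda^*}$ at $\lambda^*=(1-p)\overline{B}-D$ is a correct and mildly informative side remark, but it is not needed for the statement.
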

\begin{proof}
Let $p\in \left]0,p_0\right[$ and $\lambda\geq \underline{\lambda}$. First remark that we have $r_\lambda(x)=(1-p)\frac{B(x)}{\lambda+D}$ and $K_{\lambda}(x,y)=p\frac{B(x)}{\lambda+D}$. Assume there exists a non negative function $v\in L^1(\cS)$ with $\int_{\cS}v(y)dy=1$ such that $(\tilde{r}_{\lambda}+\tilde{G}_{\lambda})v = v$. We have almost everywhere on $\cS$,
\begin{equation*}
r_{\lambda}(x)v(x)+ \int_{\cS}K_{\lambda}(x,y)v(y)dy =v(x).
\end{equation*}
We get that almost everywhere on $\cS$
\begin{equation*}
v(x)=p\frac{B(x)}{\lambda+D}\frac{1}{1-r_{\lambda}(x)}\leq p\frac{B(x)}{\lambda+D}\frac{1}{\overline{r}_{\lambda}-r_{\lambda}(x)}=\frac{p}{1-p}\frac{B(x)}{\overline{B}-B(x)}
\end{equation*}
and we deduce that 
\begin{equation*}
1\leq \frac{p}{1-p}\overline{B}\int_{\cS}\frac{1}{\overline{B}-B(x)}dx <1
\end{equation*}
which is absurd. So, $\eta^s\neq 0$. The proof is similar for $\mu^s$.
\end{proof}
\section{Proof of the Main Results}
We can now give the proof of our main results stated in Section 1.
\begin{proof}[Proof of Theorem \ref{maintheostatio}]
It is obvious that stationary states $\overline{n}\in \mathcal{M}^{+}(\cS\times\R_+)$ are eigenmeasures of the linear operator. Indeed, they are solutions of (\ref{eq:eigenmeasure}) with $\lambda = c\int_{\cS\times\R_+}\overline{n}$. Since $\rho(\tilde{r}_0 +\tilde{J}_0)>1$ we deduce by monotony that $\lambda^{*}>0$ and we can choose an eigenvector $\overline{n}=\mu_{\lambda^{*}}R_{\lambda^{*}}$ which satisfies $c\int_{\cS\times\R_+}\overline{n} =\lambda^{*}$. 
\end{proof}
\begin{proof}[Proof of Theorem \ref{maintheolongtime}]
The idea is to transform the solution of the non-linear equation to those of the linear equation. Indeed, let $(n_t)_{t\geq 0}\in C(\R_+, \mathcal{M}^{+}(\cS\times\R_+))$ be the solution of (\ref{eq:weakformpde}) and let us denote $\rho(t)=\int_{\cS\times\R_+}n_t(dx,da)$. It is straightforward to check that 
\begin{equation*}
v_t(dx,da)=\exp\left(c\int_{0}^{t}\rho(s)ds\right)n_t(dx,da)
\end{equation*}
is a weak solution of the linear equation
\begin{equation*}
\begin{cases}
\partial_t v_t(x,a)+\partial_a v_t(x,a)  =-D(x,a)v_t(x,a)\\
v_t(x,0) = \cF\left[v_t\right](x),\quad v_0=n_0.
\end{cases}
\end{equation*}
Let $(\lambda^{*},N,\phi)$ be the eigenelements given by Corollary \ref{coroeigen}. By the assumptions of the theorem we obtain  that 
\begin{align*}
\phi(x,a)\geq \frac{(1-p)\inf_{x\in \cS}\phi(x,0)\underline{B}}{\lambda^{*}+\Vert D\Vert_\infty}=:\underline{\phi} >0
\end{align*}
and that 
\begin{align*}
\frac{pB(x,a)k(x,a,y)\phi(x,0)}{\phi(x,a)}\geq \frac{p \underline{B}\underline{k}\underline{\phi}}{\Vert \phi\Vert_{\infty}}=:\underline{\eta}>0.
\end{align*}
We deduce that $\phi$ satisfies (\ref{eq:hypocontr}) and  by applying Proposition \ref{prop:asympdet} that  
\begin{equation}\label{proofconv}
\lim_{t\rightarrow\infty}\Vert e^{-\lambda^{*}t}v_t -m_0 N\Vert_{\text{TV}}=0,
\end{equation} 
and that
\begin{equation}\label{proo:convnorm}
\frac{n_t(dx,da)}{\rho(t)}=\frac{e^{-\lambda^{*}t} v_t(x,a)}{e^{-\lambda^{*}t}\int_{\cS\times\R_+}v_t}\overset{\text{TV}}{\underset{t\rightarrow\infty}{\longrightarrow}} N(x,a)dxda.
\end{equation}
We now study the long-time behaviour of $\rho(t)$. Choosing $f=1$ in (\ref{eq:weakformpde})                                                                                                                                                                                                                                                                                                                                                                                                                                                                                                                                                                                                                                                                                                                                                                                                                                                                                                                                                                                                                                                                                                                                                                                                                                                                                                                                                                                                                                                                                                                                                                                                                                                                                                                                                                                                                                                                                                                                                                                                                                                                                                                                                                                                                                                                                                                                                                                                                                                                                                                                                                                                                                                                                                                                                                                                                                                                                                                                                                                                                                                                                                                                                                                                                                                                                                                                                                                                                                                                                                                                                                                                                                                                                                                                                                                                                                                                                                                                                                                                                                                                                                                                                                                                                                                                                                                                                                                                                                                                                                                                                                                                                                                                                                                                                                                                                                                                                                                                                                                                                                                                                                                                                                                                                                                                                                                                                                                                                                                                                                                                                                                                                                                                                                                                                                                                                                                                                                                                                                                                                                                                                                                                                                                                                                                                                                                                                                                                                                                                                                                                                                                                                                                                                                                                                                                                                                                                                                                                                                                                                                                                                                                                                                                                                                                                                                                                                                                                                                                                                                                                                                                                                                                                                                                                                                                                                                                                                                                                                                                                                                                                                                                                                                                                                                                                                                                                                                                                                                                                                                                                                                                                                                                                                                                                                                                                                                                                                                                                                                                                                                                                                                                                                                                                                                                                                                                                                                                                                                                                                                                                                                                                                                                                                                                                                                                                                                                                                                                                                                                                                                                                                                                                                                                                                                                                                                                                                                                                                                                                                                                                                                                                                                                                                                                                                                                                                                                                                                                                                                                                                                                                                                                                                                                                                                                                                                                                                                                                                                                                                                                                                                                                                                                                                                                                                                                                                                                                                                                                                                                                                                                                                                                                                                                                                                                                                                                                                                                                                                                                                                                                                                                                                                                                                                                                                                                                                                                                                                                                                                                                                                                                                                                                                                                                                                                                                                                                                                                                                                                                                                                                                                                                                                                                                                                                                                                                                                                                                                                                                                                                                                                                                                                                                                                                                                                                                                                                                                                                                                                                                                                                                                                                                                                                                                                                                                                                                                                                                                                                                                                                                                                                                                                                                                                                                                                                                                                                                                                                                                                                                                                                                                                                                                                                                                                                                                                                                                                                                                                                                                                                                                                                                                                                                                                                                                                                                                                                                                                                                                                                                                                                                                                                                                                                                                                                                                                                                                                                                                                                                                                                                                                                                                                                                                                                                                                                                                                                                                                                                                                                                                                                                                                                                                                                                                                                                                                                                                                                                                                                                                                                                                                                                                                                                                                                                                                                                                                                                                                                                                                                                                                                                                                                                                                                                                                                                                                                                                                                                                                                                                                                                                                                                                                                                                                                                                                                                                                                                                                                                                                                                                                                                                                                                                                                                                                                                                                                                                                                                                                                                                                                                                                                                                                                                                                                                                                                                                                                                                                                                                                                                                                                                                                                                                                                                                                                                                                                                                                                                                                                                                                                                                                                                                                                                                                                                     it comes that
\begin{align*}
\frac{\text{d}\rho}{\text{d}t}&(t)\\
&=\int_{\cS\times\R_+}n_t(dx,da)\left((1-p)B(x,a)+pB(x,a)\int_{\cS}k(x,a,y)dy-D(x,a)\right) -c\rho^2(t) \\
&=\rho(t)\left( \mathcal{D}(t)+\lambda^{*}\right) -c\rho^2(t)
\end{align*}
with 
\begin{equation*}
\mathcal{D}(t)=\int_{\cS\times\R_+}\frac{n_t(dx,da)}{\rho(t)}\left((1-p)B(x,a)+pB(x,a)\int_{\cS}k(x,a,y)dy-D(x,a)\right) -\lambda^{*}.
\end{equation*}
Using (\ref{proo:convnorm}) we deduce that $\mathcal{D}(t)\rightarrow 0$ as $t\rightarrow\infty$ and that $\rho(t)\rightarrow \frac{\lambda^{*}}{c}$ as $t\rightarrow\infty$ (using a similar method as in \cite{leman2014influence}) that ends the proof.
\end{proof} 
\section{Discussion}
\subsection{Summary of our results and related literature}
We studied the long-time behaviour of an age structured selection-mutation population dynamics. 
The crowding effect only affects the mortality rate of individuals. This is logistical and does not depend on the trait of the affected individual. The renewal term is linear and non-local. The probability of mutation satisfies $p\in\left]0,1\right[$. These assumptions  are similar to those in \cite{bonnefon2015concentration}. Indeed, we extended results of \cite{bonnefon2015concentration} to an age-structured population. 
In terms of age-structured models, \cite{calsina2013steady} shows the existence of stationary states in a function space for a selection mutation dynamics with age structure, in the pure mutation case. In \cite{nordmann2017dynamics}, the authors study issues of concentration in a model similar to ours. In particular, in the pure selection case ($p=0$), they show the convergence of dynamics on traits maximizing fitness. By considering the case $p\in\left]0,1\right[$, our results complement them. We showed the existence of eventually singular stationary solutions. When the stationary measure admits a continuous and bounded density, we obtained global stability (in total variation distance) of  measure solutions of (\ref{eq:edpintro}). 
\\From an application perspective, our assumptions seem difficult to justify. The mortality rate assumption allows us to reduce the study of the non-linear equation to that of the linear problem and to apply a trick similar as in \cite{burger1988perturbations},\cite{metz2014dynamics},\cite{iannelli2017basic}. 
Linear dynamics is studied using arguments similar to those developed in \cite{perthame2006transport} for the age-structured case. We show the existence of principal eigenmeasures for the direct and dual eigenvalue problems. The analysis of the eigenvalue problem is based on a duality approach for studying spectral properties of some non-local operators \cite{coville2010simple},\cite{coville2013singular}.  
\subsection{Extension to more general models}
In \cite{ackleh2016population}, the authors study a differential equation on measures space describing a very general selection mutation dynamics (without age structure). They consider very general birth rates and death rates. Crowding effects affect per capita birth rate, and the effect on per capita mortality depends on trait of affected individual. The authors show boundedness and persistence results for the solutions. Mortality rates of the same type are considered in \cite{desvillettes2008selection} for a pure selection model. It was noted that there is not, in general, a single steady state. The  approach we use in this article does not allow us to obtain similar results in these very general frameworks. Concerning the renewal term, it could be interesting to consider one of the form
\begin{equation*}
\mathcal{F}\left[n_t\right](dx)=\int_{\cS\times\R_+}B(y,a)\gamma(y,a,dx)n_t(dy,da).
\end{equation*}
Note that in this paper, we considered the particularly case 
$\gamma(y,a,dx)=(1-p)\delta_y(dx)+ p k(y,a,x)dx$. Let's try to apply the same method to the more general renewal condition. We have to solve the direct eigenvalue problem
\begin{equation*}
\begin{cases}
-\partial_a N(x,a) - D(x,a)N(x,a) =\lambda N(x,a)\\
N(x,0)=\int_{\cS\times\R_+}B(y,\alpha)\gamma(y,a,dx)N(y,\alpha).
\end{cases}
\end{equation*} 
Following the same ideas as in Section 3, we obtain that the problem is reduced to studying the properties of the operators $\mathcal{F}_\lambda: \mathcal{M}(\cS)\rightarrow \mathcal{M}(\cS)$ defined by
\begin{equation*}
\mathcal{F}_\lambda\left[\mu\right](dx)=\int_{\cS}K_\lambda(dx,y)\mu(dy).
\end{equation*}
Arguing that $\mathcal{F}_\lambda=(G_\lambda)'$ where $G_\lambda:C(\cS)\rightarrow C(\cS)$ is defined by
\begin{equation*}
G_\lambda\left[f\right](x)=\int_{\cS}K_\lambda(dy,x)f(y)
\end{equation*}
we obtain that $\rho(G_\lambda)=\rho(\mathcal{F}_\lambda)$ is an eigenvalue of $\mathcal{F}_\lambda$ associated with a positive eigenmeasure. Then we have to study continuity and monotonicity properties of the map $\lambda\rightarrow\rho(G_\lambda)$. It does not seem possible to deduce such properties in the general case. It should then be considered on a case-by-case basis. 
\\Indeed, when stationary states admits a non-trivial singular part, the topology induced by total variation distance is stronger to obtain global stability of solutions. In the pure selection case ($p=0$), it has been shown in \cite{nordmann2017dynamics} that the trait distribution converges toward Dirac measures. In the case $p\in\left]0,1\right[$, similar arguments can't be used and the problem stays open.

\section*{Acknowledgments} First, I would like to thank Sylvie Méléard for her continual guidance during this work and her many readings of this manuscript. I also want to thank Gael Raoul and Pierre Collet for many interesting discussions. And finally, I would like to thank Jérôme Coville for the discussion we had on the subject. I acknowledge partial support by the Chaire Modélisation Mathématique et Biodiversité of Veolia Environment - \'Ecole Polytechnique - Museum National d'Histoire Naturelle - FX. This work was also supported by a public grant as part of the investissement d'avenir project, reference ANR-11-LABX-0056-LMH, LabEx LMH.

\bibliographystyle{plain} 
\bibliography{bibli} 

\appendix

\section{Operator Theory}
In this appendix, we recall some well-known results about spectral theory for bounded linear operators on Banach space and positive operators.

\subsection{Resolvent and spectrum}
Let $(X,\Vert .\Vert)$ be a complex Banach space. We denote by $\textbf{B}(X)$ the set of all bounded linear maps from $X$ to $X$. Let $T\in\textbf{B}(X)$. We denote by $\Vert T \Vert_{op} =\sup_{\Vert x\Vert =1}\Vert T(x)\Vert$ the operator norm of $T$. 
\begin{defi}Let $T\in \textbf{B}(X)$.
\begin{itemize}
\item[(i)] The resolvent set of $T$ is $\mathcal{R}(T)=\lbrace z\in \mathbb{C}:(z I -T)^{-1}\in \textbf{B}(X) \rbrace$. For all $z\in\mathcal{R}(T)$, the linear map $R_T(z)= (z I - T)^{-1}$ is called resolvent of $T$ at point $z$.
\item[(ii)]The spectrum of $T$ is $\sigma(T)=\mathbb{C}\setminus \mathcal{R}(T)$ and the spectral radius of $T$ is $\rho(T)=\sup\lbrace \vert z\vert:z\in \sigma(T)\rbrace$.
\item[(iii)]The essential spectrum of $T$ is the set $\sigma_e(T)$ of $z\in\sigma(T)$ which satisfy at least one of the following condition: (1) the range of $z I -T$ is not closed; (2) $z$ is not isolated in $\sigma(T)$; (3)  $\cup_{n\geq 1} \ker((z I -T)^n)$ is infinite dimensional. The essential spectral radius of $T$ is $\rho_e(T)=\sup\lbrace \vert z\vert:z\in \sigma_e(T)\rbrace$ 
\end{itemize}
\end{defi}
\begin{rema}
There are different definition of the essential spectrum in the literature, which are not equivalent. The definition we choose has been introduced by Browder \cite{browder1961spectral}. 
\end{rema}
\begin{defi}
Let $z_0$ be a pole of the resolvent. Let
\begin{equation}
R_T(z)=\sum_{k=-m}^{+\infty}a_k(z -z_0)^k\quad
\end{equation}
be the Laurent expansion of $R_T$ near $z_0$ where $a_k$ are linear operators on $X$ and $a_{-m}\neq 0$. The integer $m$ is the order of the pole $z_0$. Then $P(T):= a_{-1}$ is the projector onto the space $\ker((z I -T)^m)$. The dimension of $\ker((z I -T)^m)$ is called algebraic multiplicity of $z_0$. 
\end{defi}
The following characterisation of the essential spectrum is very useful. It is proved in \cite[Lemma 17]{browder1961spectral}.
\begin{prop}\label{prop:browder}
Let $T\in\textbf{B}(X)$ and $z\in\sigma(T)$. Then, $z \notin \sigma_e(T)$ if and only if for some $m\in\N^{*}$, $z$ is a pole of the resolvent of order $m$ such that $\ker((z I -T)^m)$ is finite dimensional. 
\end{prop}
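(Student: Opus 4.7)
The plan is to prove the two implications separately, with the Riesz spectral projection associated to $\{z\}$ as the central tool throughout.

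For the easy direction ($\Leftarrow$), suppose $z$ is a pole of the resolvent of order $m$ and $\ker((zI-T)^m)$ is finite dimensional. Being a pole is by definition an isolated singularity of $R_T$, so $z$ is an isolated point of $\sigma(T)$, killing condition (2). The Laurent coefficient $a_{-1}$ is the Riesz projection $P$ onto the spectral subspace at $z$, and a standard computation with the Laurent expansion identifies $P(X) = \ker((zI-T)^m)$, so $P(X)$ is finite dimensional. Decomposing $X = P(X)\oplus(I-P)(X)$ into $T$-invariant closed subspaces, $zI-T$ is invertible on $(I-P)(X)$ and acts on the finite dimensional $P(X)$; hence $\operatorname{range}(zI-T)$ is the sum of a closed subspace and a finite dimensional one, so closed, killing condition (1). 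Finally $\bigcup_n \ker((zI-T)^n)\subseteq P(X)$ is finite dimensional, killing condition (3).

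For the harder direction ($\Rightarrow$), assume the three Browder conditions hold. Using isolation of $z$, define the Riesz projection $P=\frac{1}{2\pi i}\oint_\gamma R_T(\zeta)\,d\zeta$ on a small circle $\gamma$ around $z$, and set $M=P(X)$, $N=(I-P)(X)$. Both are closed $T$-invariant subspaces; writing $T_1=T|_M$ and $T_2=T|_N$, standard functional calculus gives $\sigma(T_1)=\{z\}$ and $\sigma(T_2)=\sigma(T)\setminus\{z\}$. Setting $A=zI-T_1$, the operator $A$ is quasi-nilpotent on $M$, while $zI-T_2$ is invertible on $N$. Consequently $\ker((zI-T)^n)=\ker(A^n)$ for every $n$, so the finite dimensionality of $\bigcup_n \ker((zI-T)^n)$ forces the ascending chain $\ker(A^n)$ to stabilize at some index $m_0$; the classical ascent argument then shows $A$ is injective on $\operatorname{range}(A^{m_0})$. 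Similarly, decomposing $\operatorname{range}(zI-T)=\operatorname{range}(A)\oplus N$ with $N$ closed, the closed-range hypothesis for $zI-T$ transfers to closedness of $\operatorname{range}(A)$ in $M$.

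The main obstacle is the remaining step: promoting \emph{finite ascent plus closed range plus quasi-nilpotency} into actual \emph{nilpotency}, which will force $M=\ker(A^{m_0})$ to be finite dimensional and identify $z$ as a pole of order $m_0$. I would do this via Kato's ascent-descent framework: arguing inductively that closedness of $\operatorname{range}(A)$ together with finiteness of the ascent propagates to closedness of every $\operatorname{range}(A^n)$, and that the descent $d(A)=\inf\{n:\operatorname{range}(A^{n+1})=\operatorname{range}(A^n)\}$ must then be finite and equal to $m_0$. Once this equality is in hand, $A$ restricted to the closed subspace $\operatorname{range}(A^{m_0})$ is both injective (by the ascent argument) and surjective (by the descent equality), hence by the open mapping theorem is invertible on $\operatorname{range}(A^{m_0})$. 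But $\sigma\bigl(A|_{\operatorname{range}(A^{m_0})}\bigr)\subseteq\sigma(A)=\{0\}$ by quasi-nilpotency, which is compatible with invertibility only if $\operatorname{range}(A^{m_0})=\{0\}$. Therefore $A^{m_0}=0$, $M=\ker(A^{m_0})$ is finite dimensional, and the Laurent expansion of $R_T$ near $z$ terminates at order $m_0$, exhibiting $z$ as a pole with finite dimensional generalized eigenspace, as required.
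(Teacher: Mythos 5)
The paper does not actually prove this proposition: it is quoted from Browder's article (Lemma 17 of the cited reference), so there is no internal argument to compare yours against. On its own terms, your backward implication is complete and correct: isolation of $z$, the identification $P(X)=\ker((zI-T)^m)$ for a pole of order $m$, the containment $\bigcup_n\ker((zI-T)^n)\subseteq P(X)$, and the closedness of $\operatorname{range}(zI-T)$ as the sum of a closed and a finite-dimensional subspace are all standard and correctly assembled.

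The forward implication has the right architecture (Riesz decomposition, reduction to the quasi-nilpotent operator $A=zI-T|_{M}$, and the closing observation that an invertible quasi-nilpotent operator can only live on the zero space), but the pivotal step is unjustified and, as you state it, false. You claim that finite ascent together with closedness of $\operatorname{range}(A)$ and of the ranges of its powers forces the descent to be finite and equal to the ascent. The unilateral right shift on $\ell^2$ has ascent $0$, trivial kernel, closed range for every power, and infinite descent, so no ascent--descent bookkeeping can produce finite descent from those hypotheses alone; the quasi-nilpotency of $A$ must be used \emph{before} this point, not only in the final contradiction. The standard repair: since $\ker A$ is finite dimensional and $\operatorname{range}(A)$ is closed, $A$ is upper semi-Fredholm; since $\sigma(A)=\{0\}$, the operator $A-\lambda I$ is invertible, hence of index $0$, for all small $\lambda\neq 0$, and local constancy of the index on the semi-Fredholm class forces $\operatorname{ind}(A)=0$, so $A$ is Fredholm. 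A Fredholm operator is invertible modulo compacts, which is impossible for a quasi-nilpotent operator on an infinite-dimensional space; hence $M$ is finite dimensional, $A$ is nilpotent of order $m_0$, and your terminating Laurent expansion finishes the proof. Alternatively, once $\operatorname{ind}(A)=0$ is in hand, your ascent argument gives $\ker(A^{m_0})\cap\operatorname{range}(A^{m_0})=\{0\}$, index additivity gives $\dim\ker(A^{m_0})=\operatorname{codim}\operatorname{range}(A^{m_0})$, hence $M=\ker(A^{m_0})\oplus\operatorname{range}(A^{m_0})$, and your concluding invertible-plus-quasi-nilpotent contradiction applies verbatim to $\operatorname{range}(A^{m_0})$.
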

The following result is adapted from Kato \cite[Ch.IV §4. Thm 3.16]{kato2013perturbation}.
\begin{prop}\label{annex:kato}
Let $T\in\textbf{B}(X)$ and $z_0\in \sigma(T)\setminus\sigma_e(T)$. We denote by $\alpha_T(z_0)$ the algebraic multiplicity of $z_0$. Let $\epsilon >0$. There is $\delta > 0$ such that if $\Vert T - S\Vert_{op} <\delta$, the two following assertions are satisfied:
\begin{itemize}
\item[(a)]There is $z\in\sigma(S)\setminus \sigma_e(S)$ such that $\alpha_S(z)=\alpha_T(z_0)$.
\item[(b)]$\Vert P(T)- P(S)\Vert_{op}<\epsilon$.
\end{itemize}
\end{prop}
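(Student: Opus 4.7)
The plan is to prove the proposition following Kato's standard approach via Riesz spectral projections and contour integrals. Since $z_0 \in \sigma(T) \setminus \sigma_e(T)$, Proposition \ref{prop:browder} tells us that $z_0$ is an isolated pole of the resolvent $R_T$ of finite algebraic multiplicity $\alpha_T(z_0)$, with $\ker((z_0 I - T)^m)$ finite dimensional for some $m \in \N^*$. The first step is to fix a radius $r > 0$ so small that the closed disk $\overline{D(z_0,r)}$ meets $\sigma(T)$ only at $z_0$, and to let $\Gamma$ be the positively oriented boundary circle of $D(z_0,r)$.

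Next, I would exploit the Riesz formula $P(T) = \frac{1}{2\pi i}\oint_\Gamma R_T(\zeta)\,d\zeta$, which coincides with the residue $a_{-1}$ in the Laurent expansion used to define $P(T)$ in the appendix, and whose range is the finite-dimensional space $\ker((z_0 I - T)^m)$ of dimension $\alpha_T(z_0)$. The key perturbation estimate comes from a Neumann expansion: setting $M = \sup_{\zeta \in \Gamma} \Vert R_T(\zeta)\Vert_{op}$ (finite by continuity of $R_T$ on the compact set $\Gamma$), any $S$ with $\Vert T - S\Vert_{op} < 1/(2M)$ satisfies, for all $\zeta \in \Gamma$,
\begin{equation*}
R_S(\zeta) = R_T(\zeta)\sum_{k=0}^{\infty}\bigl((S-T)R_T(\zeta)\bigr)^k,
\end{equation*}
so that $\Gamma \subset \mathcal{R}(S)$ and $\Vert R_S(\zeta) - R_T(\zeta)\Vert_{op} = O(\Vert T - S\Vert_{op})$ uniformly on $\Gamma$. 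Defining $P(S) = \frac{1}{2\pi i}\oint_\Gamma R_S(\zeta)\,d\zeta$, integration along $\Gamma$ then yields $\Vert P(T) - P(S)\Vert_{op} = O(\Vert T - S\Vert_{op})$, and (b) follows by choosing $\delta$ sufficiently small.

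For (a), I would observe that $P(S)$ is a projection commuting with $S$, whose range $X_S$ is a closed $S$-invariant subspace. The classical fact that two projections with $\Vert P - Q\Vert_{op} < 1$ have ranges of equal dimension forces the range of $P(S)$ to have dimension $\alpha_T(z_0) < \infty$ once $\delta$ is small. Consequently $S|_{X_S}$ acts on a finite-dimensional space and its spectrum consists of finitely many eigenvalues of $S$ lying inside $D(z_0,r)$, with total algebraic multiplicity equal to $\alpha_T(z_0)$. Each such eigenvalue is a pole of $R_S$ with finite-dimensional generalised eigenspace, and by Proposition \ref{prop:browder} it belongs to $\sigma(S) \setminus \sigma_e(S)$.

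The main subtlety is that a generic perturbation can split $z_0$ into several distinct eigenvalues of $S$, none of which individually need carry multiplicity $\alpha_T(z_0)$; the natural conclusion is only that the total algebraic multiplicity of $\sigma(S) \cap D(z_0,r)$ equals $\alpha_T(z_0)$. Statement (a) should therefore be read in this aggregated sense, or applied as in the body of the paper where $\alpha_T(z_0) = 1$, so that no splitting is possible and a unique simple perturbed eigenvalue exists. This splitting phenomenon is the delicate point; the remaining work is a routine contour manipulation driven by the Neumann series estimate above.
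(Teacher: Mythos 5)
Your proof is correct and follows exactly the Riesz-projection/contour-integral argument of Kato's Theorem IV-3.16, which the paper cites without reproducing a proof, so you have simply filled in the standard details of the same approach. Your remark that assertion (a) must be read in the aggregated sense (the perturbed eigenvalue can split into a group of eigenvalues of total multiplicity $\alpha_T(z_0)$ when $\alpha_T(z_0)>1$) is accurate and worth keeping in mind; the paper only ever invokes the proposition with $\alpha_T(z_0)=1$, where no splitting can occur.
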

\subsection{Ordered Banach space}
Let $(X, \Vert .\Vert)$ be a real Banach space. Let $C\subset X$ be a positive cone. We denote always $\textbf{B}(X)$ for the set of bounded linear maps on $X$.
\begin{defi}\label{defi:irred}Let $T\in \textbf{B}(X)$.
\begin{itemize}
\item[(i)]The operator $T$ is positive if $T(C)\subset C$.
\item[(ii)]The operator $T$ is irreducible if $T$ is positive and for some scalar $z > \rho(T)$ and for each non-zero $u\in C$, the element $\sum_{k=1}^{+\infty}z^{-n}T^n(u)$ is quasi interior to $C$ (see \cite{schaefer1971graduate} for the definition of quasi interior point).
\item[(iii)]Let $T,S\in B(X)$ be positive. We denote $T\leq S$ if the bounded linear map $S-T$ is positive.
\end{itemize}
\end{defi}
The following proposition gives some monotonicity properties of the spectral radius. The point (i) is proved in \cite[Theorem 1.1]{burlando1991monotonicity}. The point (ii) is proved in \cite[Theorem 3.9]{gao2013extensions}.
\begin{prop}\label{annex:montoto}Let $S,T\in \textbf{B}(X)$ be positive such that $S\leq T$. We have
\begin{itemize}
\item[(i)]$\rho(S)\leq \rho(T)$;
\item[(ii)]Assume moreover that $\rho(T)$ is a pole of the resolvent of $T$. Then we have either $T=S$ or $\rho(S)<\rho(T)$.
\end{itemize}
\end{prop}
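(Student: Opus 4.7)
The plan for part (i) is to invoke Gelfand's spectral radius formula $\rho(T) = \lim_{n\to\infty}\|T^n\|^{1/n}$ after comparing iterates. From $0 \le S \le T$ I would show by induction that $0 \le S^n \le T^n$ for every $n\ge 1$: the identity $T^{n+1} - S^{n+1} = T(T^n - S^n) + (T - S)S^n$ exhibits both summands as compositions of positive operators, hence positive. Assuming the norm is monotone on the cone $C$ (automatic for Banach lattices, and achievable by an equivalent renorming when $C$ is normal), this gives $\|S^n\| \le \|T^n\|$, and passing to $n$-th roots yields $\rho(S) \le \rho(T)$.

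For part (ii), the plan is to assume $\rho(T) = \rho(S) = \lambda$ and derive $T = S$ by combining the resolvent identity with the pole structure of $R_T$ at $\lambda$. For real $z > \lambda$ the Neumann series expresses $R_T(z)$ and $R_S(z)$ as convergent sums of positive operators, so both are positive, and the identity $R_T(z) - R_S(z) = R_T(z)(T - S)R_S(z)$ combined with $T - S \ge 0$ produces the operator inequality $R_T(z) \ge R_S(z) \ge 0$. Since $\lambda$ is a pole of order $m$ for $R_T$, the operator $(z - \lambda)^m R_T(z)$ has a nontrivial limit as $z \to \lambda^+$, which is positive (as a limit of positive operators), so the associated Riesz projection $P_T$ is positive. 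Proposition \ref{prop:schaefermain}(i) applied to $T'$ provides a positive functional $\phi$ with $T'\phi = \lambda\phi$. Multiplying the resolvent inequality by $(z - \lambda)^m$ and taking $z \to \lambda^+$ transfers the comparison to the principal parts: $\lambda$ is also a pole of $R_S$, with $0 \le P_S \le P_T$. On the range of $P_T$ one reduces to a finite-dimensional Perron-Frobenius problem, where the strict monotonicity of the spectral radius for positive matrices forces $S$ and $T$ to agree on this invariant subspace; propagating the equality across the spectral decomposition afforded by $P_T$ then gives $T = S$ on all of $X$.

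The main obstacle is clearly part (ii), and specifically the step from scalar/resolvent comparisons to the operator equality $T = S$. Pairing with a single positive eigenvector and a single positive eigenfunctional only controls a one-dimensional subspace, whereas $T = S$ is a condition on the entire space. The real content lies in the Riesz projection machinery at the pole $\lambda$: one must show $P_S = P_T$, establish that $S$ and $T$ coincide on $\mathrm{Im}(P_T)$ via a finite-dimensional positive-matrix argument, and then exploit the fact that on $\ker(P_T)$ (which is invariant for both $T$ and $S$ because $R_T(z) \ge R_S(z) \ge 0$ constrains the corresponding reduced operators) the spectral radii are strictly below $\lambda$, so that the positive operator $T - S$ has nowhere to hide. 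This is precisely where the pole hypothesis on $\rho(T)$ is indispensable: without it, the spectrum could accumulate at $\lambda$ and neither $P_T$ nor the reduction to a finite-dimensional Perron-Frobenius statement would be available, which matches the way the proof in \cite{gao2013extensions} exploits the assumption.
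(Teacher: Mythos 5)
First, note that the paper does not actually prove this proposition: it recalls it in the appendix and refers to \cite[Theorem 1.1]{burlando1991monotonicity} for (i) and to \cite[Theorem 3.9]{gao2013extensions} for (ii), so there is no in-paper argument to compare against. Your treatment of (i) is the standard one and is essentially correct: the induction $0\le S^n\le T^n$ via $T^{n+1}-S^{n+1}=T(T^n-S^n)+(T-S)S^n$ is fine, and the passage to $\Vert S^n\Vert\leq M\Vert T^n\Vert$ and then to Gelfand's formula is legitimate provided the cone is normal and generating; you are right to flag this, since it is not among the stated hypotheses, but it holds in every space where the proposition is applied ($C(\cS)$ and $\mathcal{M}(\cS)$ are Banach lattices).

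Part (ii), however, contains a genuine gap, and the problem is not merely technical: the statement you are trying to prove is false without an irreducibility hypothesis on $T$. Take $X=\R^2$ with the cone $\R_+^2$ and
\begin{equation*}
T=\begin{pmatrix}1&1\\0&0\end{pmatrix},\qquad S=\begin{pmatrix}1&0\\0&0\end{pmatrix}.
\end{equation*}
Then $S$ and $T$ are positive, $T-S$ is positive, $\rho(S)=\rho(T)=1$, the point $1$ is a simple pole of $R_T$, and yet $S\neq T$. This shows exactly where your sketch breaks down: the claim that on $\mathrm{Im}(P_T)$ ``the strict monotonicity of the spectral radius for positive matrices forces $S$ and $T$ to agree'' is not a theorem --- strict monotonicity ($0\le A\le B$, $A\neq B$ $\Rightarrow$ $\rho(A)<\rho(B)$) requires $B$ irreducible, as the example demonstrates. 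The classical result (Marek, Schaefer, and the actual content of \cite[Theorem 3.9]{gao2013extensions}) carries an irreducibility assumption on $T$, which the paper's statement omits but which is present every time the paper invokes (ii) (e.g.\ Step 3 of the proof of Proposition \ref{prop:malthus}, where the dominating operator $r_{\lambda_1}+J_{\lambda_1}$ is irreducible); the known proofs make essential use of the quasi-interior eigenvector of $T$ at $\rho(T)$ that irreducibility plus the pole hypothesis provide. Even granting irreducibility, two further steps of your outline need repair: $\mathrm{Im}(P_T)$ is finite-dimensional only if the pole has finite algebraic multiplicity, which is not assumed; and the inequality $0\le(z-\lambda)^mR_S(z)\le(z-\lambda)^mR_T(z)$ for real $z>\lambda$ does not by itself show that $\lambda$ is a pole of $R_S$, nor does it compare the residues $P_S$ and $P_T$ (it compares, at best, leading Laurent coefficients, and only if the orders happen to coincide), so ``$0\le P_S\le P_T$'' is unjustified as stated.
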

We deduce a result of upper-semi continuity of the spectral radius. It is classical. 
\begin{lemm}\label{lemm:uppersemi}
Let $T\in\textbf{B}(X)$ be positive and let $(T_k)_{k\geq 0}$ be a non-increasing sequence of $\textbf{B}(X)$ such that $\Vert T_k - T\Vert_{\infty}\rightarrow 0$ when $k\rightarrow \infty$. Then $\rho(T_k)\underset{k\rightarrow\infty}{\rightarrow }\rho(T)$.
\end{lemm}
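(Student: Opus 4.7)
The plan is to establish $\rho(T_k)\to\rho(T)$ by sandwiching the limit from both sides, combining order-theoretic monotonicity with the standard upper semicontinuity of the spectral radius under norm convergence.

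First I would exploit the order structure to obtain the lower bound. Since the sequence is non-increasing, $T_k - T_j$ is a positive operator whenever $j\geq k$. Because the natural positive cones on the spaces used in this paper ($C(\cS)$ with the sup norm and $\mathcal{M}(\cS)$ with the total variation norm) are norm-closed, passing to the limit $j\to\infty$ in $T_k - T_j \geq 0$ yields $T_k \geq T$ for every $k$. Proposition \ref{annex:montoto}(i) then gives $\rho(T_0)\geq \rho(T_1)\geq\cdots\geq\rho(T)$, so $(\rho(T_k))_{k\geq 0}$ is a non-increasing sequence bounded below by $\rho(T)$ and therefore converges to some limit $\rho^{*}\geq \rho(T)$.

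For the reverse inequality I would use Gelfand's spectral radius formula $\rho(S)=\lim_{n\to\infty}\Vert S^n\Vert_{op}^{1/n}$ together with $\rho(S)\leq \Vert S^n\Vert_{op}^{1/n}$ valid for every $n$. Fix $\epsilon>0$ and choose $n$ large enough so that $\Vert T^n\Vert_{op}^{1/n}<\rho(T)+\epsilon/2$. Since $T_k\to T$ in operator norm, the $n$-th power map is continuous on $\textbf{B}(X)$ (by the usual telescoping estimate together with boundedness of $\Vert T_k\Vert_{op}$), hence $\Vert T_k^n\Vert_{op}\to \Vert T^n\Vert_{op}$. Therefore $\Vert T_k^n\Vert_{op}^{1/n}<\rho(T)+\epsilon$ for all $k$ sufficiently large, and applying $\rho(T_k)\leq \Vert T_k^n\Vert_{op}^{1/n}$ yields $\rho(T_k)\leq \rho(T)+\epsilon$ eventually. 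Since $\epsilon$ was arbitrary, $\rho^{*}\leq \rho(T)$, which combined with the lower bound gives $\rho^{*}=\rho(T)$.

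There is no serious obstacle to this argument; it is essentially just the folklore fact that the spectral radius is upper semicontinuous with respect to operator-norm convergence, combined with monotonicity in the positive cone to force the limit to agree with $\rho(T)$ from below. The only mildly delicate point is the closedness of the positive cone needed to transfer the non-increasing property to the inequality $T_k\geq T$, but this holds in all settings considered in this paper.
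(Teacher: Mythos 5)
Your proof is correct, but the key step (the upper bound $\limsup_k\rho(T_k)\leq\rho(T)$) is handled by a genuinely different mechanism than in the paper. The paper argues by contradiction: it notes that for a positive operator the spectral radius belongs to the spectrum, so each $\rho(T_k)I-T_k$ is singular; since $\rho(T_k)I-T_k\to\rho^{*}I-T$ in operator norm and the set of non-invertible operators is closed, $\rho^{*}I-T$ would be singular, contradicting $\rho^{*}>\rho(T)$. You instead invoke Gelfand's formula $\rho(S)=\inf_n\Vert S^n\Vert_{op}^{1/n}$ together with norm-continuity of $S\mapsto S^n$, which is the textbook proof of upper semicontinuity of the spectral radius. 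Your route has the advantage of not needing the (Banach-lattice-type) fact that $\rho(T_k)\in\sigma(T_k)$ — positivity enters only through the monotonicity giving the lower bound — whereas the paper's route leans on that spectral fact but avoids Gelfand's formula entirely. You are also slightly more careful than the paper on the lower bound, explicitly deriving $T_k\geq T$ from closedness of the positive cone before applying Proposition \ref{annex:montoto} (i); this is a genuine (if minor) gap-filling, since the paper's hypothesis only states that the sequence is non-increasing and converges to $T$. Both arguments are valid in the settings where the lemma is applied.
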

\begin{proof}
By Proposition \ref{annex:montoto} (i), the sequence $\rho(T_k)$ is non-increasing and is bounded below by $\rho(T)$. Then, it converges to a limit $\rho^{*}\geq \rho(T)$. Assume that $\rho^{*}>\rho(T)$. Since the spectral radius is an element of the spectrum, we deduce that for all $k\in\N$ the operator $\rho(T_k)I -T_k$ is singular (e.g $(\rho(T_k)I -T_k)^{-1}$ is not bounded). Moreover, the set of singular operators being closed, we deduce, taking the limit $k\rightarrow\infty$ that $\rho^{*}I -T$ is singular which is absurd since $\rho^{*}>\rho(T)$.
\end{proof}
\subsection{The space \texorpdfstring{$(C(\cS),\Vert .\Vert_{\infty})$}{Lg}}
Let $\cS$ be a compact subset of $\R^d$. We now give some results on the Banach space $(C(\cS), \Vert .\Vert_{\infty})$ where $C(\cS)$ denotes the set of continuous functions from $\cS$ to $\R$ and $\Vert .\Vert_{\infty}$ denotes the uniform norm. We denote by $C^{+}(\cS)$ the cone of non-negative functions on $\cS$. We recall that the space of (signed) Radon measure $\mathcal{M}(\cS)$ is the topological dual (the space of continuous linear form) of $C(\cS)$ and that the set of positive Radon measure $\mathcal{M}^{+}(\cS)$ is the dual cone of $C^{+}(\cS)$. For any $T\in \textbf{B}(C(\cS))$, we denote by $T'\in \textbf{B}(\mathcal{M}(\cS))$ his adjoint. 
\\The next result is easily adapted from \cite[ Appendix §2.2.6]{schaefer1971graduate} (it was originally introduced by Krein-Rutman \cite{krein1948linear}). It is a generalisation of Perron-Frobenius Theorem for positive matrices to the infinite-dimensional framework.
\begin{prop}
Let $T\in \textbf{B}(C(\cS))$ such that $T\geq 0$. Then $\rho(T)$ is an eigenvalue of $T'$ associated with a positive eigenmeasure $\mu\in\mathcal{M}^{+}(\cS)$.
\end{prop}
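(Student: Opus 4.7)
The plan is to adapt the classical Krein--Rutman argument, using resolvent blow-up together with weak$^*$ compactness in $\mathcal{M}(\cS)$. Three structural features of the setting will drive the proof: first, for $\lambda > \rho(T)$ the Neumann series $R(\lambda, T) = \sum_{n \geq 0} \lambda^{-n-1} T^n$ converges in operator norm and, because $T \geq 0$, defines a positive bounded operator on $C(\cS)$; second, $C(\cS)$ admits the order unit $\mathbf{1}$ (the constant function $1$), so that positivity forces $\|R\|_{\mathrm{op}} = \|R\mathbf{1}\|_\infty$ for any positive $R \in \textbf{B}(C(\cS))$ (via $-\|f\|_\infty \mathbf{1} \leq f \leq \|f\|_\infty \mathbf{1}$ and the positivity of $R$); third, by Banach--Alaoglu the closed unit ball of $\mathcal{M}(\cS) = C(\cS)'$ is weak$^*$-compact, the cone $\mathcal{M}^+(\cS)$ is weak$^*$-closed, and the functional $\mu \mapsto \mu(\mathbf{1})$ is weak$^*$-continuous since $\mathbf{1} \in C(\cS)$ and $\cS$ is compact.

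Setting $r = \rho(T)$, I would first use that $r \in \sigma(T) = \sigma(T')$ lies on the boundary of $\sigma(T)$ (since $\sigma(T) \subset \{|z| \leq r\}$) to deduce that $\|R(\lambda, T)\|_{\mathrm{op}} \to \infty$ as $\lambda \downarrow r$; otherwise a Neumann expansion of $R(\cdot, T)$ around a nearby $\lambda_0 > r$ would place $r$ in the resolvent set. Choosing $\lambda_n \downarrow r$ with $\|R(\lambda_n, T)\|_{\mathrm{op}} \to \infty$, I would then use the order-unit identity to pick $x_n \in \cS$ with $R(\lambda_n, T)\mathbf{1}(x_n) \geq \tfrac{1}{2}\|R(\lambda_n, T)\|_{\mathrm{op}}$, and form the positive Radon measure $\nu_n := R(\lambda_n, T)' \delta_{x_n} \in \mathcal{M}^+(\cS)$, whose total mass equals $\nu_n(\mathbf{1}) = R(\lambda_n, T)\mathbf{1}(x_n) \to \infty$. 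Normalising to the probability measure $\mu_n := \nu_n / \nu_n(\mathbf{1})$, the resolvent identity $(\lambda_n I - T) R(\lambda_n, T) = I$ yields
\begin{equation*}
(\lambda_n I - T)' \mu_n \;=\; \frac{\delta_{x_n}}{\nu_n(\mathbf{1})} \;\xrightarrow[n\to\infty]{}\; 0 \quad \text{in total variation norm.}
\end{equation*}

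Banach--Alaoglu then provides a subsequence $\mu_{n_k} \rightharpoonup \mu$ weakly$^*$, with $\mu \in \mathcal{M}^+(\cS)$ by weak$^*$-closedness of the positive cone. Weak$^*$-continuity of $T'$ (inherited from the boundedness of $T$) combined with $\lambda_{n_k} \to r$ allows passage to the limit in the identity above, producing $(rI - T)' \mu = 0$, i.e.\ $T'\mu = r\mu$. The hard part, as in every Krein--Rutman-type argument, is to rule out $\mu = 0$: weak$^*$ limits of normalised measures can collapse to the zero measure because the total variation norm is not weak$^*$-continuous. Here the difficulty is closed off cleanly by the third structural feature above: $\mu_{n_k}(\mathbf{1}) = 1$ for every $k$ and $\mathbf{1} \in C(\cS)$, so weak$^*$-continuity of $\mu \mapsto \mu(\mathbf{1})$ forces $\mu(\mathbf{1}) = 1$, hence $\mu \neq 0$, and the proof is complete.
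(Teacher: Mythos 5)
The paper gives no proof of this proposition at all---it defers to Schaefer's appendix (\S 2.2.6) and to Krein--Rutman---and your argument is a correct rendition of precisely that classical proof: resolvent blow-up as $\lambda\downarrow\rho(T)$, the order-unit identity $\Vert R\Vert_{\mathrm{op}}=\Vert R\mathbf{1}\Vert_\infty$ to manufacture positive measures $R(\lambda_n,T)'\delta_{x_n}$ of exploding mass, and weak$^*$ compactness together with the weak$^*$-continuous functional $\mu\mapsto\mu(\mathbf{1})$ to prevent the normalised limit from collapsing to zero. The one step that silently leans on positivity is the blow-up itself: that $\rho(T)\in\sigma(T)$ (equivalently, that $\Vert R(\lambda_n,T)\Vert_{\mathrm{op}}$ cannot stay bounded along a real sequence $\lambda_n\downarrow\rho(T)$) is false for general bounded operators and needs the inequality $\vert R(z,T)f\vert\leq R(\vert z\vert,T)\vert f\vert$ for $\vert z\vert>\rho(T)$, which shows that boundedness along the real approach would push the entire circle $\vert z\vert=\rho(T)$ into the resolvent set and contradict the definition of the spectral radius (the case $\rho(T)=0$ being immediate since $\sigma(T)\neq\emptyset$). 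With that one remark supplied, your proof is complete and is exactly the argument the paper's citation points to.
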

The following result is proved in \cite[Appendix §3.3.3]{schaefer1971graduate}. It precises the analogy with the Perron-Frobenius theory.
\begin{prop}\label{annex:spectral}
Let $T\in \textbf{B}(C(\cS))$ such that $T$ is irreducible. Then we have:
\begin{itemize}
\item[(i)]The spectral radius $\rho(T)$ is the only possible eigenvalue associated with a non-negative eigenfunction.
\item[(ii)]Assume moreover that $\rho(T)$ is a pole of the resolvent. Then, $\rho(T)$ is a pole of order one with algebraic multiplicity equals to one.
\end{itemize}
\end{prop}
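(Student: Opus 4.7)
The plan is to combine the Krein--Rutman statement that precedes the proposition (giving a positive eigenmeasure of $T'$ at $\rho(T)$) with the defining property of irreducibility in order to (a) promote any non-negative eigenfunction to a strictly positive one, then (b) use a duality pairing to pin down its eigenvalue, and finally (c) rule out a higher-order pole by the same duality. I first record the main technical lemma on which (i) and (ii) both rest.

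\textbf{Key step (strict positivity of non-negative eigenfunctions).} Suppose $u \in C^{+}(\cS)\setminus\{0\}$ satisfies $Tu = \lambda u$ for some $\lambda\in\R$. Since $Tu\geq 0$ while $\lambda u$ is non-positive when $\lambda<0$, one first rules out $\lambda<0$. For $\lambda=0$ the identity $T^{n}u=0$ for $n\geq 1$ forces $\sum_{n\geq 1}z^{-n}T^{n}u=0$, which cannot be quasi-interior, contradicting irreducibility; hence $\lambda>0$. For $\lambda>0$ and any $z>\rho(T)\geq\lambda$, the series collapses to $\frac{\lambda}{z-\lambda}u$, which must be quasi-interior in $C^{+}(\cS)$; since the quasi-interior of $C^{+}(\cS)$ consists precisely of the strictly positive functions on the compact set $\cS$, this yields $u(x)>0$ for all $x\in\cS$, hence a uniform lower bound $u\geq\varepsilon>0$.

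\textbf{Proof of (i).} Apply the Krein--Rutman statement (the proposition just before \ref{annex:spectral} in the appendix) to $T$: there exists $\mu\in\mathcal{M}^{+}(\cS)\setminus\{0\}$ with $T'\mu=\rho(T)\mu$. For any non-negative eigenfunction $Tu=\lambda u$, the Key step gives $u\geq\varepsilon>0$, so $\langle\mu,u\rangle\geq\varepsilon\,\mu(\cS)>0$. Then the duality identity
\[
\lambda\langle\mu,u\rangle=\langle\mu,Tu\rangle=\langle T'\mu,u\rangle=\rho(T)\langle\mu,u\rangle
\]
forces $\lambda=\rho(T)$.

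\textbf{Proof of (ii), geometric multiplicity.} Fix a strictly positive eigenfunction $u_{0}$ at $\rho(T)$ (produced by (i) applied to, e.g., the positive Krein--Rutman eigenfunction of $T$ guaranteed at a pole of the resolvent by the general theory, or else extracted from the residue $P(T)u$ for any $u\in C^{+}(\cS)$ with $\langle\mu,u\rangle>0$). If $\dim\ker(\rho(T)I-T)\geq 2$, pick a real eigenfunction $v$ independent of $u_{0}$, and consider $u_{0}+tv$. For $t$ close to $0$ this is strictly positive, while for $|t|$ large enough it is not non-negative (either because $v$ is sign-changing or because $v$ itself would be strictly signed, in which case one applies the same argument with $-v$). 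The extremal value $t^{*}$ at which $u_{0}+tv$ first touches $0$ produces a non-zero non-negative eigenfunction at $\rho(T)$ that vanishes at some point of $\cS$; this contradicts the Key step above. Hence $\dim\ker(\rho(T)I-T)=1$.

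\textbf{Proof of (ii), pole of order one.} Suppose the pole order $m\geq 2$. Then there exists $v\in C(\cS)$ with $(\rho(T)I-T)^{2}v=0$ and $u:=(\rho(T)I-T)v\neq 0$. By the geometric simplicity just proven, $u=c\,u_{0}$ for some $c\neq 0$; WLOG $c=1$, so $Tv=\rho(T)v-u_{0}$. Pair with the positive eigenmeasure $\mu$:
\[
-\langle\mu,u_{0}\rangle=\langle\mu,Tv-\rho(T)v\rangle=\langle T'\mu-\rho(T)\mu,v\rangle=0,
\]
which contradicts $\langle\mu,u_{0}\rangle\geq\varepsilon\mu(\cS)>0$. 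Hence $m=1$, and combined with the geometric simplicity, the algebraic multiplicity equals one.

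\textbf{Anticipated difficulties.} The cleanest point is (i) once the Key step is in hand; the subtlety there is that irreducibility gives quasi-interiority of a \emph{series}, and we must verify that the series really collapses to a positive multiple of $u$, which requires $\lambda>0$ and $z>\lambda$ simultaneously --- both secured by first excluding $\lambda\leq 0$ and then using $z>\rho(T)\geq\lambda$. The most delicate point of (ii) is the geometric simplicity: one must ensure that the extremal linear combination $u_{0}+t^{*}v$ is non-zero, which follows from the linear independence of $u_{0}$ and $v$. If one prefers to work over $\C$, one reduces to the real case by taking real and imaginary parts of any complex eigenfunction at the real eigenvalue $\rho(T)$. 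The pole-order argument is then essentially a one-line Fredholm-type orthogonality against $\mu$.
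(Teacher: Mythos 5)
Your argument is essentially the classical Krein--Rutman/Schaefer proof, and it is worth noting that the paper itself does not prove this proposition at all: it simply cites \cite[Appendix \S 3.3.3]{schaefer1971graduate}. So there is no ``paper proof'' to diverge from; your write-up is a self-contained substitute, and its main lines are sound. The Key Step correctly exploits the paper's Definition~\ref{defi:irred} together with Lemma~\ref{lemm:quasi} (the series collapses to $\frac{\lambda}{z-\lambda}u$ because $0<\lambda\leq\rho(T)<z$), the duality pairing against the positive eigenmeasure $\mu$ of $T'$ settles (i), the extremal-$t$ argument gives geometric simplicity, and the orthogonality computation kills a pole of order $\geq 2$.

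The one point you should tighten is the existence of the strictly positive eigenfunction $u_0$ of $T$ itself at $\rho(T)$, on which all of (ii) rests. The unnumbered Krein--Rutman proposition in the appendix only produces an eigen\emph{measure} of $T'$, and your two suggested sources for $u_0$ are respectively vague (``general theory'') and incorrect as stated: $P(T)=a_{-1}$ maps into the generalized eigenspace $\ker((\rho(T)I-T)^m)$, so for $m\geq 2$ --- precisely the case you are trying to exclude --- $P(T)u$ need not be an eigenfunction, and $a_{-1}$ is not obviously positive. The correct object is the leading Laurent coefficient $a_{-m}=\lim_{z\downarrow\rho(T)}(z-\rho(T))^{m}R_T(z)$: it is positive because $R_T(z)=\sum_{n\geq 0}z^{-n-1}T^{n}\geq 0$ for $z>\rho(T)$, it is nonzero by the definition of the pole order, and it satisfies $(T-\rho(T)I)a_{-m}=a_{-m-1}=0$, so its range lies in $\ker(\rho(T)I-T)$. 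Picking $u\in C^{+}(\cS)$ with $a_{-m}u\neq 0$ (possible by decomposing any $f$ with $a_{-m}f\neq0$ into positive and negative parts) yields a non-negative eigenfunction, which your Key Step then promotes to a strictly positive one; note that the Key Step also forces $\rho(T)>0$ here. With that substitution the proof is complete.
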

We give a lemma which characterises the quasi interior points (see \cite{schaefer1971graduate} for the definition) of $C^{+}(\cS)$. 
\begin{lemm}\label{lemm:quasi}
Let $f\in C(\cS)$. Then $f$ is quasi interior to $C^{+}(\cS)$ if and only if $f(x)>0$ for all $x\in\cS$.
\end{lemm}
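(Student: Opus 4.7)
The plan is to argue via the standard duality characterisation of quasi-interior points in $C(\cS)$, exploiting the compactness of $\cS$. Recall that in Schaefer's terminology, $f \in C^{+}(\cS)$ is quasi-interior to the positive cone if and only if the principal order ideal it generates, namely $\bigcup_{n \geq 1} \{g \in C(\cS) : |g| \leq n f\}$, is dense in $(C(\cS), \Vert \cdot\Vert_{\infty})$; equivalently (by the Hahn--Banach separation of a dense subspace from non-zero continuous linear functionals), $f$ is quasi-interior if and only if $\langle \mu, f\rangle > 0$ for every non-zero $\mu \in \mathcal{M}^{+}(\cS)$.

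For the easy direction (``only if''), I would argue by contraposition. Suppose there exists $x_0 \in \cS$ such that $f(x_0) = 0$. The Dirac mass $\delta_{x_0}$ is a non-zero element of $\mathcal{M}^{+}(\cS)$, and $\langle \delta_{x_0}, f\rangle = f(x_0) = 0$. Hence $f$ cannot be quasi-interior. Alternatively, sticking with the ideal formulation: every $g$ with $|g| \leq n f$ satisfies $g(x_0) = 0$, so every uniform limit of such functions vanishes at $x_0$, whence the ideal generated by $f$ is contained in $\{g \in C(\cS) : g(x_0) = 0\} \neq C(\cS)$.

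For the ``if'' direction, I would use compactness of $\cS$ in a crucial way. Assume $f(x) > 0$ for all $x \in \cS$. Since $f$ is continuous on the compact set $\cS$, it attains its infimum, so there exists $\epsilon > 0$ with $f(x) \geq \epsilon$ on $\cS$. Then for any non-zero $\mu \in \mathcal{M}^{+}(\cS)$ one has
\begin{equation*}
\langle \mu, f\rangle \geq \epsilon \, \mu(\cS) > 0,
\end{equation*}
which gives the duality criterion. Equivalently, for any $g \in C(\cS)$ one has $|g(x)| \leq \Vert g \Vert_{\infty} \leq (\Vert g \Vert_{\infty}/\epsilon) f(x)$ on $\cS$, so that $g$ lies in the principal order ideal of $f$; this ideal therefore coincides with $C(\cS)$ and is in particular dense.

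There is no real obstacle; the only subtlety is making sure to invoke the correct definition of ``quasi-interior'' (the one used in Schaefer, which the paper's Definition~\ref{defi:irred} refers to) and to use compactness of $\cS$ to pass from pointwise positivity to a uniform lower bound $f \geq \epsilon > 0$ (without compactness the ``if'' direction fails, as one sees in $C_0(\R)$ for instance).
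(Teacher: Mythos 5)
Your proof is correct and complete. Note that the paper itself states Lemma \ref{lemm:quasi} in the appendix without any proof, simply referring to \cite{schaefer1971graduate} for the notion of quasi-interior point, so there is no argument of the author's to compare against; your route is the standard one. Both halves are sound: the ``only if'' direction via the Dirac mass $\delta_{x_0}$ (or, equivalently, via the observation that the principal ideal generated by $f$ lies in the proper closed subspace $\lbrace g : g(x_0)=0\rbrace$) is exactly right, and the ``if'' direction correctly isolates the essential point, namely that compactness of $\cS$ upgrades pointwise positivity to a uniform bound $f\geq \epsilon>0$, after which either the duality criterion $\langle\mu,f\rangle\geq\epsilon\,\mu(\cS)>0$ or the direct inclusion $|g|\leq(\Vert g\Vert_{\infty}/\epsilon)f$ finishes the argument. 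Your closing remark that the equivalence fails without compactness (e.g.\ in $C_0(\R)$) is a worthwhile sanity check, and it is consistent with the paper's standing assumption (\ref{hyp:S}) that $\cS$ is compact.
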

We give now a result on the spectrum of the multiplication operator.
\begin{lemm}\label{lemm:spectremult}
Let $r\in C(\cS)$ be a positive function. Let us denote by $r$ the endomorphism of $C(\cS)$ defined by $rf(x)=r(x)f(x)$. Then we have $\sigma(r)=\sigma_e(r)=\lbrace r(x):x\in\cS\rbrace$.
\end{lemm}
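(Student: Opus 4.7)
The proof splits into showing $\sigma(r) = r(\cS)$ and then $\sigma_e(r) = r(\cS)$. For the spectrum, if $\lambda = r(x_0)$ for some $x_0 \in \cS$, every element of the range of $r - \lambda I$ vanishes at $x_0$, so $r - \lambda I$ cannot be surjective and $\lambda \in \sigma(r)$; conversely, if $\lambda \notin r(\cS)$ then compactness of $r(\cS)$ gives $\delta := \inf_{x \in \cS}|r(x) - \lambda| > 0$, so the function $1/(r - \lambda)$ belongs to $C(\cS)$ and multiplication by it is a bounded inverse to $r - \lambda I$ in $\textbf{B}(C(\cS))$.

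The inclusion $\sigma_e(r) \subseteq \sigma(r) = r(\cS)$ is immediate from the definition. For the reverse inclusion I would fix $\lambda = r(x_0) \in r(\cS)$ and split according to the topological interior of $F := r^{-1}(\{\lambda\})$ in $\cS$. If $F$ has empty interior, then every neighbourhood of $x_0$ contains a point $y$ with $r(y) \neq \lambda$; continuity of $r$ then yields a sequence $y_n \to x_0$ with $r(y_n) \in r(\cS) \setminus \{\lambda\}$ and $r(y_n) \to \lambda$, showing that $\lambda$ is not an isolated point of $\sigma(r) = r(\cS)$. This satisfies condition~(2) in the definition of $\sigma_e$, so $\lambda \in \sigma_e(r)$. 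If instead $F$ has non-empty interior containing an open ball $U \subseteq \cS$, then any $f \in C(\cS)$ supported in $\overline{U} \subseteq F$ satisfies $(r - \lambda I) f \equiv 0$; under the standing regularity assumption~(\ref{hyp:S}) the set $\cS$ has no isolated points, hence $U$ contains infinitely many pairwise disjoint open balls $B_k$, and the associated bump functions $\phi_k \in C(\cS)$ with $\mathrm{supp}(\phi_k) \subseteq \overline{B_k}$ form a linearly independent family in $\ker(r - \lambda I)$, so this kernel is infinite dimensional and condition~(3) in the definition of $\sigma_e$ is satisfied.

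The main obstacle is the second case above, namely exhibiting an explicit infinite linearly independent family in $\ker(r - \lambda I)$ when $F$ has non-empty interior. This requires that $\cS$ be regular enough for every non-empty open subset to contain infinitely many pairwise disjoint open sub-balls, which holds under assumption~(\ref{hyp:S}) but would fail, for example, for a singleton or a finite discrete $\cS$; since the lemma is only invoked under this standing assumption in the main text, this is not an issue in practice.
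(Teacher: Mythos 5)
Your proof is correct, and while the first half (computing $\sigma(r)=r(\cS)$ via the explicit multiplication inverse for $\lambda\notin r(\cS)$ and non-invertibility at points of the range) is essentially identical to the paper's, your treatment of $\sigma_e(r)=\sigma(r)$ takes a genuinely different route. The paper disposes of this in one line: since $r$ is continuous (and $\cS$ is connected and compact, by (\ref{hyp:S})), $r(\cS)$ is a closed interval, hence has no isolated points, so every spectral point falls under condition~(2) of the definition of $\sigma_e$. You instead split on the interior of the level set $r^{-1}(\{\lambda\})$: condition~(2) when the interior is empty (forcing nearby distinct spectral values), and condition~(3) via an infinite family of disjointly supported bump functions in $\ker(r-\lambda I)$ when it is not. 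Your version is longer but more robust: it does not use connectedness of $\cS$ at all, only that $\cS$ has no isolated points, and it covers the degenerate case of constant $r$, where $r(\cS)$ is a singleton and the paper's ``no isolated point'' claim is literally false (the conclusion there being rescued only by the infinite-dimensional kernel, which is exactly your condition~(3) branch). The paper's argument buys brevity under its standing assumptions; yours buys generality and closes a small gap in the degenerate case.
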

\begin{proof}
We start by proving that $\sigma(r)=\lbrace r(x):x\in\cS\rbrace=:r(\cS)$. Let $y\in r(\cS)^c$. It is straightforward to verify that the operator $f\in C(\cS)\longmapsto \frac{1}{y - r(x)}f$ is the inverse of $(yI - r)$ and is bounded. We deduce that $y\in \mathcal{R}(r)$ and $\sigma(r)\subset r(\cS)$. Conversely, let $y\in \mathcal{R}(r)$ and assume $y=r(x)$ with $x\in\cS$. Let $u:=(yI-r)^{-1}$ and $f\in\cS$ such that $f(x)\neq 0$. We have $f(x)=u(yI-r)f(x)=0$ which is absurd and then $r(\cS)\subset \sigma(r)$. The function $r$ being continuous, the set $\sigma(r)$ has no isolated point and we deduce that $\sigma(r)=\sigma_e(r)$.
\end{proof}

\end{document}